\theoremstyle{definition}
\newtheorem{theorem}{Theorem}[section]
\newtheorem{lemma}[theorem]{Lemma}
\newtheorem{corollary}[theorem]{Corollary}
\newtheorem{conjecture}[theorem]{Conjecture}
\newtheorem{definition}[theorem]{Definition}
\newtheorem{example}[theorem]{Example}
\theoremstyle{remark}
\newtheorem{remark}[theorem]{Remark}
\numberwithin{equation}{section}
\begin{document}

\title{Linear relations on star coefficients of the chromatic symmetric function}

\author[Orellana]{Rosa Orellana}
\address[R. Orellana]{Mathematics Department, Dartmouth College,
Hanover, NH 03755, U.S.A.}
\email{Rosa.C.Orellana@dartmouth.edu}
\urladdr{\href{https://math.dartmouth.edu/~orellana}{https://math.dartmouth.edu/~orellana}}
\author[Tom]{Foster Tom}
\address[F. Tom]{Mathematics Department, Dartmouth College,
Hanover, NH 03755, U.S.A.}
\email{Foster.Tom@dartmouth.edu}
\urladdr{\href{https://math.dartmouth.edu/~ftom/}{https://math.dartmouth.edu/~ftom}}
\subjclass[2020]{Primary 05C05; Secondary 05E05}
\keywords{}


\begin{abstract}
We prove that the coefficient of the star $\mathfrak{st}_{21^{n-2}}$ in the chromatic symmetric function $X_G$ determines whether a connected graph $G$ is $2$-connected. We also prove new linear relations on other star coefficients of chromatic symmetric functions. This allows us to find new bases for certain spans of chromatic symmetric functions. Finally, we relate the coefficient of the star $\mathfrak{st}_n$ to acyclic orientations.
\end{abstract}

\maketitle

\section{Introduction}\label{section:introduction}

In 1995, Stanley \cite{chromsym} introduced the \emph{chromatic symmetric function} $X_G(\bm x)$ defined for any simple graph $G=(V,E)$ and set of commuting variables ${\bm x}=(x_1, x_2, \ldots$). The chromatic symmetric function is a generalization of the \emph{chromatic polynomial} $\chi_G(t)$, which can be recovered by setting $\chi_G(t)=X_G(\underbrace{1,\ldots,1}_{t\text{ ones}},0,\ldots)$. Although the chromatic polynomial $\chi_G(t)$ can tell us whether $G$ is connected, $2$-connected \cite{cutchrom}, or whether $G$ is a tree, it is completely unable to distinguish trees because $\chi_T(t)=t(t-1)^{n-1}$ for all $n$-vertex trees $T$. By contrast, Stanley observed that trees can have different chromatic symmetric functions, which has led to the following conjecture: 

\begin{conjecture} \cite{chromsym} If $T_1$ and $T_2$ are non-isomorphic trees, then $X_{T_1}(\bm x)\neq X_{T_2}(\bm x)$.
\end{conjecture}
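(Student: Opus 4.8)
The plan is to pursue a \emph{reconstruction} strategy: to extract from the coefficients of $X_T(\bm x)$, in a well-chosen basis, enough isomorphism invariants to pin down $T$ uniquely. I should flag at the outset that this is Stanley's tree-isomorphism conjecture, which has remained open since 1995; what follows is the natural line of attack together with the precise point at which it stalls, rather than a claim to resolve it.

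First I would pass to the power-sum basis via Stanley's expansion
\[
X_G(\bm x)=\sum_{S\subseteq E}(-1)^{|S|}\,p_{\lambda(S)}(\bm x),
\]
where $\lambda(S)$ is the partition of $V$ recording the sizes of the connected components of the spanning subgraph $(V,S)$. For a tree, every $S\subseteq E$ spans a forest, so the coefficient of each $p_\lambda$ is, up to sign, a signed count of spanning forests of $T$ with prescribed component sizes. In this way $X_T$ literally packages the full subforest statistics of $T$, and from them one can read off the standard recoverable invariants (following Martin--Morin--Wagner): the number of vertices and edges, the degree sequence, the number of $i$-edge paths for each $i$, the number of subtrees of each size, and so on. The conjecture is already known for several classes assembled this way, such as spiders and caterpillars, which makes reconstruction the natural template.

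Second I would try to bootstrap these coarse invariants into a complete reconstruction, for instance by locating a canonical vertex (a centroid) and inducting on the branches hanging off it, at each stage using finer subforest counts to recover how the branches attach. This is exactly where the star-coefficient technology of the present paper might help: since the coefficient of $\mathfrak{st}_{21^{n-2}}$ detects $2$-connectivity and the remaining star coefficients obey linear relations, the star expansion of $X_T$ plausibly encodes local branching data vertex by vertex, which one would want to feed into the induction as a source of information not visible in the global path or subtree counts.

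The main obstacle --- and the reason the conjecture is still open --- is \emph{completeness}. No known finite list of basis coefficients is provably sufficient to determine a tree: one can build non-isomorphic trees agreeing on very large families of subtree and path statistics, so the gluing step in any leaf-peeling or centroid induction threatens to lose precisely the information needed to separate two candidates. A genuine proof would therefore require either a new invariant extracted from $X_T$ that is not captured by the known subforest statistics, or a global argument that reconstructs $T$ all at once rather than vertex by vertex. I do not expect the tools assembled above to close this gap on their own, and I would treat this step as the true crux.
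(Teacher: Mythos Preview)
The statement is stated in the paper as an open \emph{conjecture}, not as a theorem: the paper does not prove it, nor does it claim to. It merely records the conjecture, cites computational verification up to $29$ vertices, and lists the special classes (caterpillars, spiders, small-diameter trees) for which it is known. You correctly flag this at the outset and explicitly do not claim a resolution, so there is no proof in the paper to compare your proposal against. Your outline of the reconstruction strategy and its known obstructions is a reasonable summary of the state of the art, but it is commentary rather than a proof attempt, and that is the appropriate response here.
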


The Tree Isomorphism Conjecture has been checked for all trees up to 29 vertices \cite{algochromtrees} and for the classes of caterpillars \cite{catchromiso}, spiders \cite{distinguishingtrees}, and trees with diameter at most 5 \cite{chromstar}. The chromatic symmetric function of a tree determines its subtree polynomial \cite{distinguishingtrees} and its generalized degree polynomial \cite{chromtreepolyinv, vertweightgen}, which in turn determines information such as its double-degree sequence and leaf adjacency sequence \cite{gendegpol}. For general graphs, the chromatic symmetric function encodes information about acyclic orientations \cite{chromsym} and connected partitions \cite{conparts}. \\

In this paper, we use the deletion-near-contraction relation of Aliste-Prieto, de Mier, Orellana, and Zamora \cite{markedgraphs} to study the expansion $X_G=\sum_\lambda c_\lambda\mathfrak{st}_\lambda$ in the star basis for any simple connected graph. In Section \ref{sec:2con}, we show how a single star coefficient characterizes whether $G$ is $2$-connected. 
\begin{theorem}
Let $G$ be a connected graph with $n\geq 3$ vertices and let $X_G=\sum_\lambda c_\lambda\mathfrak{st}_\lambda$. Then the coefficient $c_{21^{n-2}}$ is nonzero if and only if $G$ is $2$-connected.
\end{theorem}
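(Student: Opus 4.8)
The plan is to sidestep the deletion-near-contraction machinery entirely and reduce the statement to a classical property of the chromatic polynomial. First I would apply the principal specialization $x_1=\cdots=x_t=1$, $x_{t+1}=\cdots=0$, under which $X_G$ becomes $\chi_G(t)$ and each power sum $p_k$ becomes $t$. Since the star on $k$ vertices is $K_{1,k-1}$ with $\chi_{K_{1,k-1}}(t)=t(t-1)^{k-1}$, the basis element $\mathfrak{st}_\lambda$, being the chromatic symmetric function of a disjoint union of such stars, specializes to $t^{\ell(\lambda)}(t-1)^{n-\ell(\lambda)}$, where $\ell(\lambda)$ denotes the number of parts. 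Applying the specialization to $X_G=\sum_\lambda c_\lambda\mathfrak{st}_\lambda$ therefore yields the identity
\[ \chi_G(t)=\sum_\lambda c_\lambda\, t^{\ell(\lambda)}(t-1)^{n-\ell(\lambda)}. \]

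Next I would extract $c_{21^{n-2}}$ from this identity by examining the behaviour at $t=1$. The key combinatorial observation is that $1^n$ and $21^{n-2}$ are the only partitions of $n$ with $n$ and $n-1$ parts, respectively, while every other $\lambda$ has $\ell(\lambda)\leq n-2$, so its summand $c_\lambda t^{\ell(\lambda)}(t-1)^{n-\ell(\lambda)}$ vanishes to order at least $2$ at $t=1$. Evaluating the identity and its derivative at $t=1$ thus gives $\chi_G(1)=c_{1^n}$ and $\chi_G'(1)=n\,c_{1^n}+c_{21^{n-2}}$. Because $G$ is connected with at least one edge we have $\chi_G(1)=0$, whence $c_{1^n}=0$ and
\[ c_{21^{n-2}}=\chi_G'(1). \]
The theorem is therefore equivalent to the statement that $\chi_G'(1)\neq 0$ if and only if $G$ is $2$-connected. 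Note that this reduction needs no control whatsoever over the remaining, generally complicated, coefficients $c_\lambda$.

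For the forward implication I would argue directly: if $v$ is a cut vertex, write $G$ as the union of two connected subgraphs $G_1,G_2$, each with at least two vertices, meeting only at $v$. The standard factorization $\chi_G(t)=\chi_{G_1}(t)\chi_{G_2}(t)/t$ then shows that $(t-1)^2$ divides $\chi_G(t)$, since $\chi_{G_1}(1)=\chi_{G_2}(1)=0$ and dividing by $t$ does not affect the root at $1$; hence $\chi_G'(1)=0$ and $c_{21^{n-2}}=0$. The converse---that $2$-connectivity forces $t=1$ to be a \emph{simple} root of $\chi_G(t)$---is the step I expect to be the main obstacle, and it is where the known interaction between the chromatic polynomial and $2$-connectivity \cite{cutchrom} enters: the multiplicity of $1$ as a root of $\chi_G(t)$ equals the number of blocks of $G$, so a connected graph on $n\geq 3$ vertices has $t=1$ as a simple root precisely when it consists of a single block, i.e. is $2$-connected. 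Iterating the cut-vertex factorization reduces the hard direction to the fact that a single $2$-connected block $B$ satisfies $\chi_B'(1)\neq 0$, which I would establish or cite via the block decomposition of the chromatic polynomial.

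Finally, I would sanity-check the formula $c_{21^{n-2}}=\chi_G'(1)$ on small cases: for $K_3$ one computes $\chi'(1)=-1$ and for $C_4$ one computes $\chi'(1)=1$, while the paths $P_3$ and $P_4$, which have cut vertices, give $\chi'(1)=0$; these agree with the values $c_{21^{n-2}}=-1,\,1,\,0,\,0$ obtained by expanding directly in the star basis, confirming both the specialization step and the connectivity criterion.
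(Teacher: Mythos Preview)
Your proof is correct and takes a genuinely different route from the paper's. The paper argues entirely inside the DNC framework: it first proves a graph-theoretic lemma (for a $2$-connected $G$ with at least four vertices and any edge $e$, at least one of $G\setminus e$ or $G/e$ remains $2$-connected), and then uses this, by induction, to exhibit a path in a DNC tree from $G$ down to $\text{St}_{21^{n-2}}$ whenever $G$ is $2$-connected, while a separate induction on $n$ and on internal edges shows no such path can exist otherwise. Your approach bypasses DNC completely: the principal specialization collapses each $\mathfrak{st}_\lambda$ to $t^{\ell(\lambda)}(t-1)^{n-\ell(\lambda)}$, and comparing first-order Taylor coefficients at $t=1$ yields the exact identity $c_{21^{n-2}}=\chi_G'(1)$, after which the classical block--multiplicity theorem for the chromatic polynomial finishes the job. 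What your argument buys is brevity and a sharper conclusion (an explicit value for $c_{21^{n-2}}$, not merely its nonvanishing); what the paper's argument buys is self-containment---it does not import the block--multiplicity result---together with the structural Lemma~\ref{lem:2con}, and it showcases the DNC machinery on which the rest of the paper relies. It is worth noting that the paper records essentially your identity in a remark \emph{after} its proof, presenting the link to $\chi_G'(1)$ as a curious consequence rather than as an alternative argument; your write-up makes clear it can serve as the proof itself.
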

This theorem is especially intriguing because we have an infinite family of examples demonstrating that $k$-connectivity for $k\geq 3$ cannot be determined from $X_G$.

In Section \ref{sec:linear}, we prove new linear relations on other star coefficients $c_\lambda$. For instance, we find formulas for star coefficients corresponding to partitions of the same length.  These formulas were used in \cite{chromunicyclic} to find the sizes of cycles in bicyclic graphs.  In general, the sums studied in Section \ref{sec:linear} are useful to uncover structural properties of $G$. In this paper, these sums allow us to find new bases for the vector spaces
\begin{equation*}
\mathcal C_n=\text{span}_{\mathbb Q}\{X_G: \ G\text{ connected }n\text{-vertex graph}\}\text{ and }\mathcal T_n=\text{span}_{\mathbb Q}\{X_T: \ T \ n\text{-vertex tree}\}.
\end{equation*}
We also find a linear algebraic significance of the star graph $\text{St}_n$.

\begin{theorem}
For $n\geq 4$, the star graph $\text{St}_n$ is the unique connected $n$-vertex graph that is not a linear combination of other connected $n$-vertex graphs. 
\end{theorem}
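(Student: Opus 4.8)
The plan is to read the statement as asserting that, among the vectors $\{X_G : G \text{ connected on } n \text{ vertices}\}$ expressed in the star basis, $X_{\text{St}_n}$ is the unique \emph{coloop}---the unique one lying in no linear dependence with the others. The starting observation is that the chromatic symmetric function of a disjoint union of stars factors as the product of the chromatic symmetric functions of its components, so each $\mathfrak{st}_\lambda$ is realized by a forest of stars and $\text{St}_n$, being the only \emph{connected} forest of stars, satisfies $X_{\text{St}_n} = \mathfrak{st}_n$, a single basis element. The theorem then breaks into two tasks: (a) $X_{\text{St}_n}$ is not a linear combination of the other connected $X_G$; and (b) every connected $G \neq \text{St}_n$ \emph{is} such a combination.

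The combinatorial engine for both parts is the characterization: for $n \ge 4$, the star is the unique connected $n$-vertex graph with no two independent edges. Indeed, if all edges pairwise meet then they form a star or a triangle; a triangle spans only three vertices, so connectivity on $n \ge 4$ vertices forces another edge and hence an independent pair. This is also why the hypothesis $n \ge 4$ is necessary, since for $n = 3$ both $\text{St}_3$ and $K_3$ lack independent edges. For part (a) I would use the coefficient formulas of Section~\ref{sec:linear} to construct an explicit linear functional $\phi$ on degree-$n$ symmetric functions with $\phi(X_{\text{St}_n}) \neq 0$ that I can prove vanishes on every connected graph containing two independent edges, hence on every connected non-star graph. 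Such a $\phi$ certifies that $X_{\text{St}_n}$ escapes the span of the others; exhibiting one is in fact equivalent to (a), so the real content is to write $\phi$ down from the coefficient formulas and verify its vanishing.

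For part (b) I would turn to the deletion-near-contraction relation of \cite{markedgraphs} together with the same-length relations of Section~\ref{sec:linear} to produce, for each connected $G \neq \text{St}_n$, a linear dependence among chromatic symmetric functions of connected $n$-vertex graphs in which $X_G$ appears with nonzero coefficient. Concretely, because such a $G$ has two independent edges, I would choose an edge $e$ whose deletion-near-contraction expansion rewrites $X_G$ through $X_{G \setminus e}$ and the near-contractions of $G$ at $e$, then reabsorb any disconnected terms as products of smaller chromatic symmetric functions lying in the connected-graph span, arranging that every graph that appears is connected, $n$-vertex, and distinct from $G$. This places $X_G$ inside the span of the remaining connected graphs and shows it is not a coloop.

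The main obstacle I anticipate is part (b): the deletion step can disconnect $G$, so the real difficulty is to keep every term inside $\mathcal C_n$ \emph{uniformly} over all non-star $G$ rather than for isolated families. I expect to need an induction on the number of edges, feeding disconnected or smaller terms back through the same-length identities and using the $2$-connectivity criterion of Section~\ref{sec:2con} to track the $\mathfrak{st}_{21^{n-2}}$ coefficient. A secondary difficulty, in part (a), is verifying that the single functional $\phi$ really does vanish across the entire infinite family of connected non-star graphs; the no-independent-edges characterization is what makes one uniform $\phi$ possible, but confirming its vanishing should come down to the precise combinatorial formula for the star coefficients it combines.
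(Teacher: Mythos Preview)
Your plan for part (a) is essentially the paper's: the functional is $h(G)=2c_{21^{n-2}}+c_{31^{n-3}}+\cdots+c_n$, and one proves $h(G)=0$ for every connected $G\neq\text{St}_n$ by DNC induction (Lemma~\ref{lem:hookrelation}). Note, however, that the ``two independent edges'' characterization plays no role in that verification---the vanishing is established edge-by-edge via the DNC recursion, not by detecting matchings from $X_G$. So while your intuition is harmless, it does not supply the mechanism.

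Part (b) has a genuine gap. A single DNC step on an internal edge $e$ gives
\[
X_G=X_{G\setminus e}-X_{(G/e)^\circ}+X_{(G/e)^\multimap},
\]
and $(G/e)^\circ$ is \emph{always} disconnected while $G\setminus e$ may be; so ``reabsorbing'' these into $\mathcal C_n$ is not a cleanup step but the entire problem. Even granting that each disconnected term lies in $\mathcal C_n$, when you expand it in connected graphs you have no control over whether $X_G$ itself reappears with coefficient exactly $1$, in which case the identity collapses. Your proposed induction on the number of edges does not break this circularity, because the disconnected terms are not connected graphs to which the hypothesis applies.

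The paper avoids this entirely by a basis argument: it first exhibits an explicit basis $B\cup B'$ of $\mathcal C_n$ consisting of caterpillars and cuttlefish (Corollary~\ref{cor:catcutbasis}), so any $G$ outside the basis is automatically a combination of other connected graphs. It then handles each basis element $\neq\text{St}_n$ by hand---cuttlefish via ad hoc DNC identities (Lemma~\ref{lem:cutrelations}), and caterpillars via leading-partition swaps (Theorem~\ref{thm:treecoloops}) together with a lengthy case analysis for the exceptional shapes $ab1^k$ and $aaa$ (Lemma~\ref{lem:ab1k}), plus a computer check for $n\le 7$. The work is precisely in producing, for each basis graph, a \emph{specific} connected replacement; there is no uniform one-step DNC rewriting of the kind you sketch.
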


Finally, in Section \ref{sec:coefn}, we relate the coefficient $c_n$ to acyclic orientations of $G$. As an application, we characterize the families of connected graphs $\mathcal F$ for which every chromatic symmetric function $X_G$ can be written as an integer linear combination in the corresponding chromatic basis $B_{\mathcal F}$ of Cho and van Willigenburg \cite{chrombases}.

\section{Background}

Let $G=(V,E)$ be a simple graph with $n$ vertices, this means that $G$ has no multiple edges or loops. A \emph{proper colouring} of $G$ is a function $c:V\to\{1,2,3,\ldots\}$ such that $c(u)\neq c(v)$ whenever $\{u,v\}\in E$. The \emph{chromatic symmetric function} of $G$ is the formal power series in variables $\bm x=(x_1,x_2,x_3,\ldots)$ given by
\begin{equation*}
X_G=\sum_{c\text{ proper }}\prod_{v\in V}x_{\kappa(v)}.
\end{equation*}
The space $\Lambda^n$ of symmetric functions of degree $n$ has several bases indexed by \emph{integer partitions} of $n$, which are decreasing sequences of positive integers $\lambda=(\lambda_1,\ldots,\lambda_\ell)$ with $\lambda_1+\cdots+\lambda_\ell=n$. Some standard bases are the \emph{monomial symmetric functions} $m_\lambda$, which is the sum of all monomials $x_{i_1}^{t_1}\cdots x_{i_\ell}^{t_\ell}$, where the $t_i$ are the parts of $\lambda$ in some order; and the \emph{elementary symmetric functions} $e_\lambda$ defined by 
\begin{equation*}
e_\lambda=e_{\lambda_1}\cdots e_{\lambda_\ell},\text{ where }e_k=\sum_{i_1<\cdots<i_k}x_{i_1}\cdots x_{i_k}.
\end{equation*}
We will be particularly interested in the \emph{star basis}, defined as follows. For an integer $k$, the \emph{star graph} $\text{St}_k$ is the $k$-vertex tree where $(k-1)$ leaves are joined to a single central vertex. For an integer partition $\lambda$ of $n$ (denoted $\lambda\vdash n$), the \emph{star forest} $\text{St}_\lambda$ is the disjoint union of the stars $\text{St}_{\lambda_1},\ldots,\text{St}_{\lambda_\ell}$. Let $\mathfrak{st}_\lambda=X_{\text{St}_\lambda}$. Now the \emph{star basis} of $\Lambda^n$ is $\{\mathfrak{st}_\lambda: \ \lambda\vdash n\}$. \\

Aliste-Prieto, de Mier, Orellana, and Zamora \cite{markedgraphs} found an efficient way to calculate the star expansion of $X_G$ using a linear relation called the \emph{deletion-near-contraction (DNC) relation}. Given an edge $e=\{u,v\}\in E$ of $G$, we define the following graphs. \begin{itemize}
\item The \emph{deletion} $G\setminus e$ is given by deleting the edge $e$.
\item The \emph{contraction} $G/e$ is given by replacing the vertices $u$ and $v$ by a single vertex $uv$ and joining it to all vertices that are neighbours of $u$ or $v$.
\item The \emph{dot-contraction} $(G/e)^\circ$ is given by adding an isolated vertex $\circ$ to $G/e$.
\item The \emph{leaf-contraction} $(G/e)^\multimap$ is given by taking the dot-contraction $(G/e)^\circ$ and adding an edge joining the contracted vertex $uv$ to the isolated vertex $\circ$.
\end{itemize}

When doing contractions on graphs with cycles, multiple edges might arise after contracting an edge. We will replace multiple edges with a single edge. 

Now we have the following linear relation.

\begin{theorem}
\cite[Proposition 3.1]{markedgraphs} Let $e$ be an edge of $G$. Then $X_G$ satisfies the relation
\begin{equation} \label{eq:dncrelation}
X_G=X_{G\setminus e}-X_{(G/e)^\circ}+X_{(G/e)^\multimap}.
\end{equation}
\end{theorem}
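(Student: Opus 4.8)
The plan is to prove the identity directly at the level of proper colourings, starting from the definition $X_G=\sum_{\kappa}\prod_{v\in V}x_{\kappa(v)}$ and tracking the weight contributed by the two endpoints of the chosen edge. Write $e=\{u,v\}$. First I would partition the proper colourings of the deletion $G\setminus e$ into those with $\kappa(u)\neq\kappa(v)$ and those with $\kappa(u)=\kappa(v)$. The first group is exactly the set of proper colourings of $G$, since the only constraint $G$ imposes beyond $G\setminus e$ is $\kappa(u)\neq\kappa(v)$, so these contribute $X_G$. Denoting the generating function of the second group by $Y$, this yields $X_{G\setminus e}=X_G+Y$, and it remains to show $Y=X_{(G/e)^\circ}-X_{(G/e)^\multimap}$.

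Next I would identify $Y$ concretely. A proper colouring of $G\setminus e$ with $\kappa(u)=\kappa(v)$ corresponds bijectively to a proper colouring of the contraction $G/e$: assign to the merged vertex $uv$ the common colour, noting that the adjacencies of $uv$ in $G/e$ are precisely the constraints inherited from $u$ and $v$. Here I would remark that since proper colourings depend only on which pairs of vertices are adjacent, collapsing the multiple edges that may appear after contraction to single edges changes nothing, so $X_{G/e}$ is well defined. Under this bijection the monomial weights differ by a single factor: in $G\setminus e$ both $u$ and $v$ are coloured, contributing $x_{\kappa(u)}^2$, whereas in $G/e$ the vertex $uv$ contributes only $x_{\kappa(uv)}$. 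Hence $Y=\sum_{\kappa}x_{\kappa(uv)}\prod_{w\in V(G/e)}x_{\kappa(w)}$, summed over proper colourings $\kappa$ of $G/e$; that is, a contraction weighted by an extra factor $x_{\kappa(uv)}$ at the merged vertex.

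The key step is then to realize this extra factor through the two auxiliary graphs. The dot-contraction $(G/e)^\circ$ appends an isolated, hence unconstrained, vertex $\circ$, so $X_{(G/e)^\circ}=\sum_{\kappa}\big(\sum_{j\geq 1}x_j\big)\prod_{w}x_{\kappa(w)}$, again summed over proper colourings of $G/e$. The leaf-contraction $(G/e)^\multimap$ instead joins $\circ$ to $uv$, forcing $\kappa(\circ)\neq\kappa(uv)$, so $X_{(G/e)^\multimap}=\sum_{\kappa}\big(\sum_{j\neq\kappa(uv)}x_j\big)\prod_{w}x_{\kappa(w)}$. Subtracting, the inner colour sums telescope to the single surviving term $x_{\kappa(uv)}$, giving $X_{(G/e)^\circ}-X_{(G/e)^\multimap}=Y$. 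Combining this with $X_{G\setminus e}=X_G+Y$ and rearranging produces \eqref{eq:dncrelation}.

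The main obstacle is not any single hard computation but the bookkeeping of the weight $x_{\kappa(uv)}^2$ versus $x_{\kappa(uv)}$ at the contracted vertex. The ordinary deletion-contraction relation $\chi_G=\chi_{G\setminus e}-\chi_{G/e}$ does not lift directly to $X_G$ precisely because of this surplus factor, and the entire purpose of introducing $(G/e)^\circ$ and $(G/e)^\multimap$ is to absorb it. I would therefore verify the weight discrepancy with care and confirm that the multiple-edge convention leaves the colouring bijections undisturbed.
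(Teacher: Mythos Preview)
Your argument is correct. The paper itself does not prove this relation; it merely cites it as \cite[Proposition 3.1]{markedgraphs} and proceeds to use it, so there is no in-paper proof to compare against. Your direct colouring argument---splitting proper colourings of $G\setminus e$ by whether $\kappa(u)=\kappa(v)$, identifying the equal-colour piece with a weighted sum over $G/e$, and then realizing the extra factor $x_{\kappa(uv)}$ as the difference $X_{(G/e)^\circ}-X_{(G/e)^\multimap}$---is exactly the standard way to establish this identity, and your handling of the multiple-edge convention is also correct since proper colourings only see the underlying simple graph.
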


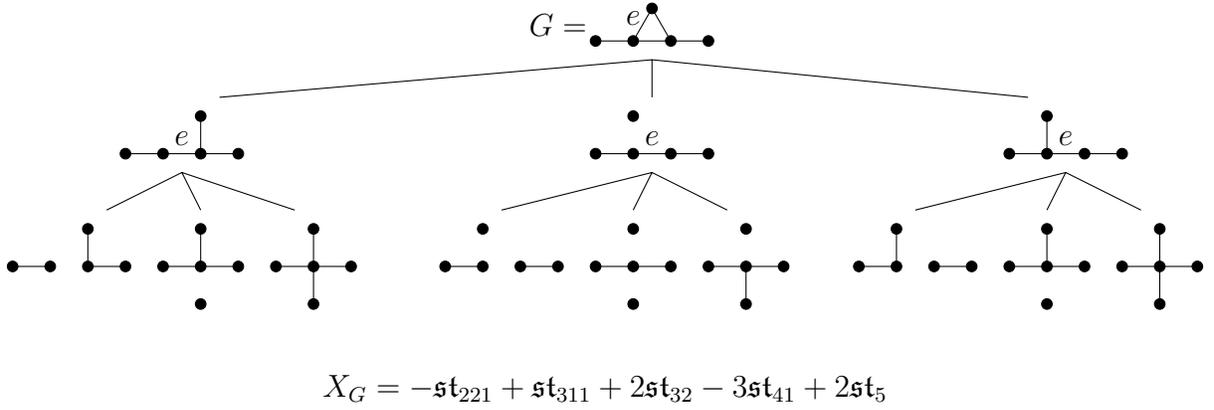
\begin{figure}
\begin{tikzpicture}
\node at (-0.5,0.216) () {$G=$};
\filldraw (0,0) circle (2pt) node[align=center,above] (){};
\filldraw (0.5,0) circle (2pt) node[align=center,above] (){};
\filldraw (0.75,0.433) circle (2pt) node[align=center,above] (){};
\filldraw (1,0) circle (2pt) node[align=center,above] (){};
\filldraw (1.5,0) circle (2pt) node[align=center,above] (){};
\draw (0,0)--(1.5,0) (0.5,0)--(0.75,0.433)--(1,0);
\draw (0.75,-0.25)--(-5,-0.75) (0.75,-0.25)--(0.75,-0.75) (0.75,-0.25)--(5.75,-0.75);
\node at (0.5,0.3) {$e$};

\filldraw (-6.25,-1.5) circle (2pt) node[align=center,above] (){};
\filldraw (-5.75,-1.5) circle (2pt) node[align=center,above] (){};
\filldraw (-5.25,-1) circle (2pt) node[align=center,above] (){};
\filldraw (-5.25,-1.5) circle (2pt) node[align=center,above] (){};
\filldraw (-4.75,-1.5) circle (2pt) node[align=center,above] (){};
\draw (-6.25,-1.5)--(-4.75,-1.5) (-5.25,-1)--(-5.25,-1.5);
\draw (-5.5,-1.75)--(-6.5,-2.25) (-5.5,-1.75)--(-5.25,-2.25) (-5.5,-1.75)--(-4,-2.25);
\node at (-5.5,-1.3) {$e$};

\filldraw (0,-1.5) circle (2pt) node[align=center,above] (){};
\filldraw (0.5,-1.5) circle (2pt) node[align=center,above] (){};
\filldraw (1,-1.5) circle (2pt) node[align=center,above] (){};
\filldraw (1.5,-1.5) circle (2pt) node[align=center,above] (){};
\filldraw (0.5,-1) circle (2pt) node[align=center,above] (){};
\draw (0,-1.5)--(1.5,-1.5);
\draw (0.75,-1.75)--(-1.25,-2.25) (0.75,-1.75)--(0.5,-2.25) (0.75,-1.75)--(1.75,-2.25);
\node at (0.75,-1.3) {$e$};

\filldraw (5.5,-1.5) circle (2pt) node[align=center,above] (){};
\filldraw (6,-1.5) circle (2pt) node[align=center,above] (){};
\filldraw (6,-1) circle (2pt) node[align=center,above] (){};
\filldraw (6.5,-1.5) circle (2pt) node[align=center,above] (){};
\filldraw (7,-1.5) circle (2pt) node[align=center,above] (){};
\draw (5.5,-1.5)--(7,-1.5) (6,-1.5)--(6,-1);
\draw (6.25,-1.75)--(4.25,-2.25) (6.25,-1.75)--(6,-2.25) (6.25,-1.75)--(7.25,-2.25);
\node at (6.25,-1.3) {$e$};

\filldraw (-7.75,-3) circle (2pt) node[align=center,above] (){};
\filldraw (-7.25,-3) circle (2pt) node[align=center,above] (){};
\filldraw (-6.75,-2.5) circle (2pt) node[align=center,above] (){};
\filldraw (-6.75,-3) circle (2pt) node[align=center,above] (){};
\filldraw (-6.25,-3) circle (2pt) node[align=center,above] (){};
\draw (-7.75,-3)--(-7.25,-3) (-6.75,-2.5)--(-6.75,-3)--(-6.25,-3);

\filldraw (-5.75,-3) circle (2pt) node[align=center,above] (){};
\filldraw (-5.25,-3.5) circle (2pt) node[align=center,above] (){};
\filldraw (-5.25,-2.5) circle (2pt) node[align=center,above] (){};
\filldraw (-5.25,-3) circle (2pt) node[align=center,above] (){};
\filldraw (-4.75,-3) circle (2pt) node[align=center,above] (){};
\draw (-5.75,-3)--(-4.75,-3) (-5.25,-2.5)--(-5.25,-3);

\filldraw (-3.75,-3.5) circle (2pt) node[align=center,above] (){};
\filldraw (-4.25,-3) circle (2pt) node[align=center,above] (){};
\filldraw (-3.75,-2.5) circle (2pt) node[align=center,above] (){};
\filldraw (-3.75,-3) circle (2pt) node[align=center,above] (){};
\filldraw (-3.25,-3) circle (2pt) node[align=center,above] (){};
\draw (-4.25,-3)--(-3.25,-3) (-3.75,-3.5)--(-3.75,-2.5);

\filldraw (-2,-3) circle (2pt) node[align=center,above] (){};
\filldraw (-1.5,-3) circle (2pt) node[align=center,above] (){};
\filldraw (-1.5,-2.5) circle (2pt) node[align=center,above] (){};
\filldraw (-1,-3) circle (2pt) node[align=center,above] (){};
\filldraw (-0.5,-3) circle (2pt) node[align=center,above] (){};
\draw (-2,-3)--(-1.5,-3) (-1,-3)--(-0.5,-3);

\filldraw (0,-3) circle (2pt) node[align=center,above] (){};
\filldraw (0.5,-3) circle (2pt) node[align=center,above] (){};
\filldraw (0.5,-2.5) circle (2pt) node[align=center,above] (){};
\filldraw (1,-3) circle (2pt) node[align=center,above] (){};
\filldraw (0.5,-3.5) circle (2pt) node[align=center,above] (){};
\draw (0,-3)--(1,-3);

\filldraw (1.5,-3) circle (2pt) node[align=center,above] (){};
\filldraw (2,-3) circle (2pt) node[align=center,above] (){};
\filldraw (2,-2.5) circle (2pt) node[align=center,above] (){};
\filldraw (2.5,-3) circle (2pt) node[align=center,above] (){};
\filldraw (2,-3.5) circle (2pt) node[align=center,above] (){};
\draw (1.5,-3)--(2.5,-3) (2,-3.5)--(2,-3);

\filldraw (3.5,-3) circle (2pt) node[align=center,above] (){};
\filldraw (4,-3) circle (2pt) node[align=center,above] (){};
\filldraw (4,-2.5) circle (2pt) node[align=center,above] (){};
\filldraw (4.5,-3) circle (2pt) node[align=center,above] (){};
\filldraw (5,-3) circle (2pt) node[align=center,above] (){};
\draw (3.5,-3)--(4,-3)--(4,-2.5) (4.5,-3)--(5,-3);

\filldraw (5.5,-3) circle (2pt) node[align=center,above] (){};
\filldraw (6,-3) circle (2pt) node[align=center,above] (){};
\filldraw (6,-2.5) circle (2pt) node[align=center,above] (){};
\filldraw (6.5,-3) circle (2pt) node[align=center,above] (){};
\filldraw (6,-3.5) circle (2pt) node[align=center,above] (){};
\draw (5.5,-3)--(6.5,-3) (6,-3)--(6,-2.5);

\filldraw (7,-3) circle (2pt) node[align=center,above] (){};
\filldraw (7.5,-3) circle (2pt) node[align=center,above] (){};
\filldraw (7.5,-2.5) circle (2pt) node[align=center,above] (){};
\filldraw (8,-3) circle (2pt) node[align=center,above] (){};
\filldraw (7.5,-3.5) circle (2pt) node[align=center,above] (){};
\draw (7,-3)--(8,-3) (7.5,-2.5)--(7.5,-3.5);

\end{tikzpicture}

\begin{equation*}
X_G=-\mathfrak{st}_{221}+\mathfrak{st}_{311}+2\mathfrak{st}_{32}-3\mathfrak{st}_{41}+2\mathfrak{st}_5
\end{equation*}
\caption{\label{fig:dnctree} A deletion-near-contraction tree for a graph $G$}
\end{figure} 

Note that if $e$ is a \emph{leaf edge} of $G$, meaning that one of its endpoints is a leaf, then we have $G\setminus e\cong(G/e)^\circ$ and $(G/e)^\multimap\cong G$, so the relation \eqref{eq:dncrelation} provides no new information. On the other hand, if $e$ is an \emph{internal edge} of $G$, meaning that it is not a leaf edge, then the graphs $G\setminus e$, $(G/e)^\circ$, and $(G/e)^\multimap$ all have fewer internal edges than $G$. Therefore, by repeatedly applying \eqref{eq:dncrelation}, we can express $X_G$ as a linear combination of graphs with no internal edges, which are precisely the star forests. \\

We can keep track of this expansion by drawing a \emph{deletion-near-contraction (DNC) tree} for $G$, which is the following ternary rooted tree. We start with a single node, the graph $G$. Then whenever a leaf of our DNC tree is a graph $G'$ that has an internal edge, we choose some internal edge $e$ and assign three child nodes $G'\setminus e$, $(G'/e)^\circ$, and $(G'/e)^\multimap$ to $G'$. We repeat until all leaves of our tree are star forests. Then $X_G$ is the sum of these $\mathfrak{st}_\lambda$ with signs according to the number of dot-contractions used. An example is given in Figure \ref{fig:dnctree}.\\

Note that since we never delete a leaf edge, isolated vertices appear precisely when we use a dot-contraction. Therefore, if $G$ is connected, then every path in our DNC tree from $G$ to $\text{St}_\lambda$ uses $a_1$ dot-contractions, where $a_1$ is the number of parts of $\lambda$ equal to $1$. In particular, every leaf $\text{St}_\lambda$ of our DNC tree contributes $(-1)^{a_1}\mathfrak{st}_\lambda$ to $X_G$ and there is no cancellation.\\

We also remark that while a different sequence of choices of internal edges can result in a different DNC tree, they will produce the same star expansion because the set $\{\mathfrak{st}_\lambda: \ \lambda\vdash n\}$ forms a basis for $\Lambda^n$ \cite[Theorem 5]{chrombases}.  

\section{The coefficient of $\mathfrak{st}_{21^{n-2}}$ and $2$-connectivity}\label{sec:2con}

In this section, we prove that the coefficient $c_{21^{n-2}}$ in $X_G$ characterizes whether a graph $G$ is $2$-connected.

\begin{definition}
A graph $G=(V,E)$ is \emph{2-connected} if $G$ has at least three vertices and for every vertex $v\in V$, the graph $G\setminus v$, given by deleting $v$ and all incident edges, is connected.
\end{definition}

\begin{lemma}\label{lem:2con}
Let $G=(V,E)$ be a $2$-connected graph with at least four vertices and take any edge $e=\{u,v\}\in E$. Then $G\setminus e$ or $G/e$ is $2$-connected. 
\end{lemma}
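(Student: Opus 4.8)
The plan is to argue by contradiction: assume that neither $G\setminus e$ nor $G/e$ is $2$-connected, and extract incompatible structural information about $G$. Throughout I will use the standard fact that a connected graph on at least three vertices is $2$-connected precisely when it has no cut vertex, together with Whitney's inequality $\kappa(G)\leq\lambda(G)$, which guarantees that a $2$-connected graph is also $2$-edge-connected; in particular $G\setminus e$ is connected, and $G/e$ is connected because it is a contraction of a connected graph. Both graphs have at least three vertices since $n\geq 4$, so failing to be $2$-connected means having a cut vertex.

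First I would analyze $G\setminus e$. Let $c$ be a cut vertex. If $c\in\{u,v\}$, say $c=u$, then $(G\setminus e)\setminus u=G\setminus u$, which is connected by $2$-connectivity of $G$, a contradiction; hence $c\notin\{u,v\}$. Then $(G\setminus e)\setminus c=(G\setminus c)\setminus e$, and since $G\setminus c$ is connected but becomes disconnected upon deleting $e$, the edge $e$ is a bridge of $G\setminus c$. Removing a bridge from a connected graph yields exactly two components, giving a partition $V\setminus\{c\}=A\sqcup B$ with $u\in A$, $v\in B$, and $e$ the \emph{only} edge of $G$ between $A$ and $B$.

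Next I would analyze $G/e$, writing $w$ for the contracted vertex. Let $d$ be a cut vertex of $G/e$. If $d\neq w$, then deleting $d$ and contracting $e$ commute, so $(G/e)\setminus d\cong(G\setminus d)/e$; but $G\setminus d$ is connected and contracting an edge preserves connectivity, a contradiction. Hence $d=w$, so $(G/e)\setminus w=G\setminus\{u,v\}$ is disconnected, say into components $C_1,\dots,C_k$ with $k\geq 2$. Because $C_1,\dots,C_k$ are the components of $G\setminus\{u,v\}$, there are no edges between distinct $C_i$ in $G$ at all; applying $2$-connectivity to $G\setminus u$ and to $G\setminus v$ then forces each $C_i$ to be adjacent in $G$ to both $u$ and $v$.

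The contradiction comes from combining the two structures. Any component $C_i$ not containing $c$ lies in $V\setminus\{u,v,c\}=(A\setminus\{u\})\cup(B\setminus\{v\})$; since $C_i$ is connected and the only $A$--$B$ edge is $e$ (incident to neither), $C_i$ lies entirely within $A$ or entirely within $B$. But a component inside $A$ cannot be adjacent to $v\in B$, and one inside $B$ cannot be adjacent to $u\in A$, contradicting that each $C_i$ touches both $u$ and $v$. Hence every $C_i$ must contain $c$, which is impossible for the $k\geq 2$ disjoint components. I expect the main obstacle to be the clean bookkeeping in this final merge---correctly pinning down that the cut vertex of $G/e$ can only be $w$, and then matching the ``bridge'' partition $A,B$ against the components $C_i$---rather than any single hard estimate.
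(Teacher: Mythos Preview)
Your proof is correct, and it takes a genuinely different route from the paper's. The paper assumes only that $G/e$ fails to be $2$-connected, fixes vertices $x,y$ in different components of $G\setminus\{u,v\}$, and then builds four paths $P_{xu},P_{uy}$ in $G\setminus v$ and $P_{xv},P_{vy}$ in $G\setminus u$; it argues that the absence of an $x$--$y$ path in $G\setminus\{u,v\}$ forces suitable pairs among these to be internally disjoint, yielding two internally vertex-disjoint $u$--$v$ walks in $G\setminus e$ and hence $2$-connectivity directly. Your argument instead runs a symmetric contradiction: from a cut vertex $c$ of $G\setminus e$ you extract the bridge partition $A\sqcup B$ of $V\setminus\{c\}$ with $e$ the unique $A$--$B$ edge, and from the cut vertex $w$ of $G/e$ you extract components $C_1,\dots,C_k$ of $G\setminus\{u,v\}$ each adjacent to both $u$ and $v$; these two structures are then incompatible because any $C_i$ avoiding $c$ must sit inside $A$ or inside $B$. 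The paper's Menger-style construction is more explicit about \emph{why} $G\setminus e$ is $2$-connected, but it requires some care in checking the internal-disjointness claims; your partition argument is cleaner to verify line by line and isolates exactly the two structural facts (the bridge in $G\setminus c$, the bilateral adjacency of each $C_i$) that drive the result.
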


\begin{proof}
Suppose that $G/e$ is not $2$-connected. This means that deleting some vertex $w$ of $G/e$ will disconnect the graph. Because $G$ is $2$-connected, if this vertex $w$ is not the contracted vertex $uv$, then $G\setminus w$ is connected and $(G/e)\setminus w=(G\setminus w)/e$ would be connected as well, so we must have $w=uv$. Let $x$ and $y$ be vertices not joined by a path in $(G/e)\setminus w=(G\setminus u)\setminus v$. Because $G$ is $2$-connected, there are paths $P_{xu}$ from $x$ to $u$ and $P_{uy}$ from $u$ to $y$ in $G\setminus v$, and there are paths $P_{xv}$ from $x$ to $v$ and $P_{vy}$ from $v$ to $y$ in $G\setminus u$. Because there is no path from $x$ to $y$ in $(G\setminus u)\setminus v$, the pairs $(P_{xu},P_{uy})$, $(P_{xu},P_{vy})$, $(P_{xv},P_{uy})$, and $(P_{xv},P_{vy})$ are internally vertex-disjoint. We now claim that $G\setminus e$ is $2$-connected.

Because $G$ is $2$-connected, the only way for $G\setminus e$ to not be $2$-connected is if deleting some vertex $w$ of $G\setminus e$ disconnects $u$ and $v$. But $G\setminus e$ has two internally vertex-disjoint walks from $u$ to $v$, namely the walk using $P_{xu}$ and $P_{xv}$, and the walk using $P_{uy}$ and $P_{vy}$. Therefore there will still be a path from $u$ to $v$ in any $(G\setminus e)\setminus w$, so indeed $G\setminus e$ is $2$-connected.
\end{proof}

\begin{theorem}\label{thm:2con}
Let $G$ be a connected graph with $n\geq 3$ vertices and let $X_G=\sum_\lambda c_\lambda\mathfrak{st}_\lambda$. Then the coefficient $c_{21^{n-2}}\neq 0$ if and only if $G$ is $2$-connected.
\end{theorem}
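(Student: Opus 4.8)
The plan is to extract $c_{21^{n-2}}$ from the deletion-near-contraction relation, reduce everything to two auxiliary facts, and then run an induction in which Lemma~\ref{lem:2con} drives one direction and a block decomposition drives the other.

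First I would record the single-edge bookkeeping. Since $(G/e)^\circ=(G/e)\sqcup K_1$ and $\mathfrak{st}_\mu\cdot\mathfrak{st}_1=\mathfrak{st}_{\mu\cup 1}$ (as $X$ is multiplicative over disjoint unions), extracting the coefficient of $\mathfrak{st}_{21^{n-2}}$ from \eqref{eq:dncrelation} for any internal edge $e$ gives
\[
c_{21^{n-2}}(G)=c_{21^{n-2}}(G\setminus e)-c_{21^{n-3}}(G/e)+c_{21^{n-2}}((G/e)^\multimap).
\]
The decisive structural observation is that $(G/e)^\multimap$ always has a pendant vertex, namely the vertex $\circ$ attached to $uv$. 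I also recall from the remark after \eqref{eq:dncrelation} that for connected $G$ there is no cancellation, so $c_{21^{n-2}}(G)=(-1)^{n-2}N(G)$, where $N(G)\ge 0$ counts the leaves of a DNC tree equal to $\text{St}_{21^{n-2}}$; this sign control is what lets me conclude genuine nonvanishing rather than a mere inequality.

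Next I would isolate two lemmas. First, every connected graph $H$ on $m\ge 2$ vertices has $c_{1^m}(H)=0$, equivalently no star forest in the support of $X_H$ is edgeless; this follows by a short induction through \eqref{eq:dncrelation}, using the identity above and, in the bridge case, that an internal bridge splits $H$ into two connected pieces each on $\ge 2$ vertices. Its consequence is that if $H=C_1\sqcup C_2$ with both $C_i$ connected on $\ge 2$ vertices, then $c_{21^{n-2}}(H)=0$, since the unique part $\ge 2$ of $21^{n-2}$ forces one $C_i$ to contribute an edgeless star forest. Second, the Pendant Lemma: if $G$ is connected on $n\ge 3$ vertices and has a pendant vertex, then $c_{21^{n-2}}(G)=0$. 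I expect this to be the main obstacle. I would prove it by induction on the pair (number of vertices, number of internal edges): if $G$ has no internal edge it is $\text{St}_n$ and the claim is immediate, and otherwise I apply the displayed recursion to any internal edge $e$, noting that since the pendant has degree $1$ it remains a pendant in every child graph. The term $c_{21^{n-2}}((G/e)^\multimap)$ vanishes because this graph is connected, still has a pendant, and has fewer internal edges; the term $c_{21^{n-3}}(G/e)$ vanishes because $G/e$ is connected on $n-1$ vertices and still has a pendant; and $c_{21^{n-2}}(G\setminus e)$ vanishes either by the inductive hypothesis (if $G\setminus e$ is connected) or by the first lemma (if $e$ is a bridge, as then both sides have $\ge 2$ vertices).

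Finally I would prove the theorem by induction on the pair (vertices, edges), with base case $n=3$, where $X_{K_3}=2\mathfrak{st}_3-\mathfrak{st}_{21}$ and $X_{P_3}=\mathfrak{st}_3$. For $n\ge 4$ with $G$ being $2$-connected, pick any edge $e$; it is internal and not a bridge, so $G\setminus e$ and $G/e$ are connected, and the Pendant Lemma kills the last term, giving $c_{21^{n-2}}(G)=(-1)^{n-2}\left(N(G\setminus e)+N(G/e)\right)$ after combining the signs. By Lemma~\ref{lem:2con} one of $G\setminus e$, $G/e$ is $2$-connected, so by the inductive hypothesis its $N$ is positive and $c_{21^{n-2}}(G)\ne 0$. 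For $G$ connected but not $2$-connected, take a leaf block $B$: if $B$ is a single edge then its non-cut endpoint is a pendant and the Pendant Lemma applies; if $B$ is $2$-connected, choose an edge $e$ of $B$ avoiding the cut vertex $v$, which is internal and not a bridge, and observe that $v$ remains a cut vertex of both $G\setminus e$ and $G/e$. Then all three terms of the recursion vanish, by the inductive hypothesis for the not-$2$-connected graphs $G\setminus e$ and $G/e$ and by the Pendant Lemma for $(G/e)^\multimap$, so $c_{21^{n-2}}(G)=0$.
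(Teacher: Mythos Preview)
Your proposal is correct and follows essentially the same approach as the paper: both directions use the DNC recursion together with Lemma~\ref{lem:2con}, the fact that $(G/e)^\multimap$ always has a pendant, and the observation that deleting an internal bridge leaves two components each of size at least $2$. The only difference is organizational---you isolate the Pendant Lemma and pick an edge in a leaf block, whereas the paper argues directly (in terms of paths in the DNC tree) that for \emph{any} internal edge all three children of a non-$2$-connected graph fail to reach $\text{St}_{21^{n-2}}$.
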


\begin{proof}
The coefficient $c_{21^{n-2}}$ is nonzero if and only if there is a path in a DNC tree from $G$ to the star forest $\text{St}_{21^{n-2}}$. If $G$ is $2$-connected, Lemma \ref{lem:2con} tells us that there is a sequence of deletions and dot-contractions that result in a graph consisting of $(n-3)$ isolated vertices and a $2$-connected three-vertex graph, which can only be the complete graph $K_3$. Applying one more dot contraction results in the star forest $\text{St}_{21^{n-2}}$.

Now suppose that $G$ is not $2$-connected. We use induction on $n$ to show that there is no path in a DNC tree from $G$ to $\text{St}_{21^{n-2}}$. If $n=3$, then $G$ must be the star $\text{St}_3$ and the DNC tree consists of only the node $G$, so suppose that $n\geq 4$. If $G$ has no internal edges, then $G=\text{St}_n$ and again we are done. Now suppose that $G$ has an internal edge $e$, then $G\setminus e$, $(G/e)^\circ$, and $(G/e)^\multimap$ are children of $G$ in a DNC tree. The graph $(G/e)^\multimap$ is connected and not $2$-connected, so by our induction hypothesis, there is no path from $(G/e)^\multimap$ to $\text{St}_{21^{n-2}}$. Similarly, a path from $(G/e)^\circ$ to $\text{St}_{21^{n-2}}$ would correspond to a path from $G/e$ to $\text{St}_{21^{n-3}}$ in a DNC tree for $G/e$, but $G/e$ is connected and not $2$-connected, so by our induction hypothesis no such path exists. If $G\setminus e$ is connected, then the same induction argument applies. If $G\setminus e$ is not connected, then because $e$ is an internal edge of $G$, the graph $G\setminus e$ has two connected components of at least two vertices. But both components must result in a star $\text{St}_k$ with $k\geq 2$ because the DNC relation is never applied to leaf edges. Therefore, there is once again no path from $G\setminus e$ to $\text{St}_{21^{n-2}}$. So $G$ has no path to $\text{St}_{21^{n-2}}$ and $c_{21^{n-2}}=0$.
\end{proof}

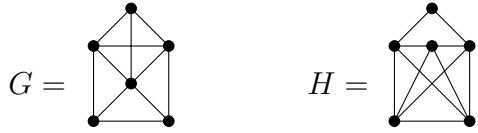
\begin{figure}
\begin{tikzpicture}

\node at (-0.75,0.5) {$G=$};
\filldraw (0,0) circle (2pt) node[align=center,above] (){};
\filldraw (0,1) circle (2pt) node[align=center,above] (){};
\filldraw (1,0) circle (2pt) node[align=center,above] (){};
\filldraw (1,1) circle (2pt) node[align=center,above] (){};
\filldraw (0.5,0.5) circle (2pt) node[align=center,above] (){};
\filldraw (0.5,1.5) circle (2pt) node[align=center,above] (){};
\draw (0,0)--(0,1)--(1,1)--(1,0)--(0,0) (0,1)--(0.5,1.5)--(1,1)--(0.5,0.5)--(0,1) (0.5,1.5)--(0.5,0.5) (0,0)--(0.5,0.5)--(1,0);

\node at (3.25,0.5) {$H=$};

\filldraw (4,0) circle (2pt) node[align=center,above] (){};
\filldraw (4,1) circle (2pt) node[align=center,above] (){};
\filldraw (5,0) circle (2pt) node[align=center,above] (){};
\filldraw (5,1) circle (2pt) node[align=center,above] (){};
\filldraw (4.5,1) circle (2pt) node[align=center,above] (){};
\filldraw (4.5,1.5) circle (2pt) node[align=center,above] (){};
\draw (4,0)--(4,1)--(5,1)--(5,0)--(4,0) (4,0)--(5,1)--(4.5,1.5)--(4,1)--(5,0) (4,0)--(4.5,1)--(5,0);

\end{tikzpicture}
\caption{\label{fig:kconexample} Graphs $G$ and $H$ with $X_G=X_H$, but $\kappa(G)=3$ and $\kappa(H)=2$}
\end{figure}

\begin{remark}
It turns out that a connected graph $G$ is $2$-connected if and only if its chromatic polynomial $\chi_G(t)$ has the root $1$ with multiplicity at least two \cite[Theorem 1]{cutchrom}. So Theorem \ref{thm:2con} shows the curious fact that $c_{21^{n-2}}\neq 0$ if and only if $\chi'_G(1)=0$ (here the prime indicates the derivative). 
\end{remark}

We now show that the chromatic symmetric function cannot detect $k$-connectivity for any $k\geq 3$. 

\begin{definition}
A graph $G=(V,E)$ is \emph{$k$-connected} if $G$ has at least $(k+1)$ vertices and for every set of $(k-1)$ vertices $\{v_1,\ldots,v_{k-1}\}\subset V$, the graph $G\setminus\{v_1,\ldots,v_{k-1}\}$, given by deleting all of the $v_i$ and all incident edges, is connected. The \emph{connectivity} $\kappa(G)$ is the largest $k$ such that $G$ is $k$-connected.
\end{definition}

\begin{example}
Consider the graphs $G$ and $H$ in Figure \ref{fig:kconexample}. By using Sage, or by enumerating proper colourings, we can calculate that
\begin{equation*}
X_G=X_H=12m_{2211}+96m_{21111}+720m_{111111}.
\end{equation*}
Furthermore, the graph $G$ is $3$-connected, whereas $H$ is only $2$-connected because the top vertex has degree two, so we can disconnect $H$ by removing its two neighbours. Therefore, the chromatic symmetric function cannot distinguish whether a graph is $3$-connected.
\end{example}

We can extend this example to $k$-connectivity for all $k\geq 3$ by adding \emph{universal vertices}, which are vertices adjacent to all others.

\begin{lemma}\label{lem:universal}
Let $G=(V,E)$ be a graph and let $X_G=\sum_\lambda b_\lambda m_\lambda$ be its $m$-expansion. Let $G^\bullet$ be the graph obtained by adding a new vertex $\bullet$ and joining it to all vertices of $G$. Then $\kappa(G^\bullet)=\kappa(G)+1$ and 
\begin{equation*}
X_{G^\bullet}=\sum_\lambda a_1 b_{\lambda\setminus 1}m_\lambda,
\end{equation*}
where $a_1$ is the number of $1$'s in $\lambda$ and $\lambda\setminus 1$ is the partition obtained by removing a $1$.
\end{lemma}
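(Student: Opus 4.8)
**The plan is to prove Lemma \ref{lem:universal} by establishing the two claims separately: first the connectivity statement $\kappa(G^\bullet)=\kappa(G)+1$, then the $m$-expansion formula.**

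For the connectivity claim, I would argue both inequalities. Suppose $G$ is $k$-connected, so $\kappa(G)\geq k$. To see that $G^\bullet$ is $(k+1)$-connected, take any set $S$ of $k$ vertices of $G^\bullet$; I must show $G^\bullet\setminus S$ is connected. If $\bullet\in S$, then $S\setminus\{\bullet\}$ is a set of $k-1$ vertices of $G$, so $G\setminus(S\setminus\{\bullet\})=G^\bullet\setminus S$ is connected by $k$-connectivity of $G$. If $\bullet\notin S$, then $\bullet$ survives in $G^\bullet\setminus S$ and remains adjacent to every other surviving vertex, so $G^\bullet\setminus S$ is connected (indeed $\bullet$ is a universal vertex of it). This gives $\kappa(G^\bullet)\geq\kappa(G)+1$. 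For the reverse inequality, if $G^\bullet$ is $(k+1)$-connected, then deleting $\bullet$ together with any $k-1$ vertices of $G$ leaves a connected graph; deleting $\bullet$ from $G^\bullet$ returns $G$, so this says $G$ is $k$-connected, giving $\kappa(G)\geq\kappa(G^\bullet)-1$. One care point is the vertex-count threshold in the definition of $k$-connectivity, which I would check holds automatically since $G^\bullet$ has one more vertex than $G$.

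For the $m$-expansion formula, I would work directly from the definition of $X_G$ as a sum over proper colourings, tracking the coefficient $b_\mu$ of $m_\mu$ in $X_G$, which counts proper colourings of $G$ whose multiset of colour-class sizes is the partition $\mu$ (with the standard normalization built into $m_\mu$). The key observation is that a proper colouring of $G^\bullet$ restricts to a proper colouring of $G$ together with a choice of colour for $\bullet$ that differs from every colour used on $G$; equivalently, $\bullet$ must receive a brand-new colour forming a singleton colour class. Thus proper colourings of $G^\bullet$ with colour-class partition $\lambda$ correspond to proper colourings of $G$ with colour-class partition $\lambda\setminus 1$, where the removed $1$ is the singleton class of $\bullet$. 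I would make this bijection precise at the level of monomials: each monomial $x_{i_1}^{t_1}\cdots$ contributing to $X_G$ with exponent partition $\lambda\setminus 1$ extends to monomials for $G^\bullet$ by multiplying by a variable $x_j$ for a colour $j$ not already present, which accounts for the factor $a_1$ arising from the number of ways the new singleton can sit among the $a_1$ parts of size $1$ in $\lambda$.

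The main obstacle is getting the combinatorial factor $a_1$ correct, since it is easy to conflate the labelled count of proper colourings with the coefficient $b_\lambda$ of the monomial symmetric function, which already collapses colour permutations. The cleanest way to avoid an off-by-a-factor error is to pass through the augmented monomial symmetric functions $\widetilde m_\lambda=\left(\prod_i r_i!\right)m_\lambda$, where $r_i$ is the multiplicity of $i$ in $\lambda$, in which the coefficient is exactly the number of proper colourings with a prescribed colour-class structure; in that normalization the bijection "append a new singleton" is weight-preserving with no factor, and translating back to the $m_\lambda$ normalization produces precisely the ratio of multiplicities of $1$, namely $a_1$. I would therefore verify the identity first in the $\widetilde m$-basis and then convert, which isolates the factor $a_1$ as a bookkeeping step rather than a source of error.
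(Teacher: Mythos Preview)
Your proposal is correct and follows essentially the same approach as the paper: for connectivity you argue that any separating set must contain $\bullet$, and for the $m$-expansion you use that $\bullet$ must occupy a singleton colour class. The paper is terser on both points and extracts the factor $a_1$ directly---interpreting $b_\lambda$ as the number of proper colourings with exactly $\lambda_i$ copies of colour $i$, so that $\bullet$ may be assigned any of the $a_1$ colours $i$ with $\lambda_i=1$---which makes your detour through the augmented monomials $\widetilde m_\lambda$ unnecessary, though not wrong.
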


\begin{proof}
Because the vertex $\bullet$ is adjacent to all other vertices of $G^\bullet$, it must be removed, along with $\kappa(G)$ vertices of $G$, in order to disconnect $G^\bullet$. The coefficient of $m_\lambda$ in $X_{G^\bullet}$ counts the number of proper colourings $c$ using $\lambda_1$ $1$'s, $\lambda_2$ $2$'s, and so on. Because the vertex $\bullet$ is adjacent to all other vertices, it must be coloured using one of the $a_1$ colours $i$ for which $\lambda_i=1$. Then the rest of the graph must be coloured according to the partition $\lambda\setminus 1$.
\end{proof}

\begin{corollary}
For every $k\geq 3$, there are $(k+3)$-vertex graphs $G$ and $H$ with $X_G=X_H$, but with $\kappa(G)=k$ and $\kappa(H)=k-1$.
\end{corollary}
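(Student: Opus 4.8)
The plan is to bootstrap from the six-vertex pair $G$ and $H$ of the preceding Example, which already settles the case $k=3$ (there $k+3=6$, $\kappa(G)=3$, and $\kappa(H)=2$), and then to iterate the universal-vertex construction of Lemma \ref{lem:universal} to climb to arbitrarily high connectivity while preserving both the equality of chromatic symmetric functions and the gap of $1$ in connectivity.

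First I would fix the base graphs $G_3:=G$ and $H_3:=H$ from the Example and, for each $k\geq 3$, define $G_k$ and $H_k$ by adjoining $k-3$ universal vertices one at a time; that is, $G_k=(\cdots(G_3^\bullet)^\bullet\cdots)^\bullet$ with $k-3$ applications of the $\bullet$-operation, and likewise for $H_k$. Since each application adds exactly one vertex, $G_k$ and $H_k$ each have $6+(k-3)=k+3$ vertices, as required.

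Next I would track the two invariants through the iteration. For connectivity, Lemma \ref{lem:universal} gives $\kappa(\Gamma^\bullet)=\kappa(\Gamma)+1$, so after $k-3$ steps $\kappa(G_k)=\kappa(G_3)+(k-3)=3+(k-3)=k$ and $\kappa(H_k)=\kappa(H_3)+(k-3)=2+(k-3)=k-1$. For the chromatic symmetric function, the key point is that Lemma \ref{lem:universal} expresses the $m$-expansion of $X_{\Gamma^\bullet}$ purely in terms of the $m$-coefficients of $X_\Gamma$ via the fixed rule $X_\Gamma=\sum_\lambda b_\lambda m_\lambda\mapsto X_{\Gamma^\bullet}=\sum_\lambda a_1 b_{\lambda\setminus 1}m_\lambda$, which depends on nothing but the coefficients $b_\lambda$. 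Hence if $X_\Gamma=X_{\Gamma'}$ then $X_{\Gamma^\bullet}=X_{(\Gamma')^\bullet}$. Applying this implication $k-3$ times to the equality $X_{G_3}=X_{H_3}$ from the Example yields $X_{G_k}=X_{H_k}$.

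I do not expect a genuine obstacle here; the construction is a clean induction and the only real work is the bookkeeping. The one step that deserves explicit justification is that equal chromatic symmetric functions remain equal after adding a universal vertex — this is immediate from Lemma \ref{lem:universal} since the operation $X_\Gamma\mapsto X_{\Gamma^\bullet}$ factors through the $m$-coefficients alone — together with the observation that the connectivity increment is independent of the starting graph, so the difference $\kappa(G_k)-\kappa(H_k)=1$ is preserved at every stage.
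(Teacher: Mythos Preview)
Your proposal is correct and follows essentially the same approach as the paper: induct on $k$ starting from the six-vertex pair of the Example, and at each step add a universal vertex, invoking Lemma \ref{lem:universal} to see that connectivity increases by one while equality of chromatic symmetric functions is preserved. The paper's write-up is terser, but the argument is identical.
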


\begin{proof}
We use induction on $k$. If $k=3$, we can take the graphs $G$ and $H$ in Figure \ref{fig:kconexample}. If $k\geq 4$, then by induction there are $(k+2)$-vertex graphs $G'$ and $H'$ with $X_{G'}=X_{H'}$, $\kappa(G')=k-1$, and $\kappa(H')=k-2$. By Lemma \ref{lem:universal}, the $(k+3)$-vertex graphs $G=(G')^\bullet$ and $H=(H')^\bullet$ now satisfy $X_G=X_H$, $\kappa(G)=k$, and $\kappa(H)=k-1$.
\end{proof}

\section{Bases for spans of chromatic symmetric functions}\label{sec:linear}

In this section, we prove new linear relations on star coefficients. This will give us new insight into the linear algebra of chromatic symmetric functions.

\begin{lemma}\label{lem:hookrelation}
Let $G$ be a connected $n$-vertex graph with $n\geq 3$ and let $X_G=\sum_\lambda c_\lambda\mathfrak{st}_\lambda$. Consider the sum of hook star coefficients
\begin{equation}
h(G)=2c_{21^{n-2}}+c_{31^{n-3}}+c_{41^{n-4}}+\cdots+c_{(n-2)11}+c_{(n-1)1}+c_n.
\end{equation}
Then we have
\begin{equation}\label{eq:hookrelation}
h(G)=\begin{cases}0,&\text{ if }G\neq \text{St}_n,\\
1,&\text{ if }G=\text{St}_n.
\end{cases}
\end{equation}
\end{lemma}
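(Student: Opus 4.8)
The plan is to view $h$ as the linear functional $\phi$ on $\Lambda^n$ that reads off the weighted hook coefficients, so that $h(G)=\phi(X_G)$ for \emph{every} graph $G$ on $n$ vertices, where $\phi(\mathfrak{st}_{21^{n-2}})=2$, $\phi(\mathfrak{st}_{k1^{n-k}})=1$ for $3\le k\le n$, and $\phi(\mathfrak{st}_\lambda)=0$ for every other $\lambda\vdash n$ (in particular $\phi(\mathfrak{st}_{1^n})=0$). Since $X_{\text{St}_n}=\mathfrak{st}_n$ gives $h(\text{St}_n)=1$ at once, it remains to prove $h(G)=0$ for every connected non-star $G$. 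I would prove this by a double induction, outer on $n$ and inner on the number of internal edges, pushing everything through the DNC relation \eqref{eq:dncrelation}; the base case $n=3$ is the direct computation $h(\text{St}_3)=1$, $h(K_3)=0$. Throughout I use the product rule $\mathfrak{st}_\mu\mathfrak{st}_\nu=\mathfrak{st}_{\mu\cup\nu}$, valid because $\mathfrak{st}_\mu\mathfrak{st}_\nu=X_{\text{St}_\mu\sqcup\text{St}_\nu}$.

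Two auxiliary facts feed the induction. First (Fact A): the coefficient of $\mathfrak{st}_{1^m}$ in $X_H$ equals $1$ if $H$ is edgeless and $0$ if $H$ has an edge. I would prove this by induction on the number of internal edges via \eqref{eq:dncrelation}, noting that $(H/e)^\circ=(H/e)\sqcup\{\circ\}$ contributes the coefficient of $\mathfrak{st}_{1^{m-1}}$ in $X_{H/e}$, and that each of $H\setminus e$, $H/e$, $(H/e)^\multimap$ still has an edge. Second (Fact B): for any graph $H$, $h(H\sqcup\{\circ\})=h(H)$; since $X_{H\sqcup\{\circ\}}=p_1X_H$ and multiplication by $p_1=\mathfrak{st}_1$ sends $\mathfrak{st}_\mu\mapsto\mathfrak{st}_{\mu\cup1}$, the hook coefficients merely shift while the top coefficient of the full star is forced to vanish, and a short bookkeeping check confirms the weighted sum is preserved. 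Combining the product rule with Fact A yields the crucial vanishing (Fact C): if $H=A\sqcup B$ with both $A$ and $B$ having an edge, then $h(H)=0$, because every hook $(k,1^{n-k})$ must place its unique part $k\ge2$ entirely in one factor, forcing the other factor to contribute $\mathfrak{st}_{1^{|A|}}$ or $\mathfrak{st}_{1^{|B|}}$, whose coefficient is $0$ by Fact A.

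For the main step, let $G$ be connected and non-star on $n\ge4$ vertices, so it has an internal edge $e=\{u,v\}$. Applying \eqref{eq:dncrelation} and Fact B (to rewrite $(G/e)^\circ$) gives
\[
h(G)=h(G\setminus e)-h(G/e)+h((G/e)^\multimap).
\]
Now $G/e$ is connected on $n-1$ vertices, so by the outer hypothesis $h(G/e)$ is $1$ exactly when $G/e=\text{St}_{n-1}$; and $(G/e)^\multimap$ is connected on $n$ vertices with fewer internal edges, so by the inner hypothesis $h((G/e)^\multimap)$ is $1$ exactly when $(G/e)^\multimap=\text{St}_n$. A structural check shows $(G/e)^\multimap=\text{St}_n$ if and only if $G/e=\text{St}_{n-1}$ with the contracted vertex $uv$ as its center. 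Hence the middle two terms cancel unless $G/e$ is a star in which $uv$ is a leaf, and in all non-exceptional cases the identity reduces to needing $h(G\setminus e)=0$: this holds by Fact C when $G\setminus e$ is disconnected (both components inherit an edge from $u$ and from $v$, since $e$ is internal), and by the inner hypothesis when $G\setminus e$ is connected, as then $G\setminus e\ne\text{St}_n$.

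The remaining and most delicate case is the exceptional one, $G/e=\text{St}_{n-1}$ with $uv$ a leaf, where the indicators fail to cancel and contribute $-1$. I expect the main obstacle to be pinning down the equivalence that, because $e$ is internal, this configuration forces $G$ to be a triangle on $\{u,v,z\}$ together with pendant leaves attached to $z$; equivalently $G\setminus e=\text{St}_n$; equivalently $G$ is a star plus one edge between two leaves. The point is to use internality of $e$ to rule out the alternative in which one of $u,v$ is itself a leaf (which would make $e$ a leaf edge). Granting this equivalence, $h(G\setminus e)=h(\text{St}_n)=1$ exactly compensates the $-1$, and the relation again yields $h(G)=1-1+0=0$, completing the induction.
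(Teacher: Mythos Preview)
Your argument is correct and matches the paper's approach: double induction on $n$ and on the number of internal edges via the DNC relation, with the same case analysis on whether $G/e=\text{St}_{n-1}$ and whether the contracted vertex sits at its center or at a leaf. You are in fact more careful than the paper in one place: your Fact~C (built on Fact~A) handles the case where $G\setminus e$ is disconnected, which the paper's proof tacitly assumes away when it writes $h(G\setminus e)=0$ in the ``none of these hold'' case even though its induction hypothesis only covers connected graphs.
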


\begin{proof}
We use induction on $n$. If $n=3$, the only possibilities for $G$ are the star $\text{St}_3$ and the complete graph $K_3$, and it is routine to check that \eqref{eq:hookrelation} holds, so suppose that $n\geq 4$. We also use induction on the number of internal edges of $G$. If $G$ has no internal edges, then $G=\text{St}_n$, $c_n=1$, and all the other $c_\lambda=0$, so $h(\text{St}_n)=1$. Now suppose that $G$ has some internal edge $e=\{u,v\}$. We will show that $h(G)=0$. By the DNC relation, we have
\begin{equation}\label{eq:hookinduction}
h(G)=h(G\setminus e)-h((G/e)^\circ)+h((G/e)^\multimap)=h(G\setminus e)-h(G/e)+h((G/e)^\multimap).
\end{equation}
Our induction hypothesis allows us to calculate the right hand side of \eqref{eq:hookinduction}. We consider different cases, depending on whether $G\setminus e=\text{St}_n$, $G/e=\text{St}_{n-1}$, and $(G/e)^\multimap=\text{St}_n$. If none of these hold, then $h(G)=0-0+0=0$. Now suppose that $G/e=\text{St}_{n-1}$. If the contracted vertex $uv$ is the central vertex of $\text{St}_{n-1}$, then because $e$ was an internal edge of $G$, both $u$ and $v$ have other neighbours in $G$, so we have $G\setminus e\neq\text{St}_n$. We also have $(G/e)^\multimap=\text{St}_n$, so $h(G)=0-1+1=0$. If the contracted vertex $uv$ is not the central vertex $w$ of $\text{St}_{n-1}$, then $u$ and $v$ must have been neighbours of $w$, so we have $G\setminus e=\text{St}_n$, $(G/e)^\multimap\neq\text{St}_n$, and $h(G)=1-1+0=0$. 

If $G\setminus e=\text{St}_n$, then $G$ must have been the $n$-vertex star with an additional edge joining two leaves, so $G/e=\text{St}_{n-1}$ and we have already seen this case. Similarly, if $(G/e)^\multimap=\text{St}_n$, then $G/e=\text{St}_{n-1}$ and again we have seen this case. Therefore, $h(G)=0$ in all cases. 
\end{proof}

\begin{corollary}\label{cor:star}
The chromatic symmetric function $\mathfrak{st}_n$ does not lie in the space
\begin{equation}
\text{span}_{\mathbb Q}\{X_G: \ G\text{ connected }n\text{-vertex graph, }G\neq\text{St}_n\}.
\end{equation}
\end{corollary}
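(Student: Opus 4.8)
The plan is to exploit Lemma \ref{lem:hookrelation} as a separating linear functional. First I would define a $\mathbb Q$-linear map $\phi\colon\Lambda^n\to\mathbb Q$ by declaring, for any $f=\sum_\lambda c_\lambda\mathfrak{st}_\lambda$ written in the star basis,
\begin{equation*}
\phi(f)=2c_{21^{n-2}}+c_{31^{n-3}}+\cdots+c_{(n-1)1}+c_n.
\end{equation*}
Since $\{\mathfrak{st}_\lambda:\lambda\vdash n\}$ is a basis of $\Lambda^n$, the scalars $c_\lambda$ are well-defined coordinates, so $\phi$ is genuinely linear on all of $\Lambda^n$. By construction $\phi(X_G)=h(G)$ for every graph $G$, so the quantity $h$, although originally phrased as something attached to a graph, is literally the restriction of $\phi$ to chromatic symmetric functions.

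Next I would read off the two facts I need directly from Lemma \ref{lem:hookrelation}: it says exactly that $\phi(X_G)=h(G)=0$ for every connected $n$-vertex graph $G\neq\text{St}_n$, whereas $\phi(\mathfrak{st}_n)=\phi(X_{\text{St}_n})=h(\text{St}_n)=1$. Thus $\phi$ vanishes identically on the proposed spanning set but not on $\mathfrak{st}_n$.

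Finally I would conclude by contradiction. Suppose $\mathfrak{st}_n$ did lie in the span, so that $\mathfrak{st}_n=\sum_i\alpha_iX_{G_i}$ for some scalars $\alpha_i\in\mathbb Q$ and connected $n$-vertex graphs $G_i\neq\text{St}_n$. Applying the linear functional $\phi$ to both sides gives
\begin{equation*}
1=\phi(\mathfrak{st}_n)=\sum_i\alpha_i\,\phi(X_{G_i})=\sum_i\alpha_i\cdot 0=0,
\end{equation*}
which is absurd. Hence $\mathfrak{st}_n$ is not in the span. All the substance is already absorbed into Lemma \ref{lem:hookrelation}, and given that lemma the corollary is just the standard observation that a linear functional vanishing on a set vanishes on its span; I therefore do not anticipate any genuine obstacle beyond making the reinterpretation of $h$ as a fixed functional on $\Lambda^n$ explicit.
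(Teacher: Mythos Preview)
Your proposal is correct and is essentially the same as the paper's own argument: the paper simply notes that by Lemma~\ref{lem:hookrelation} the star $\text{St}_n$ is the unique connected $n$-vertex graph with $h(G)\neq 0$, which is exactly your separating-linear-functional observation spelled out in detail.
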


\begin{proof}
By Lemma \ref{lem:hookrelation}, the star $\text{St}_n$ is the unique connected $n$-vertex graph that does not satisfy the linear equation $h(G)=0$.
\end{proof}

We will show in Theorem \ref{thm:starcoloop} that for $n\geq 4$, the star $\text{St}_n$ is the \emph{only} connected $n$-vertex graph that is not a linear combination of others.\\

We now turn our attention to trees. Gonzalez, Orellana, and Tomba identifed the \emph{leading partition} of $X_T=\sum_\lambda c_\lambda\mathfrak{st}_\lambda$, meaning the lexicographically smallest partition $\mu$ with $c_\mu\neq 0$.

\begin{theorem} \label{thm:leading} \cite[Theorem 4.15]{chromstar}
Let $T$ be a tree and let $F$ be the forest obtained by deleting all internal edges of $T$. Then the leading partition of $X_T$ is the partition whose parts are the sizes of the connected components of $F$.
\end{theorem}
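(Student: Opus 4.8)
The plan is to work entirely inside the DNC tree and to exploit the fact that there is no cancellation. Isolated vertices are created only by dot-contractions and are never destroyed (deletions and contractions are only ever applied to internal edges, whose endpoints are not isolated), so every DNC path from a graph $G$ to a fixed $\mathrm{St}_\lambda$ uses exactly $a_1(\lambda)-i_0(G)$ dot-contractions, where $a_1(\lambda)$ is the number of parts of $\lambda$ equal to $1$ and $i_0(G)$ is the number of isolated vertices of $G$. Hence all contributions to $c_\lambda$ carry the same sign, and $c_\lambda\neq 0$ if and only if $\mathrm{St}_\lambda$ is reachable from $G$ in some DNC tree. Writing $\mu(G)$ for the partition recording the component sizes of $F$, the theorem becomes the pair of assertions that $\mathrm{St}_{\mu(G)}$ is reachable from $T$ and that no $\mathrm{St}_\lambda$ with $\lambda<_{\mathrm{lex}}\mu(G)$ is reachable. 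I would prove both at once, and for forests as well as trees, by induction on the number of internal edges.

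It is convenient to allow forests: the star basis is multiplicative, $\mathfrak{st}_\alpha\mathfrak{st}_\beta=\mathfrak{st}_{\alpha\cup\beta}$, and $\mu$ of a disjoint union is the union of the $\mu$'s of its components. Writing $\mathrm{lead}(G)$ for the lexicographically least reachable partition, the no-cancellation statement reduces the theorem to proving $\mathrm{lead}(G)=\mu(G)$. The base case $G=\mathrm{St}_{\mu(G)}$ (no internal edges) is immediate. For the inductive step I would choose the internal edge $e=\{u,v\}$ carefully: take $u$ to be a leaf of the \emph{trunk}, the subtree induced on the non-leaf vertices, so that $v$ is the unique non-leaf neighbour of $u$, and $u$ carries $\ell\geq 1$ leaf-neighbours, giving it a leaf-star of size $s_u=\ell+1$. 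Every reachable partition of $G$ is reachable from one of the three children $G\setminus e$, $(G/e)^\circ$, $(G/e)^\multimap$, each of which has strictly fewer internal edges, so by induction its reachable partitions are $\geq_{\mathrm{lex}}$ its own $\mu$; thus $\mathrm{lead}(G)=\min_{\mathrm{lex}}$ of $\mu(G\setminus e)$, $\mu((G/e)^\circ)$, $\mu((G/e)^\multimap)$, and this minimum is itself reachable.

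The heart of the argument is the combinatorial comparison of these three partitions with $\mu(G)$, done by tracking the multiset of leaf-star sizes. The leaf-contraction merges the leaf-stars of $u$ and $v$ into one of size $s_u+s_v$, so $\mu((G/e)^\multimap)$ replaces $\{s_u,s_v\}$ by $\{s_u+s_v\}$ and is strictly lex-larger. The dot-contraction gives $uv$ a leaf-star of size $s_u+s_v-1$ and spawns one isolated vertex, so $\mu((G/e)^\circ)$ replaces $\{s_u,s_v\}$ by $\{s_u+s_v-1,1\}$; this is strictly lex-larger unless $v$ has no leaf-neighbours, i.e. $s_v=1$, in which case it equals $\mu(G)$. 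For the deletion one splits on $\deg_G(v)$: if $v$ remains a non-leaf (always when $\deg_G(v)\geq 3$, and also when $\deg_G(v)=2$ with $v$'s other neighbour a leaf) then $\mu(G\setminus e)=\mu(G)$, whereas if $\deg_G(v)=2$ with $v$'s other neighbour a non-leaf, then $v$ was an isolated trunk vertex contributing a part $1$ that gets absorbed, making $\mu(G\setminus e)$ strictly lex-larger. Assembling the cases, all three children satisfy $\mu\geq_{\mathrm{lex}}\mu(G)$, and in every case at least one realises equality (the deletion when $\deg_G(v)\geq 3$ or $s_v\geq 2$; the dot-contraction exactly when $\deg_G(v)=2$ and $s_v=1$), so $\mathrm{lead}(G)=\mu(G)$.

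I expect the main obstacle to be the deletion case analysis, specifically the degree-two degeneracy: deleting $e$ can demote $v$ to a leaf and thereby grow a leaf-star, so the expected part of $\mu(G)$ is then supplied not by the deletion branch but by the isolated vertex produced in the dot-contraction branch. Keeping this bookkeeping consistent — and verifying that in each configuration exactly which branch attains $\mu(G)$ while the others are strictly lex-larger — is the delicate part. It is precisely the no-cancellation fact that frees me from having to rule out two branches both attaining $\mu(G)$ and cancelling, since reachability alone then forces $c_{\mu(G)}\neq 0$.
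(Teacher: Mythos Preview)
The paper does not prove this statement; it is quoted from \cite[Theorem~4.15]{chromstar} and used as a black box, so there is no in-paper proof to compare against.

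Your argument is correct and would serve as a self-contained proof. The induction on the number of internal edges, applied at an edge incident to a trunk leaf, together with the no-cancellation observation (so that ``reachable in some DNC tree'' is equivalent to ``nonzero star coefficient''), cleanly reduces the claim to the three lex comparisons you carry out. Your tracking of how $\mu$ transforms under deletion, dot-contraction, and leaf-contraction is right, and the final assembly---showing that all three children satisfy $\mu(\text{child})\geq_{\mathrm{lex}}\mu(G)$ with equality for at least one---is a complete case check.

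One small inaccuracy: in the deletion branch with $\deg_G(v)=2$ and $v$'s other neighbour $w$ a leaf, you claim that $v$ ``remains a non-leaf,'' but in fact its degree drops to $1$, so $v$ becomes a leaf. Your conclusion $\mu(G\setminus e)=\mu(G)$ is nevertheless correct there: in this situation the trunk is just $\{u,v\}$, the component $\{v,w\}$ of $G\setminus e$ contributes a part $2=s_v$ exactly as $v$'s leaf-star did in $G$, and $u$'s detached star contributes $s_u$. So only the justification for that sub-case needs adjusting; the dichotomy and the outcome are right.
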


They then showed that every possible leading partition is attained by some \emph{caterpillar}.

\begin{definition}
Let $\alpha=\alpha_1\cdots\alpha_\ell$ be a \emph{composition} of size $n$ (denoted $\alpha\vDash n)$, meaning a sequence of positive integers with sum $n$. Assume that $\alpha_1,\alpha_\ell\geq 2$. The \emph{caterpillar} $\text{Cat}_\alpha$ is the $n$-vertex tree formed by taking a path of $\ell$ vertices $v_1,\ldots,v_\ell$ and attaching $(\alpha_i-1)$ leaves to vertex $v_i$ for $1\leq i\leq\ell$. 
\end{definition}

\begin{corollary}\label{cor:catleading}
The leading partition of $X_{\text{Cat}_\alpha}$ is $\lambda=\text{sort}(\alpha)$, given by writing $\alpha$ in decreasing order.
\end{corollary}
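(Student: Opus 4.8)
The plan is to derive this as a direct application of Theorem \ref{thm:leading}, so the only real content is identifying the internal edges of $\text{Cat}_\alpha$ and the forest $F$ that Theorem \ref{thm:leading} refers to. First I would verify that every spine vertex $v_i$ is a non-leaf. Indeed, $v_1$ and $v_\ell$ have degrees $\alpha_1$ and $\alpha_\ell$, both at least $2$ by hypothesis, while each interior spine vertex $v_i$ with $2\leq i\leq\ell-1$ has degree $(\alpha_i-1)+2\geq 2$. Since every spine vertex has degree at least $2$, the pendant edges joining the attached leaves to the spine are exactly the leaf edges of $\text{Cat}_\alpha$, and the spine edges $\{v_i,v_{i+1}\}$ are exactly its internal edges.

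Next I would determine the forest $F$ obtained by deleting all internal edges. Removing the spine edges separates $\text{Cat}_\alpha$ into $\ell$ pieces, where the $i$-th piece consists of $v_i$ together with its $\alpha_i-1$ attached leaves. This piece is precisely the star $\text{St}_{\alpha_i}$ on $\alpha_i$ vertices, degenerating to a single isolated vertex when $\alpha_i=1$. Hence $F=\text{St}_{\alpha_1}\sqcup\cdots\sqcup\text{St}_{\alpha_\ell}$, and the sizes of its connected components are exactly $\alpha_1,\ldots,\alpha_\ell$. By Theorem \ref{thm:leading}, the leading partition of $X_{\text{Cat}_\alpha}$ is the partition with these parts, which is $\text{sort}(\alpha)$. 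I would dispatch the degenerate case $\ell=1$ separately, where $\text{Cat}_\alpha=\text{St}_n$ has no internal edges, so $F=\text{St}_n$ is connected and the leading partition is $(n)=\text{sort}(\alpha)$.

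There is no substantive obstacle here: the entire statement falls out of Theorem \ref{thm:leading} once the edge combinatorics of the caterpillar is pinned down. The only point requiring minor care is the bookkeeping for interior spine vertices with $\alpha_i=1$, which contribute singleton components of size $1$ to $F$; one must confirm these are still counted as parts equal to $1$ in the resulting partition. The hypothesis $\alpha_1,\alpha_\ell\geq 2$ plays no role beyond guaranteeing that $\text{Cat}_\alpha$ is well-defined as a tree whose spine endpoints are genuine leaves' parents rather than leaves themselves.
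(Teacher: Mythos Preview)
Your proof is correct and takes the same approach as the paper, which presents this as an immediate corollary of Theorem~\ref{thm:leading} without an explicit argument. Your identification of the internal edges as exactly the spine edges and the resulting forest $F$ as $\text{St}_{\alpha_1}\sqcup\cdots\sqcup\text{St}_{\alpha_\ell}$ is precisely what the paper has in mind.
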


\begin{corollary}
A partition $\lambda$ is the leading partition of $X_T$ for some $n$-vertex tree if and only if either $\lambda=n$ or $\lambda_2\geq 2$.
\end{corollary}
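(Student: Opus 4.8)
The plan is to prove the two directions separately, using Corollary \ref{cor:catleading} to build trees for the \emph{if} direction and Theorem \ref{thm:leading} to analyze an arbitrary tree for the \emph{only if} direction.

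For the \emph{if} direction, I would realize each allowed $\lambda$ as a leading partition. When $\lambda=(n)$, the star $\text{St}_n$ has no internal edges, so deleting them leaves $\text{St}_n$ itself, which is connected, and Theorem \ref{thm:leading} gives leading partition $(n)$. When $\lambda=(\lambda_1,\dots,\lambda_\ell)$ satisfies $\lambda_2\geq 2$, then $\lambda_1\geq\lambda_2\geq 2$, so $\lambda$ has at least two parts that are $\geq 2$. I would form the composition $\alpha=(\lambda_1,\lambda_3,\lambda_4,\dots,\lambda_\ell,\lambda_2)$, obtained by moving the part $\lambda_2$ to the end; this places parts $\geq 2$ at both ends, so $\alpha_1,\alpha_\ell\geq 2$ and $\text{Cat}_\alpha$ is a legitimate caterpillar on $|\lambda|=n$ vertices. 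Since $\text{sort}(\alpha)=\lambda$, Corollary \ref{cor:catleading} shows the leading partition of $X_{\text{Cat}_\alpha}$ is $\lambda$.

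For the \emph{only if} direction, let $T$ be an $n$-vertex tree and let $F$ be the forest obtained by deleting all internal edges; by Theorem \ref{thm:leading}, $\lambda$ is the sorted list of component sizes of $F$. The first step is to describe these components: the only edges surviving in $F$ are leaf edges, and each leaf has a single neighbour, so every component of $F$ consists of exactly one internal vertex $v$ together with the leaves of $T$ adjacent to $v$, and its size is $1$ plus the number of leaf-neighbours of $v$. Hence a component has size $\geq 2$ exactly when its central internal vertex has a leaf-neighbour. If $T$ is a star, then $F=T$ is connected and $\lambda=(n)$. Otherwise $T$ is not a star, which forces its diameter to be $\geq 3$; taking a longest path $p_0 p_1\cdots p_d$, the endpoints $p_0,p_d$ are leaves and the \emph{distinct} vertices $p_1,p_{d-1}$ are internal and each adjacent to a leaf, so they lie in two distinct components of $F$ of size $\geq 2$, giving $\lambda_1\geq\lambda_2\geq 2$.

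I expect the main obstacle to be the necessity direction, specifically pinning down the component structure of $F$ and the claim that a non-star tree has at least two internal vertices possessing a leaf-neighbour. The longest-path argument is the cleanest route, but it requires first verifying the small supporting fact that a tree of diameter at most $2$ is a star (so that a non-star tree genuinely has diameter $\geq 3$, whence $p_1\neq p_{d-1}$). The remaining bookkeeping, namely that distinct internal vertices occupy distinct components and that no isolated vertex of $F$ can be a leaf, is routine once the leaf-edge description of $F$ is established.
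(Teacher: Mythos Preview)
Your proposal is correct and follows essentially the same approach as the paper: both directions use Corollary~\ref{cor:catleading} and Theorem~\ref{thm:leading} in the same way, with only cosmetic differences (the paper uses the caterpillar $\text{Cat}_{\lambda_2\lambda_3\cdots\lambda_\ell\lambda_1}$ rather than your $\text{Cat}_{\lambda_1\lambda_3\cdots\lambda_\ell\lambda_2}$, and for the converse it picks an internal edge $e$ and observes that each component of $T\setminus e$ contains a leaf of $T$, rather than using your longest-path argument to locate two internal vertices with leaf neighbours). Both routes to the converse are short and equivalent in strength.
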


\begin{proof}
The leading partition of $\mathfrak{st}_n$ is $n$ and if $\lambda_2\geq 2$, then $\lambda$ is the leading partition of $X_{\text{Cat}_{\lambda_2\lambda_3\cdots\lambda_\ell\lambda_1}}$. Conversely, if $T$ is a tree other than $\text{St}_n$, then it has at least one internal edge $e$ and both components of $T\setminus e$ have a leaf of $T$. When we delete the other internal edges of $T$, those leaves will be in a connected component of size at least $2$, so the leading partition $\lambda$ must satisfy $\lambda_2\geq 2$. 
\end{proof}

We can use leading partitions to show that the path $P_n$ is not a linear combination of other trees.

\begin{corollary}\label{cor:path}
The chromatic symmetric function $X_{P_n}$ does not lie in the space
\begin{equation}
\text{span}_{\mathbb Q}\{X_T: \ T \ n\text{-vertex tree}, \ T\neq P_n\}.
\end{equation}
\end{corollary}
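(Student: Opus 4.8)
The plan is to exploit the notion of leading partition from Theorem~\ref{thm:leading} together with the minimality of the path $P_n$ among its peers. The key observation is that the path $P_n$ is the caterpillar $\text{Cat}_\alpha$ with $\alpha = 21^{n-2}\cdots$; more precisely, $P_n$ has exactly two leaves (the two endpoints) and $(n-2)$ internal vertices of degree two. So when we delete all internal edges of $P_n$, the forest $F$ consists of the two end-edges (each a component of size $2$) together with $(n-4)$ isolated interior vertices, giving leading partition $\lambda = 221^{n-4}$ by Theorem~\ref{thm:leading}. The strategy is to show that $P_n$ is the \emph{unique} $n$-vertex tree with this particular leading partition, so that the coefficient $c_{221^{n-4}}$ serves to separate $X_{P_n}$ from every other $X_T$.

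First I would compute the leading partition of $P_n$ explicitly as $\lambda = 221^{n-4}$, using Theorem~\ref{thm:leading} applied to the forest $F$ obtained by deleting the $(n-3)$ internal edges of $P_n$. Next, I would argue that no other $n$-vertex tree $T \neq P_n$ has leading partition lexicographically less than or equal to $221^{n-4}$. Since the leading partition records the component sizes of the internal-edge-deletion forest in \emph{decreasing} order, a leading partition of $221^{n-4}$ or smaller forces the forest $F$ to have at most two components of size $\geq 2$, each of size exactly $2$, and all other components singletons. A tree whose internal-edge-deletion forest has this shape must consist of a path whose interior vertices carry no extra leaves---that is, exactly $P_n$. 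The remaining trees $T \neq P_n$ all have leading partition strictly lexicographically greater than $221^{n-4}$, meaning $c_{221^{n-4}} = 0$ for each such $T$, while $c_{221^{n-4}} \neq 0$ for $P_n$ since the leading coefficient is nonzero by definition.

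The conclusion then follows by the same linear-functional argument used in Corollary~\ref{cor:star}: the coefficient $c_{221^{n-4}}$ is a linear functional on $\Lambda^n$ that vanishes on every $X_T$ with $T \neq P_n$ but is nonzero on $X_{P_n}$, so $X_{P_n}$ cannot be a rational linear combination of the others. For small $n$ I would check the boundary cases separately: when $n = 3$ the path $P_3 = \text{St}_3$ and the claim reduces to Corollary~\ref{cor:star}, and when $n = 4$ the leading partition is $22$ and the same uniqueness argument applies directly.

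The main obstacle I expect is the uniqueness step---verifying rigorously that $P_n$ is the only $n$-vertex tree whose internal-edge-deletion forest yields component sizes at most $221^{n-4}$ in lexicographic order. One must rule out configurations where a single vertex carries many leaves (producing a large first part, hence lexicographically larger) as well as configurations with three or more size-$2$ components (which for a tree would require branching, again enlarging the partition). Making this combinatorial characterization airtight, rather than merely plausible, is where the real care is needed.
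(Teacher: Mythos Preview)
Your proposal is correct and follows essentially the same route as the paper: identify $221^{n-4}$ as the leading partition of $P_n$ via Theorem~\ref{thm:leading}, show it is strictly lexicographically minimal among $n$-vertex trees, and conclude that the linear functional $c_{221^{n-4}}$ separates $X_{P_n}$ from the others. For the uniqueness step you flag as the main obstacle, the paper's key observation is simply that any tree $T\neq P_n$ has at least three leaves; since each leaf sits in a component of the internal-edge-deletion forest of size $\geq 2$, the leading partition of such a $T$ either has a part $\geq 3$ (if two leaves share a non-leaf neighbour) or at least three parts equal to $2$, and in either case is lexicographically larger than $221^{n-4}$.
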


\begin{proof}
If $n=3$, the path $P_n$ is the only $n$-vertex tree, so suppose that $n\geq 4$. By Theorem \ref{thm:leading}, the leading partition for $P_n=\text{Cat}_{21^{n-4}2}$ is $\lambda=221^{n-4}$. All other trees $T$ have at least three leaves, so the forest obtained by deleting all internal edges has at least three connected components of size at least $2$. Therefore, the leading partition is lexicographically larger than $\lambda$ and $\mathfrak{st}_{221^{n-4}}$ does not appear in the star expansion of $X_T$.
\end{proof}

We now show a linear relation that is satisfied by almost all trees.

\begin{lemma}\label{lem:nearhookrelation}
Let $T$ be a connected $n$-vertex tree with $n\geq 5$ and let $X_T=\sum_\lambda c_\lambda\mathfrak{st}_\lambda$. Consider the sum of star coefficients
\begin{equation}
nh(T)=c_{221^{n-4}}+c_{321^{n-5}}+\cdots+c_{(n-3)21}+c_{(n-2)2}.
\end{equation}
Then we have
\begin{equation}\label{eq:nearhookrelation}
nh(T)=\begin{cases}
0,&\text{ if }T\neq \text{Cat}_{(n-2)2}, \ T\neq P_n,\\
1,&\text{ if }T=\text{Cat}_{(n-2)2},\\
(-1)^{n-1},&\text{ if }T=P_n.
\end{cases}
\end{equation}
\end{lemma}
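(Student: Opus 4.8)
The plan is to adapt the strategy of Lemma \ref{lem:hookrelation}, running a double induction on $n$ and on the number of internal edges of $T$ and evaluating $nh$ through the DNC relation \eqref{eq:dncrelation}. The base cases are the three $5$-vertex trees $P_5$, $\text{St}_5$, and $\text{Cat}_{32}$, which I would check by a direct star expansion; I would also record the $4$-vertex values $nh(P_4)=1$ and $nh(\text{St}_4)=0$ for use in the recursion. For the inductive step, suppose $T$ has an internal edge $e=\{u,v\}$, so that $T\setminus e$, $(T/e)^\circ$, and $(T/e)^\multimap$ are its DNC children; since $T$ is a tree, $T\setminus e$ is a disjoint union of two smaller trees, $T/e$ is a tree on $n-1$ vertices, and $(T/e)^\multimap$ is a tree on $n$ vertices with strictly fewer internal edges.

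First I would reduce to a clean three-term recursion. Adding an isolated vertex sends $\mathfrak{st}_\mu\mapsto\mathfrak{st}_{\mu\cup 1}$, and among the partitions $(k,2,1^{n-k-2})$ summed by $nh$, exactly those with a part equal to $1$ are hit; matching these against the $nh$-partitions on $n-1$ vertices gives the identity $nh((T/e)^\circ)=nh(T/e)$. Hence
\begin{equation*}
nh(T)=nh(T\setminus e)-nh(T/e)+nh((T/e)^\multimap),
\end{equation*}
where $nh(T/e)$ and $nh((T/e)^\multimap)$ are supplied by the two induction hypotheses.

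The key new computation is the disconnected term. Writing $T\setminus e=T_1\sqcup T_2$ with $|T_1|,|T_2|\geq 2$, the identity $\mathfrak{st}_\mu\mathfrak{st}_\nu=\mathfrak{st}_{\mu\cup\nu}$ gives $c_\sigma(T_1\sqcup T_2)=\sum_{\mu\cup\nu=\sigma}c_\mu(T_1)c_\nu(T_2)$. I would expand $nh(T_1\sqcup T_2)$ over all splittings of each $\sigma=(k,2,1^{n-k-2})$ and simplify using two vanishing facts: no connected graph has $c_{1^m}\neq 0$ (by Theorem \ref{thm:leading} the leading partition of a tree on $m\geq 2$ vertices has a part at least $2$, so the lex-minimal partition $1^m$ cannot occur), and no tree on $m\geq 3$ vertices is $2$-connected, so $c_{21^{m-2}}=0$ by Theorem \ref{thm:2con}. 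The first fact kills every splitting in which both non-$1$ parts of $\sigma$ land on the same side, and the surviving terms reorganize, after writing $\sum_{k\geq2}c_{k1^{|T_i|-k}}(T_i)=h(T_i)-c_{21^{|T_i|-2}}(T_i)$, into
\begin{equation*}
nh(T_1\sqcup T_2)=c_{21^{|T_1|-2}}(T_1)\,h(T_2)+c_{21^{|T_2|-2}}(T_2)\,h(T_1)-3\,c_{21^{|T_1|-2}}(T_1)\,c_{21^{|T_2|-2}}(T_2).
\end{equation*}
The second fact forces $c_{21^{|T_i|-2}}(T_i)=0$ when $|T_i|\geq 3$ and gives $c_{2}(K_2)=1$; since $n\geq 5$ rules out both components being $K_2$, we conclude $nh(T\setminus e)=0$ unless one component is $K_2$, in which case it equals $h$ of the other component, which by Lemma \ref{lem:hookrelation} is $1$ exactly when that component is $\text{St}_{n-2}$ and $0$ otherwise.

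With these pieces I would finish by tracking which DNC children are exceptional, exactly as in Lemma \ref{lem:hookrelation}: the three terms contribute nonzero amounts only through $T\setminus e\cong K_2\sqcup\text{St}_{n-2}$ and through $T/e,(T/e)^\multimap\in\{\text{Cat}_{(\,\cdot\,)2},P_{(\,\cdot\,)}\}$, and I would show these conditions are tightly linked so that for every $T\notin\{\text{Cat}_{(n-2)2},P_n\}$ the nonzero contributions cancel and $nh(T)=0$. For $T=\text{Cat}_{(n-2)2}$ the unique internal edge gives $T\setminus e=\text{St}_{n-2}\sqcup K_2$, $T/e=\text{St}_{n-1}$, and $(T/e)^\multimap=\text{St}_n$, so $nh(T)=1-0+0=1$. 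For $T=P_n$ with $n\geq 6$, taking $e$ to be the second spine edge gives $T\setminus e=K_2\sqcup P_{n-2}$ with $h(P_{n-2})=0$, $T/e=P_{n-1}$, and $(T/e)^\multimap=\text{Cat}_{31^{n-5}2}$ (non-exceptional, hence $0$), so $nh(P_n)=-nh(P_{n-1})$, which together with the base value $nh(P_5)=1$ telescopes to $(-1)^{n-1}$. I expect the main obstacle to be the disjoint-union bookkeeping of the third paragraph together with the sign tracking for $P_n$, where one must confirm that the boundary contributions at small $n$ reinforce the alternating value $(-1)^{n-1}$ rather than disrupt it.
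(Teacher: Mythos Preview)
Your proposal is correct and follows essentially the same approach as the paper: double induction via the DNC relation, with the same reduction $nh((T/e)^\circ)=nh(T/e)$, the same analysis of $nh(T\setminus e)$ via the vanishing of $c_{21^{m-2}}$ for trees on $m\geq 3$ vertices together with Lemma~\ref{lem:hookrelation}, and the same final case analysis of which children can be exceptional. Your explicit formula $nh(T_1\sqcup T_2)=B_1h(T_2)+B_2h(T_1)-3B_1B_2$ is a clean repackaging of what the paper computes directly; the only part you leave implicit is the short case check (for $T\in\{\text{Cat}_{(n-3)12},\text{Cat}_{2(n-4)2}\}$) where nonzero contributions from $T/e$ and $(T/e)^\multimap$ or $T\setminus e$ must be seen to cancel, which the paper spells out.
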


\begin{proof}
We use induction on $n$. If $n=5$, the only possibilities for $T$ are the star $\text{St}_5$, the caterpillar $\text{Cat}_{32}$, and the path $P_5$, and it is routine to check that \eqref{eq:nearhookrelation} holds. We also use induction on the number of internal edges of $T$. If $T$ has no internal edges, then $T=\text{St}_n$, $c_n=1$, and all the other $c_\lambda=0$, so $nh(\text{St}_n)=0$. Now suppose that $T$ has some internal edge $e=\{u,v\}$. By the DNC relation, we have
\begin{equation}\label{eq:nearhookinduction}
nh(T)=nh(T\setminus e)-nh((T/e)^\circ)+nh((T/e)^\multimap)=nh(T\setminus e)-nh(T/e)+nh((T/e)^\multimap).
\end{equation}
Our induction hypothesis allows us to calculate the right hand side of \eqref{eq:nearhookinduction}. We consider different cases, depending on whether $T\setminus e$, $T/e$, and $(T/e)^\multimap$ are paths or caterpillars. 

We first consider $T\setminus e$. Because $T$ is a tree and $e$ is an internal edge, $T\setminus e$ consists of trees $T_1$ and $T_2$ with some sizes $n_1\geq n_2\geq 2$. Suppose that $n_2\geq 3$. Now the trees $T_1$ and $T_2$ have at least three vertices and are not $2$-connected, so by Theorem \ref{thm:2con}, their star coefficients $c_{21^{n_1-2}}(T_1)$ and $c_{21^{n_2-2}}(T_2)$ are both $0$. Therefore, the star coefficients $c_{a21^{n-a-2}}(T\setminus e)$ are all $0$ and $nh(T\setminus e)=0$. If $n_2=2$, then we have
\begin{equation}
nh(T\setminus e)=c_{21^{n-6}}(T_1)+c_{31^{n-5}}(T_1)+\cdots+c_{n-2}(T_1)=h(T_1)-c_{21^{n-6}}(T_1).
\end{equation}
Because $T_1$ has at least three vertices and is not $2$-connected, we have $c_{21^{n-6}}(T_1)=0$ by Theorem \ref{thm:2con}. By Lemma \ref{lem:hookrelation}, we have $h(T_1)=0$ unless $T_1=\text{St}_{n-2}$, which can only occur if $T=\text{Cat}_{(n-2)2}$, or if $T=\text{Cat}_{(n-3)12}$ and $e$ is the internal edge where $T\setminus e=\text{St}_{(n-2)2}$. Therefore
\begin{equation}
nh(T\setminus e)=\begin{cases}
1,&\text{ if }T=\text{Cat}_{(n-2)2},\\
1,&\text{ if }T=\text{Cat}_{(n-3)12}\text{ and }T\setminus e=\text{St}_{(n-2)2},\\
0,&\text{ otherwise.}
\end{cases}
\end{equation}
We next consider the trees $T/e$ and $(T/e)^\multimap$. We have $T/e=P_{n-1}$ if and only if $T=P_n$. We have $T/e=\text{Cat}_{(n-3)2}$ if and only if $T$ is either $\text{Cat}_{2(n-4)2}$ or $\text{Cat}_{(n-3)12}$. Because $e$ is an internal edge of $T$, the contracted vertex $uv$ of $(T/e)^\multimap$ has degree at least $3$ so $(T/e)^\multimap$ is never $P_n$. We have $(T/e)^\multimap=\text{Cat}_{(n-2)2}$ if and only if either $T=\text{Cat}_{2(n-4)2}$, or $T=\text{Cat}_{(n-3)12}$ and $e$ is the internal edge where $T\setminus e=\text{St}_{(n-3)3}$. Putting this together, we have
\begin{equation}
nh(T)=\begin{cases}
1-0+0=1,&\text{ if }T=\text{Cat}_{(n-2)2},\\
0-(-1)^{n-2}+0=(-1)^{n-1},&\text{ if }T=P_n,\\
1-1+0=0,&\text{ if }T=\text{Cat}_{(n-3)12}\text{ and }T\setminus e=\text{St}_{(n-2)2},\\
0-1+1=0,&\text{ if }T=\text{Cat}_{(n-3)12}\text{ and }T\setminus e=\text{St}_{(n-3)3},\\
0-1+1=0,&\text{ if }T=\text{Cat}_{2(n-4)2},\\
0-0+0=0,&\text{ otherwise,}
\end{cases}
\end{equation}
as desired.
\end{proof}

\begin{corollary}\label{cor:cat}
The chromatic symmetric function $X_{\text{Cat}_{(n-2)2}}$ does not lie in the space
\begin{equation}
\text{span}_{\mathbb Q}\{X_T: \ T \ n\text{-vertex tree}, T\neq \text{Cat}_{(n-2)2}\}.
\end{equation}
\end{corollary}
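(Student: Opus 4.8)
The plan is to combine two linear functionals on $\mathcal T_n$: the near-hook sum $nh$ from Lemma \ref{lem:nearhookrelation}, together with the single star coefficient $c_{221^{n-4}}$. On its own, $nh$ does not separate $\text{Cat}_{(n-2)2}$ from the remaining trees, since Lemma \ref{lem:nearhookrelation} also gives $nh(P_n)=(-1)^{n-1}\neq 0$; the key idea is to first use $c_{221^{n-4}}$ to eliminate the contribution of $P_n$ and then apply $nh$ to reach a contradiction. Since Lemma \ref{lem:nearhookrelation} requires $n\geq 5$, I would first dispose of the small cases: for $n=4$ one has $\text{Cat}_{22}=P_4$, so the statement is exactly Corollary \ref{cor:path}, and $n=3$ is degenerate. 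Thus we may assume $n\geq 5$.

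Next I would record the values of both functionals on the relevant trees. From the leading-partition analysis behind Corollary \ref{cor:path} (via Theorem \ref{thm:leading}), the partition $221^{n-4}$ is strictly lexicographically smaller than the leading partition of every $n$-vertex tree other than $P_n$, so $c_{221^{n-4}}(T)=0$ for all trees $T\neq P_n$, whereas $c_{221^{n-4}}(P_n)\neq 0$ because it is the leading coefficient. In particular, by Corollary \ref{cor:catleading} the leading partition of $\text{Cat}_{(n-2)2}$ is $(n-2)2$, whose first part $n-2\geq 3$ exceeds $2$ for $n\geq 5$, so $c_{221^{n-4}}(\text{Cat}_{(n-2)2})=0$ as well.

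Now suppose for contradiction that $X_{\text{Cat}_{(n-2)2}}=\sum_i a_i X_{T_i}$ is a linear combination of chromatic symmetric functions of $n$-vertex trees $T_i\neq\text{Cat}_{(n-2)2}$, and let $a$ denote the coefficient of $X_{P_n}$ in this sum (with $a=0$ if $P_n$ does not appear). Applying the functional $c_{221^{n-4}}$ to both sides and using the vanishing just recorded kills every term except the $P_n$ term, giving $0=a\cdot c_{221^{n-4}}(P_n)$, hence $a=0$. Then applying $nh$ and invoking Lemma \ref{lem:nearhookrelation}, every term on the right vanishes except possibly the $P_n$ term, so the right-hand side equals $a\cdot(-1)^{n-1}=0$, while the left-hand side is $nh(\text{Cat}_{(n-2)2})=1$. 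This contradiction proves the claim.

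The main obstacle is precisely that $nh$ does not isolate $\text{Cat}_{(n-2)2}$ by itself; the crux is recognizing that $P_n$ is the \emph{only} other tree with $nh\neq 0$ and that it can be cleanly removed beforehand using the leading coefficient $c_{221^{n-4}}$, which vanishes on every tree except $P_n$. Once this two-functional strategy is in place, the remaining steps are routine verifications using Lemma \ref{lem:nearhookrelation} and the leading-partition results.
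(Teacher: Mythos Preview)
Your proof is correct and follows essentially the same approach as the paper's. The paper first argues that the coefficient $a$ of $X_{P_n}$ must vanish by invoking Corollary \ref{cor:path} (otherwise one could solve for $X_{P_n}$ in terms of other trees), and then applies $nh$ to get a contradiction; you instead apply the functional $c_{221^{n-4}}$ directly to force $a=0$, which is precisely the content of the proof of Corollary \ref{cor:path}, so the two arguments are the same in substance with yours being slightly more self-contained.
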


\begin{proof}
Suppose that there are coefficients $a,a_T\in\mathbb Q$ for which
\begin{equation}
X_{\text{Cat}_{(n-2)2}}=aX_{P_n}+\sum_{T\neq P_n}a_TX_T.
\end{equation}
By Corollary \ref{cor:path}, we cannot write the path $P_n$ as a linear combination of other trees, so we must have $a=0$. But now by Lemma \ref{lem:nearhookrelation}, we have $nh(\text{Cat}_{(n-2)2})=1$, while the corresponding star coefficient sum on the right hand side is $0$.
\end{proof}

We have seen that the star $\text{St}_n$, the path $P_n$, and the caterpillar $\text{Cat}_{(n-2)2}$ cannot be written as linear combinations of other trees. We will show in Theorem \ref{thm:treecoloops} that for $n\geq 8$, these are the \emph{only} trees with this property. We first find bases for the vector space
\begin{equation}
\mathcal T_n=\text{span}_{\mathbb Q}\{X_T: \ T \ n\text{-vertex tree}\}.
\end{equation}

Gonzalez, Orellana, and Tomba found an elegant description of $\mathcal T_n$ using the star basis.

\begin{lemma}\label{lem:catbasis}
\cite[Proposition 6.9]{chromstar} The set of chromatic symmetric functions of caterpillars
\begin{equation}\label{eq:catbasis}
B=\{X_{\text{Cat}_{\lambda_2\lambda_3\cdots\lambda_\ell\lambda_1}}: \ \lambda=\lambda_1\lambda_2\cdots\lambda_\ell\vdash n, \text{ either }\lambda=n\text{ or } \lambda_2\geq 2\}
\end{equation}
is linearly independent.
\end{lemma}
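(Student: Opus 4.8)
The plan is to exploit the fact, recorded in Corollary \ref{cor:catleading}, that the caterpillars appearing in $B$ have \emph{pairwise distinct leading partitions}, and to deduce linear independence from a triangularity argument. The index set of $B$ is the collection of partitions $\lambda\vdash n$ with either $\lambda=n$ or $\lambda_2\geq 2$; for each such $\lambda$ the composition $\alpha=\lambda_2\lambda_3\cdots\lambda_\ell\lambda_1$ is a rearrangement of the parts of $\lambda$ with $\alpha_1=\lambda_2\geq 2$ and $\alpha_\ell=\lambda_1\geq\lambda_2\geq 2$, so $\text{Cat}_\alpha$ is a well-defined caterpillar (degenerating to $\text{St}_n$ when $\lambda=n$). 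By Corollary \ref{cor:catleading}, the leading partition of $X_{\text{Cat}_\alpha}$ is $\text{sort}(\alpha)=\lambda$, so the assignment $\lambda\mapsto X_{\text{Cat}_{\lambda_2\cdots\lambda_\ell\lambda_1}}$ sends distinct valid $\lambda$ to chromatic symmetric functions with distinct leading partitions, namely $\lambda$ itself.

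First I would record the resulting triangular structure in the star basis. Writing $X_{\text{Cat}_{\lambda_2\cdots\lambda_\ell\lambda_1}}=\sum_\mu c_\mu^{(\lambda)}\mathfrak{st}_\mu$, the definition of leading partition together with Corollary \ref{cor:catleading} gives $c_\mu^{(\lambda)}=0$ whenever $\mu<_{\text{lex}}\lambda$, together with $c_\lambda^{(\lambda)}\neq 0$. Now suppose toward a contradiction that $\sum_\lambda a_\lambda X_{\text{Cat}_{\lambda_2\cdots\lambda_\ell\lambda_1}}=0$ with some $a_\lambda\neq 0$, the sum running over valid $\lambda$. Let $\mu$ be the lexicographically smallest partition with $a_\mu\neq 0$, and extract the coefficient of $\mathfrak{st}_\mu$ from this relation. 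Every caterpillar indexed by $\lambda\neq\mu$ with $a_\lambda\neq 0$ has leading partition $\lambda>_{\text{lex}}\mu$ and so contributes $c_\mu^{(\lambda)}=0$, while the single caterpillar indexed by $\mu$ contributes $a_\mu c_\mu^{(\mu)}\neq 0$. Hence the coefficient of $\mathfrak{st}_\mu$ on the left-hand side is $a_\mu c_\mu^{(\mu)}\neq 0$, contradicting that the sum vanishes. This proves that $B$ is linearly independent.

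I do not expect a substantial obstacle: once Corollary \ref{cor:catleading} is in hand, the whole argument is the standard observation that vectors with distinct leading terms (with nonzero leading coefficients) in a fixed basis are linearly independent. The only points needing care are bookkeeping: verifying that $\text{sort}(\lambda_2\cdots\lambda_\ell\lambda_1)=\lambda$ so that the assigned leading partition really is $\lambda$, checking that the boundary conditions $\alpha_1,\alpha_\ell\geq 2$ required to define $\text{Cat}_\alpha$ are guaranteed by $\lambda_2\geq 2$, and fixing the direction of the lexicographic order consistently (leading $=$ lex-smallest) so the triangularity points the right way. The degenerate index $\lambda=n$, corresponding to $\text{St}_n$ with leading partition $n$, fits the same scheme and needs no separate treatment.
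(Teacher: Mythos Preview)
Your proposal is correct and takes the same approach as the paper: invoke Corollary \ref{cor:catleading} to see that the caterpillars in $B$ have pairwise distinct leading partitions, then conclude linear independence by triangularity in the star basis. The paper states this in a single sentence, while you have written out the standard triangularity argument in detail, but the content is identical.
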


\begin{proof}
By Corollary \ref{cor:catleading}, these caterpillars have distinct leading partitions.
\end{proof}

They also proved the following relation on star coefficients with a given length.

\begin{definition}
Let $G$ be an $n$-vertex graph and let $X_G=\sum_\lambda c_\lambda\mathfrak{st}_\lambda$. Fix an integer $1\leq\ell\leq n$. We define $\sigma_\ell(G)$ to be the sum of star coefficients
\begin{equation}
\sigma_\ell(G)=\sum_{\lambda: \ \ell(\lambda)=\ell}c_\lambda.
\end{equation}
\end{definition}

\begin{lemma}\label{lem:lengthrelation}
\cite[Lemma 6.10]{chromstar} If $T$ is a tree and $\ell\geq 2$, then $\sigma_\ell(T)=0$.
\end{lemma}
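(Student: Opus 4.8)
The plan is to recover the sums $\sigma_\ell(T)$ from the chromatic polynomial by specializing the star expansion. Recall that $\chi_G(t)=X_G(\underbrace{1,\ldots,1}_{t\text{ ones}},0,\ldots)$ and that this principal specialization is a ring homomorphism $\Lambda^n\to\mathbb{Q}[t]$; in particular it is multiplicative over disjoint unions, so it sends a product of chromatic symmetric functions to the product of the corresponding chromatic polynomials. Since $\text{St}_\lambda$ is the disjoint union of the trees $\text{St}_{\lambda_1},\ldots,\text{St}_{\lambda_{\ell(\lambda)}}$ and every $k$-vertex tree has chromatic polynomial $t(t-1)^{k-1}$, the first step is to record that
\begin{equation*}
\mathfrak{st}_\lambda\big(\underbrace{1,\ldots,1}_{t\text{ ones}},0,\ldots\big)=\chi_{\text{St}_\lambda}(t)=\prod_{i=1}^{\ell(\lambda)}t(t-1)^{\lambda_i-1}=t^{\ell(\lambda)}(t-1)^{n-\ell(\lambda)}.
\end{equation*}

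Next I would apply this specialization to $X_T=\sum_\lambda c_\lambda\mathfrak{st}_\lambda$ and group the partitions $\lambda$ by their number of parts $\ell(\lambda)$. Since the specialization of $X_T$ at $t$ ones equals $\chi_T(t)$ for every positive integer $t$, and both sides are values of the same symmetric function, we obtain the identity of polynomials
\begin{equation*}
\chi_T(t)=\sum_{\ell=1}^{n}\sigma_\ell(T)\,t^{\ell}(t-1)^{n-\ell}.
\end{equation*}
For a tree we have $\chi_T(t)=t(t-1)^{n-1}$, which is precisely the $\ell=1$ summand $t^{1}(t-1)^{n-1}$ on the right-hand side.

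It then remains to observe that the expansion of $\chi_T(t)$ in the polynomials $q_\ell(t)=t^{\ell}(t-1)^{n-\ell}$, for $1\le\ell\le n$, is unique, i.e.\ that the $q_\ell$ are linearly independent. This is quick: $q_\ell$ vanishes to order exactly $\ell$ at $t=0$, so in any linear combination $\sum_\ell a_\ell q_\ell$ the term of smallest index $\ell_0$ with $a_{\ell_0}\neq 0$ contributes a nonzero monomial $a_{\ell_0}(-1)^{n-\ell_0}t^{\ell_0}$ that no higher-index term can cancel. Hence $\sum_\ell a_\ell q_\ell=0$ forces all $a_\ell=0$. Matching $\chi_T(t)=t(t-1)^{n-1}$ against $\sum_\ell\sigma_\ell(T)q_\ell(t)$ therefore forces $\sigma_1(T)=1$ and $\sigma_\ell(T)=0$ for every $\ell\ge 2$, which gives the claim (and the bonus value $\sigma_1(T)=c_n=1$).

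The whole argument is really a change of basis in a one-variable polynomial ring, so I do not expect a serious obstacle; the only point deserving care is the linear independence of the $q_\ell$, and the order-of-vanishing observation at $t=0$ disposes of it cleanly. I would also remark that the displayed identity $\chi_G(t)=\sum_\ell\sigma_\ell(G)\,t^\ell(t-1)^{n-\ell}$ holds for an arbitrary graph $G$, so all of the input specific to trees is concentrated in the single fact $\chi_T(t)=t(t-1)^{n-1}$; no analysis of the DNC tree is needed here.
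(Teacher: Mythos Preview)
Your argument is correct. The key identity $\chi_G(t)=\sum_{\ell}\sigma_\ell(G)\,t^{\ell}(t-1)^{n-\ell}$ follows exactly as you say from the multiplicativity of the principal specialization and $\chi_{\text{St}_k}(t)=t(t-1)^{k-1}$, and your order-of-vanishing check at $t=0$ cleanly establishes the linear independence of the $q_\ell$. One cosmetic quibble: $\Lambda^n$ itself is not a ring, so it is better to say the specialization is a ring homomorphism $\Lambda\to\mathbb Q[t]$ restricted to degree $n$; nothing in the argument is affected.

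As for comparison, this paper does not supply its own proof of the lemma---it simply quotes the result from \cite{chromstar}. The surrounding arguments in this paper (for instance Lemma~\ref{lem:unilength} on $\sigma_\ell$ for unicyclic graphs) proceed instead by induction via the DNC relation, and presumably the cited source argues similarly. Your route is genuinely different and, for this particular statement, more conceptual: it isolates exactly the tree-specific input as the single equality $\chi_T(t)=t(t-1)^{n-1}$ and avoids any graph-by-graph induction. The trade-off is that the DNC approach generalizes uniformly to graphs with more edges (as the paper exploits for unicyclic and bicyclic graphs), whereas your method would require knowing $\chi_G(t)$ explicitly, which one typically does not for richer families.
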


\begin{corollary}\label{cor:catbasis}
We have that
\begin{equation}\label{eq:treespan}
\mathcal T_n=\left\{\sum_\lambda a_\lambda\mathfrak{st}_\lambda: \ a_\lambda\in\mathbb Q, \ \sum_{\ell(\lambda)=\ell}a_\lambda=0\text{ for every }2\leq\ell\leq n\right\}.
\end{equation}
Moreover, the set $B$ in \eqref{eq:catbasis} is a basis for $\mathcal T_n$. 
\end{corollary}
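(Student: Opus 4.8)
The plan is to prove both statements at once by a dimension count. Write $V_n$ for the right-hand side of \eqref{eq:treespan}, the subspace of $\Lambda^n$ cut out by the linear conditions $\sum_{\ell(\lambda)=\ell}a_\lambda=0$ for $2\leq\ell\leq n$. First I would establish the inclusion $\mathcal T_n\subseteq V_n$: by Lemma \ref{lem:lengthrelation}, every $n$-vertex tree $T$ satisfies $\sigma_\ell(T)=\sum_{\ell(\lambda)=\ell}c_\lambda=0$ for all $\ell\geq 2$, so each generator $X_T$ of $\mathcal T_n$ lies in $V_n$; since $V_n$ is a linear subspace, the entire span $\mathcal T_n$ does too. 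The set $B$ of \eqref{eq:catbasis} consists of chromatic symmetric functions of trees (caterpillars), so we also have $B\subseteq\mathcal T_n\subseteq V_n$. The goal is then to show that $B$ is a basis of $V_n$; since $B$ is already linearly independent by Lemma \ref{lem:catbasis}, it suffices to check that $|B|=\dim V_n$, which will simultaneously force $\mathcal T_n=V_n$ and exhibit $B$ as a basis for $\mathcal T_n$.

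Next I would compute both numbers. The defining equations of $V_n$ have pairwise disjoint supports, since each partition has a single length, and each equation is nontrivial, since for every $2\leq\ell\leq n$ there is at least one partition of $n$ into exactly $\ell$ parts, for instance $(n-\ell+1,1^{\ell-1})$. Hence these $n-1$ equations are linearly independent and $\dim V_n=p(n)-(n-1)$, where $p(n)=\dim\Lambda^n$ is the number of partitions of $n$. For $|B|$, the caterpillars in $B$ have distinct leading partitions by Corollary \ref{cor:catleading}, so $|B|$ equals the number of partitions $\lambda\vdash n$ with $\lambda=n$ or $\lambda_2\geq 2$. I would count the complement: a partition fails both conditions exactly when it has at least two parts and $\lambda_2=1$, that is, when it is a hook $(k,1^{n-k})$ with $1\leq k\leq n-1$. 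There are precisely $n-1$ such hooks, so $|B|=p(n)-(n-1)=\dim V_n$.

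Finally, combining these observations, $B$ is a linearly independent subset of $V_n$ of cardinality $\dim V_n$, hence a basis for $V_n$. Because $\text{span}_{\mathbb Q}(B)\subseteq\mathcal T_n\subseteq V_n$ while $\text{span}_{\mathbb Q}(B)=V_n$, all three spaces coincide, proving both \eqref{eq:treespan} and that $B$ is a basis of $\mathcal T_n$. The only genuinely delicate point is the bookkeeping behind the dimension count: I must verify both that the $n-1$ length-constraints are independent, which is immediate from their disjoint supports, and the combinatorial identity that the partitions with $\lambda=n$ or $\lambda_2\geq 2$ number exactly $p(n)-(n-1)$. Everything else is assembled from results already established, so the argument amounts to matching two dimensions rather than performing a new construction.
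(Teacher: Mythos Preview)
Your argument is correct and follows essentially the same route as the paper's proof: establish $\mathcal T_n\subseteq V_n$ via Lemma~\ref{lem:lengthrelation}, then use linear independence of $B$ together with the equality $|B|=p(n)-(n-1)=\dim V_n$ to force all three spaces to coincide. The only difference is that you spell out the dimension computations (independence of the length constraints, counting the complementary hooks) that the paper states in a single line.
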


\begin{proof}
Let $\mathcal T_n'$ be the right hand side of \eqref{eq:treespan}. The inclusion $\mathcal T_n\subseteq\mathcal T_n'$ holds by Lemma \ref{lem:lengthrelation}. By Lemma \ref{lem:catbasis}, we have that $\dim\mathcal T_n\geq |B|=p(n)-n+1=\dim\mathcal T'_n$, where $p(n)$ is the number of integer partitions of size $n$. Therefore $\mathcal T_n=\mathcal T_n'$ and $B$ is a basis for $\mathcal T_n$.
\end{proof}

We will also need the following lemma for the caterpillars $\text{Cat}_{a1^kb}$ and $\text{Cat}_{aaa}$, which are tricky to handle because they are the only trees with their leading partition. The proof is a lengthy calculation and not too enlightening so we will postpone it to the end of this section.

\begin{lemma}\hspace{0pt}
\label{lem:ab1k}
\begin{enumerate}
\item Fix integers $a\geq b\geq 2$ and $k\geq 0$ with $a\geq 3$, $a+b+k\geq 8$, and either $b\geq 3$ or $k\geq 1$. The caterpillar $X_{\text{Cat}_{a1^kb}}$ can be written as a linear combination of other trees. 
\item Fix an integer $a\geq 3$. The caterpillar $X_{\text{Cat}_{aaa}}$ can be written as a linear combination of other trees.
\end{enumerate}
\end{lemma}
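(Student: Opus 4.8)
The plan is to exploit that the caterpillars in \eqref{eq:catbasis} form a basis $B$ of $\mathcal T_n$ (Corollary \ref{cor:catbasis}); index its elements as $X_{C_\mu}$, where $C_\mu$ is the caterpillar with leading partition $\mu$ (Corollary \ref{cor:catleading}). Let $C$ be the target caterpillar and let $\lambda=(a,b,1^k)$ (respectively $\lambda=(a,a,a)$) be its leading partition, so $C=C_\lambda\in B$. A preliminary observation, which is exactly why these caterpillars are delicate, is that $C$ is the \emph{only} tree with leading partition $\lambda$: by Theorem \ref{thm:leading} the parts of $\lambda$ are the component sizes of the forest obtained by deleting all internal edges, a size-$1$ component is an internal vertex with no leaf neighbour and hence cannot be a leaf of the tree of internal vertices (else it would be a leaf of $T$), so the leaves of that internal tree lie among the parts of size $\geq 2$. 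In the first case $\lambda$ has only two such parts, forcing at most two leaves; in the second case the internal tree has just three vertices. Either way it is a path, and $T$ is forced to equal $C$. Writing $\phi_\lambda$ for the coordinate functional on $\mathcal T_n$ dual to $X_{C_\lambda}$ in the basis $B$, the reduction I will use is
\begin{equation*}
X_C\in\text{span}_{\mathbb Q}\{X_T:\ T\text{ a tree},\ T\neq C\}\quad\Longleftrightarrow\quad \phi_\lambda(X_T)\neq 0\text{ for some tree }T\neq C.
\end{equation*}
The forward direction follows by applying $\phi_\lambda$; for the converse, if $\phi_\lambda(X_T)=d\neq 0$ and $X_T=\sum_\mu d_\mu X_{C_\mu}$ with $d_\lambda=d$, then $X_C=\tfrac1d(X_T-\sum_{\mu\neq\lambda}d_\mu X_{C_\mu})$ realises $X_C$ through trees all different from $C$.

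The problem therefore reduces to producing a single tree $T\neq C$ with $\phi_\lambda(X_T)\neq 0$. By triangularity of $B$ with respect to the star basis, $\phi_\lambda$ annihilates every $\mathfrak{st}_\nu$ with $\nu>_{\mathrm{lex}}\lambda$ and sends $\mathfrak{st}_\lambda$ to $1/c_\lambda(C_\lambda)$, so $\phi_\lambda(X_T)$ is assembled only from the star coefficients $c_\nu(T)$ with $\nu\leq_{\mathrm{lex}}\lambda$. In particular, since $C$ is the unique tree with leading partition $\lambda$, any useful $T$ must have leading partition strictly smaller than $\lambda$ in lexicographic order. I would take $T$ to be an explicit small modification of $C$ that strictly lowers the leading partition, the exact move depending mildly on the case: move one leaf to a neighbouring spine vertex, as in $\text{Cat}_{a1^{k-1}2(b-1)}$ when $b\geq 3$ and $k\geq 1$ or $\text{Cat}_{(a-1)1^{k-1}22}$ when $b=2$ and $k\geq 1$; subdivide a spine edge when $k=0$, as in $\text{Cat}_{a(b-2)2}$; and $\text{Cat}_{aa(a-2)2}$ for $C=\text{Cat}_{aaa}$ (switching to a small spider if a caterpillar should fail). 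One then computes $\phi_\lambda(X_T)$ by running the DNC relation \eqref{eq:dncrelation} on $T$ down to star forests and inverting the triangular change of basis between $\{\mathfrak{st}_\mu\}$ and $B$.

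The hard part will be this last step: evaluating $\phi_\lambda(X_T)$ and verifying that it is nonzero, which is precisely where the hypotheses enter. The conditions $a\geq 3$, $a+b+k\geq 8$, and ``$b\geq 3$ or $k\geq 1$'' are what guarantee both that a valid perturbation $T$ of strictly smaller leading partition exists and that the extracted coefficient does not vanish. The boundary caterpillars excluded from the statement, namely the path $P_n$ (the case $a=b=2$) and $\text{Cat}_{(n-2)2}$ (the case $b=2$, $k=0$), are exactly those for which \emph{every} tree $T\neq C$ has $\phi_\lambda(X_T)=0$, in agreement with Corollaries \ref{cor:path} and \ref{cor:cat}. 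I expect the coefficient extraction itself to be a lengthy but routine bookkeeping of star coefficients, which is why the authors defer it to the end of the section.
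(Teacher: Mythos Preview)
Your reduction via the dual functional $\phi_\lambda$ is correct and amounts to the same starting point as the paper: it suffices to produce an element of $\text{span}_{\mathbb Q}\{X_T:T\neq C\}$ on which $\phi_\lambda$ is nonzero, equivalently one whose leading partition is exactly $\lambda$. The difference lies in how this element is found, and here your plan is both unexecuted and more laborious than what the paper actually does. You propose to pick a single perturbed tree $T$ and compute $\phi_\lambda(X_T)$ by expanding $X_T$ in the star basis and then inverting the triangular change of basis to $B$. That inversion is genuine extra work: $\phi_\lambda(X_T)$ depends on \emph{all} star coefficients $c_\nu(T)$ with $\nu\leq_{\text{lex}}\lambda$ together with the corresponding entries of the inverse transition matrix, and there is no a priori reason your specific candidates give a nonzero answer (as you implicitly concede with ``switching to a small spider if a caterpillar should fail'').

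The paper avoids the inversion entirely. In each of five cases it writes down an explicit linear combination $L$ of a handful of trees, none equal to $C$, applies one or two levels of the DNC relation to each summand, and chooses the combination so that all terms with leading partition strictly below $\lambda$ cancel. What remains visibly has leading partition $\lambda$ (checked term-by-term via Corollary~\ref{cor:catleading}), and this immediately forces $\phi_\lambda(L)\neq 0$ without touching any change-of-basis matrix. For instance, in your case $b\geq 3$, $k\geq 1$, the paper takes $L=X_{\text{Cat}_{(b-1)a1^{k-1}2}}-X_{\text{Cat}_{a1^{k-1}2(b-1)}}$; one DNC step on each tree cancels the common $\mathfrak{st}_{b-1}$-factor, leaving leading piece $X_{\text{Cat}_{a1^{k-1}b}}\mathfrak{st}_1$. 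Your candidate $\text{Cat}_{a1^{k-1}2(b-1)}$ is in fact one of the two trees in this difference, but on its own its $\phi_\lambda$-coordinate is not obviously nonzero. So the idea you are missing is this cancellation device: rather than extracting a dual coordinate from a single tree, engineer a short combination of trees so that the leading partition is forced to equal $\lambda$.
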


\begin{theorem}\label{thm:treecoloops}
Let $n\geq 8$ and let $T$ be an $n$-vertex tree other than $\text{St}_n$, $P_n$, and $\text{Cat}_{(n-2)2}$. Then $X_T$ can be written as a linear combination of other trees.
\end{theorem}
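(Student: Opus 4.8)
The plan is to leverage the caterpillar basis $B$ of Corollary~\ref{cor:catbasis} together with the leading-partition triangularity, reducing the statement to a classification of the partitions that are the leading partition of a \emph{unique} tree. For a tree $T$ write $\mu(T)$ for its leading partition; for an admissible $\lambda$ (one with $\lambda=n$ or $\lambda_2\geq 2$) let $C_\lambda=\text{Cat}_{\lambda_2\cdots\lambda_\ell\lambda_1}\in B$, which has leading partition $\lambda$ by Corollary~\ref{cor:catleading}. The three excluded graphs are exactly the admissible caterpillars $C_{(n)}=\text{St}_n$, $C_{221^{n-4}}=P_n$, and $C_{(n-2)2}=\text{Cat}_{(n-2)2}$.

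First I would set up the triangularity. Since each $X_{C_\lambda}$ has $\mathfrak{st}$-leading partition $\lambda$, a standard argument shows that expanding any tree $T$ in the basis $B$ gives
\begin{equation*}
X_T=\sum_{\lambda\succeq\mu(T)}d_\lambda X_{C_\lambda},\qquad d_{\mu(T)}\neq 0,
\end{equation*}
where $\succeq$ is the lexicographic order. If $T$ is \emph{not} equal as a graph to $C_{\mu(T)}$, then every caterpillar on the right differs from $T$ (those with $\lambda\succ\mu(T)$ have a different leading partition, and $C_{\mu(T)}\neq T$ by assumption), so $X_T$ is a combination of other trees. This settles every tree except the basis caterpillars themselves. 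For a basis caterpillar $C_\mu$ that is \emph{not} the unique tree with leading partition $\mu$, I pick another tree $T'$ with $\mu(T')=\mu$; applying the display to $T'$ (legitimate since $T'\neq C_\mu$) and solving for $X_{C_\mu}$ writes $X_{C_\mu}$ as a combination of $X_{T'}$ and the $X_{C_\lambda}$ with $\lambda\succ\mu$, all trees other than $C_\mu$.

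It remains to classify the caterpillars $C_\mu$ that are the unique tree realizing $\mu$; this is the heart of the argument. By Theorem~\ref{thm:leading}, a tree realizing $\mu$ is built from a ``trunk'' (its subtree of internal vertices) by attaching leaves: deleting the internal edges leaves one component per internal vertex, namely that vertex together with its adjacent leaves. Let $r$ be the number of parts of $\mu$ that are $\geq 2$ and $k$ the number equal to $1$; then the trunk has $r+k$ vertices, the $r$ large parts sit on internal vertices carrying at least one leaf, the $k$ unit parts on internal vertices carrying none, and crucially every leaf of the trunk must carry a leaf. I would then analyze by $r$: if $r=1$ then $k=0$ and $T=\text{St}_n$; if $r=2$ the trunk has exactly two leaves, hence is a path carrying the two large parts at its ends, forcing $T=\text{Cat}_{a1^kb}$; and if $r\geq 3$ the realization is never unique except when $r=3$, $k=0$ with all three large parts equal, giving $T=\text{Cat}_{aaa}$. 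The non-uniqueness for the remaining $r\geq 3$ cases is the main obstacle: when $r+k\geq 4$ I exhibit both a caterpillar (path trunk) and a non-caterpillar realization (a trunk with a vertex of degree $\geq 3$, decorating its three leaves and its interior vertices with the large and unit parts), and when $r=3$, $k=0$ with the large parts not all equal I exhibit two caterpillars differing in their central part.

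Finally I would assemble the pieces. For $n\geq 8$ the unique-realization trees are precisely $\text{St}_n$, the caterpillars $\text{Cat}_{a1^kb}$ with $a\geq b\geq 2$ and $a+b+k=n$, and the caterpillars $\text{Cat}_{aaa}$ with $3a=n$. Removing the three excluded graphs $\text{St}_n$, $P_n=\text{Cat}_{21^{n-4}2}$ (the case $a=b=2$), and $\text{Cat}_{(n-2)2}$ (the case $a=n-2$, $b=2$, $k=0$), every remaining unique-realization caterpillar is covered by Lemma~\ref{lem:ab1k}: those of the form $\text{Cat}_{a1^kb}$ with $a\geq 3$ and ($b\geq 3$ or $k\geq 1$) by part~(1), and those of the form $\text{Cat}_{aaa}$ with $a\geq 3$ by part~(2). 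Hence every $n$-vertex tree other than $\text{St}_n$, $P_n$, and $\text{Cat}_{(n-2)2}$ is a linear combination of other trees, as claimed. The principal difficulty throughout is the unique-realization classification, specifically verifying that a non-caterpillar (or a second caterpillar) realization can always be constructed whenever $r\geq 3$ outside the $\text{Cat}_{aaa}$ family.
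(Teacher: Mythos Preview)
Your proposal is correct and follows essentially the same approach as the paper: both reduce to the basis caterpillars $C_\lambda$, use leading-partition triangularity to replace $C_\lambda$ by another tree $T'$ with the same leading partition whenever such a $T'$ exists, and invoke Lemma~\ref{lem:ab1k} for precisely the residual families $\text{Cat}_{a1^kb}$ and $\text{Cat}_{aaa}$. The only difference is organizational---you phrase the main step as a classification of partitions with a unique realizing tree, whereas the paper constructs the alternative tree $T'$ directly in each case---but the case analysis (by the number $r$ of parts $\geq 2$ and the number $k$ of parts equal to $1$) and the invocations of Lemma~\ref{lem:ab1k} line up exactly.
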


\begin{proof}
By Corollary \ref{cor:catbasis}, the caterpillars in \eqref{eq:catbasis} form a basis for $\mathcal T_n$, so we need only consider consider the case where $T$ is one of these caterpillars. Let $T=\text{Cat}_\alpha$ and $\lambda=\text{sort}(\alpha)$. Because $T\neq\text{St}_n$, we have $\lambda_2\geq 2$. Our strategy will be to find another tree $T'$ with the same leading partition $\lambda$. Then the set $B'=B\setminus\{X_T\}\cup\{X_{T'}\}$ will be linearly independent because the leading partitions are distinct. Because $|B'|=|B|=\dim\mathcal T_n$, the set $B'$ will be a basis for $\mathcal T_n$, and we can write $X_T$ as a linear combination of trees in $B'$, as desired. 

Let $t\geq 2$ be maximal with $\lambda_t\geq 2$, so that $\lambda$ is of the form $\lambda_1\cdots\lambda_t1^k$ for some $k\geq 0$. First suppose that $t\geq 4$ and $k=0$. In this case, we can take $T'$ to be the tree obtained from the star $\text{St}_t$ by attaching $(\lambda_t-1)$ leaves to the central vertex and attaching $(\lambda_i-1)$ leaves to the $i$-th leaf of $\text{St}_t$ for $1\leq i\leq t-1$. Then $X_{T'}$ has leading partition $\lambda$ and $T'$ is not a caterpillar because its non-leaves form the star $\text{St}_t$, not a single path. 

The same construction works if $t=3$ and $k\geq 1$. If $t\geq 4$ and $k\geq 1$, we can take $T'$ to be the tree obtained from the caterpillar $\text{Cat}_{(t-1)1^{k-1}2}$, attaching $(\lambda_i-1)$ leaves to the $i$-th leaf of the degree-$(t-1)$ vertex for $1\leq i\leq t-2$, and attaching $(\lambda_{t-1}-1)$ leaves to the other leaf of $\text{Cat}_{(t-1)1^{k-1}2}$. Again, $X_{T'}$ has leading partition $\lambda$ and $T'$ is not a caterpillar because its non-leaves form the caterpillar $\text{Cat}_{(t-1)1^{k-1}2}$, not a single path.

Now suppose that $t=3$ and $k=0$, so $T=\text{Cat}_{\lambda_2\lambda_3\lambda_1}$. If $\lambda_1\neq\lambda_3$, then the tree $T'=\text{Cat}_{\lambda_2\lambda_1\lambda_3}$ also has leading partition $\lambda$ and we are done as before. If $\lambda_1=\lambda_3$, then $T=\text{Cat}_{aaa}$ for some $a\geq 3$ and $X_T$ can be written as a linear combination of other trees by Part (2) of Lemma \ref{lem:ab1k}.

Finally, if $t=2$, then $T=\text{Cat}_{a1^kb}$ for some $a\geq b\geq 2$ and $k\geq 0$. Because $T\neq P_n$, we must have $a\geq 3$, and because $T\neq\text{Cat}_{(n-2)2}$, we have either $b\geq 3$ or $k\geq 1$. Therefore $X_T$ can be written as a linear combination of other trees by Part (1) of Lemma \ref{lem:ab1k}. 
\end{proof}

\begin{remark}
We can also use computer calculations to describe the cases of $n\leq 7$. When $n\leq 5$, the only trees that are not linear combinations of others are still $P_n$, $\text{Cat}_{(n-2)2}$, and $\text{St}_n$, although these can coincide (for example, $P_3=\text{St}_3$ and $P_4=\text{Cat}_{22}$). When $n=6$, all six trees are linearly independent because they have distinct leading partitions. For $n=7$, the trees that are not linear combinations of others are $P_7$, $\text{St}_7$, $\text{Cat}_{52}$, and $\text{Cat}_{43}$.  
\end{remark}

We now return to the vector space spanned by all connected graphs
\begin{equation}
\mathcal C_n=\text{span}_{\mathbb Q}\{X_G: \ G\text{ connected }n\text{-vertex graph}\}.
\end{equation}

We can extend a basis of $\mathcal T_n$ to a basis of $\mathcal C_n$ by adding certain connected \emph{unicyclic} graphs, which are graphs with a single cycle. 

\begin{definition}
Let $c\geq 3$ and $\ell\geq 0$. The \emph{cuttlefish} $\text{Cut}_{c,\ell}$ is the $(c+\ell)$-vertex graph formed by taking a cycle of $c$ vertices and attaching $\ell$ leaves to a fixed vertex $v$.
\end{definition}

Gonzalez, Orellana, and Tomba also identified the leading partition for connected unicyclic graphs.

\begin{theorem}\cite[Theorem 5.6]{chromunicyclic} 
Let $G$ be a connected unicyclic graph that consists of a tree attached to a vertex $v$ of a cycle. Let $e$ be an edge of the cycle incident to $v$ and consider the tree $T=G\setminus e$. Then the leading partition of $X_T$ is the same as that of $X_G$.
\end{theorem}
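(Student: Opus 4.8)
The plan is to induct on the length $c$ of the cycle, using the deletion-near-contraction relation at $e$ together with the principle recorded in Section 2 that for a \emph{connected} graph $H$ the coefficient $c_\lambda(H)$ is nonzero if and only if there is a path from $H$ to $\text{St}_\lambda$ in some DNC tree (there is no cancellation). Thus for connected $H$ the leading partition of $X_H$ is precisely the lexicographically smallest star forest reachable from $H$. Writing $G$ as a cycle $v=w_0,w_1,\ldots,w_{c-1}$ with a nontrivial tree $T_0$ attached at $v$, take $e=\{v,w_1\}$; since $T_0$ is nontrivial, $e$ is an internal edge, so $T=G\setminus e$, $(G/e)^\circ$, and $(G/e)^\multimap$ are the three children of $G$ in a DNC tree, and every star forest reachable from $G$ is reachable from one of them. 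Let $\lambda^\ast$ denote the leading partition of $X_T$. Since $\lambda^\ast$ is reachable from $T$, hence from $G$, we get the easy direction: the leading partition of $X_G$ is lexicographically at most $\lambda^\ast$.

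The reverse inequality is the heart of the matter, and it requires showing that neither $(G/e)^\circ$ nor $(G/e)^\multimap$ reaches any partition strictly smaller than $\lambda^\ast$. First I would compute $\lambda^\ast$ via Theorem \ref{thm:leading}. Here the nontrivial-tree hypothesis is essential: it forces $\deg_T(v)\geq 2$, so $v$ is not a leaf of $T$, the edge $vw_{c-1}$ is internal, and the broken cycle detaches as a pendant path contributing one part equal to $2$ and $(c-3)$ parts equal to $1$; the remaining parts come from $T_0$, namely the star around $v$ (of size one more than its number of leaf-neighbours) together with a partition $\nu$ recording the rest of $T_0$. Without this hypothesis the statement genuinely fails (for instance the leading partition of $X_{C_4}$ is $211$, whereas that of $X_{P_4}$ is $22$), so the proof must pinpoint exactly where the degree of $v$ enters. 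I would then note that $G/e$ and $(G/e)^\multimap$ are again cycles of length $c-1$ carrying the same nontrivial tree $T_0$, respectively $T_0$ with one extra leaf at the cycle vertex, and that the partition $\nu$ is unaffected by these local modifications.

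Applying the induction hypothesis to $G/e$ and $(G/e)^\multimap$ (whose cycles are shorter), their leading partitions equal those of the trees obtained by cutting a cycle edge at the attachment vertex, which I evaluate again with Theorem \ref{thm:leading}. A direct comparison then shows that the leading partition of $X_{(G/e)^\multimap}$ is obtained from $\lambda^\ast$ by merging two parts into their sum, while the lexicographically smallest forest reachable from $(G/e)^\circ$, namely the leading partition of $X_{G/e}$ with a part $1$ appended, either equals $\lambda^\ast$ or is obtained from it by moving a single cell to a larger part. Both operations are dominance-increasing, and since $\mu\trianglerighteq\lambda$ implies $\mu$ is lexicographically at least $\lambda$, both contraction branches reach only partitions that are lexicographically at least $\lambda^\ast$. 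Together with the easy direction this gives that the leading partition of $X_G$ equals $\lambda^\ast$, as desired. The base case $c=3$ is handled directly: there $G/e$ and $(G/e)^\multimap$ are already trees, and the very same dominance comparison applies.

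The step I expect to be the main obstacle is the leading-partition comparison in the third paragraph: one must verify, uniformly in $c$ and in the shape of $T_0$, that contracting $e$ and appending an isolated vertex (respectively a pendant leaf) can only reorganize the parts of $\lambda^\ast$ in a dominance-nondecreasing way. This is delicate precisely because it is false for a bare cycle, so the bookkeeping of how the pendant path and the star around $v$ recombine under contraction must be carried out carefully, and the nontriviality of $T_0$ must be invoked at exactly the right point to keep $v$ from becoming a leaf.
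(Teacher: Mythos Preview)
The paper does not contain a proof of this statement: it is quoted verbatim from \cite[Theorem~5.6]{chromunicyclic} and used only as a black box to deduce Corollary~\ref{cor:cutleading}. There is therefore nothing in the present paper to compare your proposal against.

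That said, your outline is a reasonable attack and correctly isolates the one genuine subtlety. Your observation that the statement fails for a bare cycle (e.g.\ $C_4$ versus $P_4$) is right, and it shows that the hypothesis ``a tree attached to $v$'' must be read as a \emph{nontrivial} tree; indeed, the corollary that follows only uses the case $\ell\geq 1$ of the cuttlefish explicitly via this theorem. Your inductive scheme (DNC at $e$, reduce both contraction branches to shorter cycles with the same attached-tree structure, and compare leading partitions via Theorem~\ref{thm:leading}) is the natural one. The point you flag as the main obstacle---checking that the leading partitions of $G/e$ (with a $1$ appended) and of $(G/e)^\multimap$ dominate $\lambda^\ast$---is exactly where the work lies, and your description of what happens (a part absorbs a $1$, respectively two parts merge) is correct once you track that contracting $e$ merges the star around $v$ with the first vertex of the pendant path. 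One small caution: the ``no cancellation'' principle from Section~2 is stated for connected graphs, so you are right to route the $(G/e)^\circ$ branch through $G/e$ rather than invoking it directly on the disconnected graph.
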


\begin{corollary}\label{cor:cutleading}
The cuttlefish chromatic symmetric function $X_{\text{Cut}_{c,n-c}}$ has leading partition $\lambda=(n-c+1)21^{c-3}$.
\end{corollary}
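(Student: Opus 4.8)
The plan is to delete a single cycle edge incident to the distinguished vertex $v$, invoke the theorem of \cite{chromunicyclic} quoted immediately above to replace the cuttlefish by a tree with the same leading partition, and then recognize that tree as a caterpillar whose leading partition is handed to us by Corollary \ref{cor:catleading}.

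First I would fix coordinates. Write the cycle of $\text{Cut}_{c,n-c}$ as $v=u_1,u_2,\ldots,u_c$ with edges $\{u_i,u_{i+1}\}$ (indices modulo $c$), and let $w_1,\ldots,w_{n-c}$ be the leaves attached to $v=u_1$. Choose $e=\{u_c,u_1\}$, which is an edge of the cycle incident to $v$, and set $T=\text{Cut}_{c,n-c}\setminus e$. By the quoted Theorem \cite[Theorem 5.6]{chromunicyclic}, the leading partition of $X_T$ coincides with that of $X_{\text{Cut}_{c,n-c}}$, so it suffices to determine the leading partition of $X_T$.

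Next I would identify $T$ explicitly. Deleting $e$ leaves the path $u_1-u_2-\cdots-u_c$ together with the $n-c$ pendant leaves still attached to $u_1$. Its spine, i.e.\ the set of non-leaf vertices, is $u_1,u_2,\ldots,u_{c-1}$: the endpoint $u_1$ carries the $n-c$ leaves $w_j$, the interior vertices $u_2,\ldots,u_{c-2}$ carry none, and $u_{c-1}$ carries the single leaf $u_c$. Reading the part sizes along the spine from $u_1$ to $u_{c-1}$ therefore gives the composition $\alpha=(n-c+1)\,1^{c-3}\,2$, so that $T=\text{Cat}_\alpha$. By Corollary \ref{cor:catleading}, the leading partition of $X_T$ is $\text{sort}(\alpha)=(n-c+1)21^{c-3}=\lambda$, which completes the argument.

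Since the content is essentially bookkeeping, I do not expect a genuine obstacle; the only point requiring care is confirming the caterpillar structure of $T$ and reading off $\alpha$ correctly, in particular that the far cycle vertex $u_c$ becomes a single pendant leaf (contributing the part $2$ via $u_{c-1}$) while the interior path vertices $u_2,\ldots,u_{c-2}$ contribute the $c-3$ parts equal to $1$. I would also note the boundary case $c=3$, where there are no parts equal to $1$ and $\alpha=(n-c+1)2$, and record the standing assumption $n>c$ (at least one leaf), which guarantees $n-c+1\geq 2$ so that $\text{sort}(\alpha)$ genuinely has $(n-c+1)$ as its largest part.
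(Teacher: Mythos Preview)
Your argument is correct and is precisely the derivation the paper leaves implicit: the corollary is placed immediately after the cited theorem from \cite{chromunicyclic} with no separate proof, and identifying $G\setminus e$ as $\text{Cat}_{(n-c+1)1^{c-3}2}$ and then invoking Corollary~\ref{cor:catleading} is the intended one-line deduction. Your caution about the boundary $n=c$ is well placed, since in that degenerate case the quoted theorem does not literally apply (deleting an edge of $C_n$ yields $P_n$, whose leading partition is $221^{n-4}$); the claimed partition $21^{n-2}$ for the bare cycle is instead justified by Theorem~\ref{thm:2con}.
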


\begin{lemma}\label{lem:cutbasis}
The set of chromatic symmetric functions of cuttlefish
\begin{equation}\label{eq:cutbasis}
B'=\{X_{\text{Cut}_{c,n-c}}: \ 3\leq c\leq n\}
\end{equation}
is linearly independent.
\end{lemma}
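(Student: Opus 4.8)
The plan is to mirror the proof of Lemma \ref{lem:catbasis}, deducing independence from leading partitions, but with one extra ingredient to resolve a collision. By Corollary \ref{cor:cutleading}, for $3\leq c\leq n-1$ the cuttlefish $\text{Cut}_{c,n-c}$ has leading partition $(n-c+1)21^{c-3}$, and as $c$ ranges over $3,\ldots,n-1$ the first parts $n-c+1$ take the distinct values $n-2,n-3,\ldots,2$. Hence these $n-3$ leading partitions are pairwise distinct, and exactly as in Lemma \ref{lem:catbasis} the functions $\{X_{\text{Cut}_{c,n-c}}:\ 3\leq c\leq n-1\}$ are linearly independent.

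The main (and essentially only) obstacle is the remaining member $X_{\text{Cut}_{n,0}}=X_{C_n}$: the cycle $C_n=\text{Cut}_{n,0}$ and the cuttlefish $\text{Cut}_{n-1,1}$ both reduce to $P_n$ after deleting a cycle edge, so both have leading partition $221^{n-4}$, and the leading-partition bookkeeping cannot separate them. To break this tie I would appeal to Theorem \ref{thm:2con}. Among all the graphs in $B'$, the cycle $C_n$ is the only one that is $2$-connected, since every $\text{Cut}_{c,n-c}$ with $c\leq n-1$ carries at least one leaf and therefore has minimum degree $1$. Consequently $c_{21^{n-2}}(C_n)\neq 0$ by Theorem \ref{thm:2con}, while $c_{21^{n-2}}(\text{Cut}_{c,n-c})=0$ for every $3\leq c\leq n-1$.

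To conclude, suppose $\sum_{c=3}^n a_cX_{\text{Cut}_{c,n-c}}=0$. Extracting the coefficient of $\mathfrak{st}_{21^{n-2}}$ and using the previous paragraph, only the $c=n$ term survives, so $a_n\,c_{21^{n-2}}(C_n)=0$ and hence $a_n=0$. The relation then collapses to $\sum_{c=3}^{n-1}a_cX_{\text{Cut}_{c,n-c}}=0$, and the linear independence established in the first paragraph forces every remaining $a_c=0$. The novelty beyond Lemma \ref{lem:catbasis} is exactly this use of the $2$-connectivity criterion of Theorem \ref{thm:2con} to resolve the one leading-partition collision; a minor boundary point to dispatch separately is small $n$ (for instance $n=3$, where $B'=\{X_{K_3}\}$ is trivially independent and the partition $221^{n-4}$ is not yet meaningful).
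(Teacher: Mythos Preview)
Your argument is correct, but the ``collision'' you describe does not actually occur. The leading partition of $C_n=\text{Cut}_{n,0}$ is $21^{n-2}$, not $221^{n-4}$: your own appeal to Theorem \ref{thm:2con} gives $c_{21^{n-2}}(C_n)\neq 0$, and since $c_{1^n}=0$ for any connected graph, $21^{n-2}$ is already the lexicographically smallest partition appearing in $X_{C_n}$. The reasoning ``$C_n\setminus e=P_n$, hence leading partition $221^{n-4}$'' misapplies the unicyclic theorem preceding Corollary \ref{cor:cutleading}, whose hypothesis requires a nontrivial tree attached to the cycle; for the bare cycle that conclusion fails.

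With this correction, the paper's proof is the single sentence you begin with: by Corollary \ref{cor:cutleading} the cuttlefish $\text{Cut}_{c,n-c}$ for $3\leq c\leq n$ have pairwise distinct leading partitions $(n-c+1)21^{c-3}$ (for $c=n$ this sorts to $21^{n-2}$), and linear independence follows exactly as in Lemma \ref{lem:catbasis}. Your separate treatment of $c=n$ via Theorem \ref{thm:2con} is not wrong---it effectively supplies the $c=n$ case of Corollary \ref{cor:cutleading}---but the two-step structure and the claimed tie are unnecessary once you see there is no collision to resolve.
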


\begin{proof}
By Corollary \ref{cor:cutleading}, these cuttlefish have distinct leading partitions.
\end{proof}

In order to show that cuttlefish are independent from caterpillars, we will use the following relation for the sum of star coefficients in a given length.

\begin{lemma}\label{lem:unilength} Let $G$ be a connected unicyclic graph with a cycle of length $c$ and fix an integer $1\leq\ell\leq n$. Then we have
\begin{equation}
\sigma_\ell(G)=(-1)^{\ell-1}\binom{c-1}\ell.
\end{equation}
\end{lemma}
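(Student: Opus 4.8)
The plan is to prove the formula $\sigma_\ell(G)=(-1)^{\ell-1}\binom{c-1}{\ell}$ by induction, using the deletion-near-contraction relation exactly as in the proofs of Lemma \ref{lem:hookrelation} and Lemma \ref{lem:nearhookrelation}. Since $G$ is unicyclic and connected, it consists of a cycle of length $c$ with trees hanging off of it. First I would establish the base case by reducing to a graph with no trees attached, or more directly by handling the cycle $C_c$ itself: for the star-expansion of a pure cycle, one can compute $\sigma_\ell(C_c)$ and check it equals $(-1)^{\ell-1}\binom{c-1}{\ell}$. The key structural input is that $\sigma_\ell$ is a linear functional on $X_G$, so it behaves well under the DNC relation \eqref{eq:dncrelation}.

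**The inductive step via DNC.**
For the induction, I would choose an internal edge $e=\{u,v\}$ and apply $\sigma_\ell$ to \eqref{eq:dncrelation}, giving
\begin{equation*}
\sigma_\ell(G)=\sigma_\ell(G\setminus e)-\sigma_\ell((G/e)^\circ)+\sigma_\ell((G/e)^\multimap).
\end{equation*}
The essential observation is how $\sigma_\ell$ interacts with the dot-contraction: adding an isolated vertex to a graph $H$ shifts every partition $\mu$ in the star-expansion to $\mu\cup 1$, increasing its length by one, so $\sigma_\ell((G/e)^\circ)=\sigma_{\ell-1}(G/e)$. This is the same reindexing that appears implicitly in the earlier DNC-based proofs. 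The natural edge to delete is an edge $e$ on the cycle: then $G\setminus e$ is a tree (so by Lemma \ref{lem:lengthrelation}, $\sigma_\ell(G\setminus e)=0$ for $\ell\geq 2$, and equals the appropriate value for $\ell=1$), while $G/e$ is unicyclic with a cycle of length $c-1$ and $(G/e)^\multimap$ is unicyclic with a cycle of length $c-1$ as well. Applying the induction hypothesis to these two smaller-cycle graphs, the recurrence becomes
\begin{equation*}
\sigma_\ell(G)=0-(-1)^{\ell-2}\binom{c-2}{\ell-1}+(-1)^{\ell-1}\binom{c-2}{\ell}=(-1)^{\ell-1}\left[\binom{c-2}{\ell-1}+\binom{c-2}{\ell}\right]=(-1)^{\ell-1}\binom{c-1}{\ell},
\end{equation*}
using Pascal's identity, which is exactly the claimed formula.

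**The main obstacle.**
The hard part will be verifying the structural claims about the contractions of a cycle edge, namely that contracting a cycle edge yields a unicyclic graph with cycle length $c-1$ and that the leaf-contraction does as well (rather than collapsing the cycle or creating an extra component), while correctly handling the multiple-edge replacement convention when $c=3$. I would need to check carefully that the trees attached to the cycle do not interfere with this reduction, and that the base case $\ell=1$ is consistent: the total sum $\sigma_1(G)=\binom{c-1}{1}=c-1$ should be independently confirmed, perhaps by noting that $\sigma_1(G)$ counts (with the DNC sign convention) the coefficient contributions landing in length-one partitions. A secondary subtlety is ensuring the induction is well-founded: contracting a cycle edge strictly decreases $c$, so the induction should be on $c$ (with the tree case $c$ undefined or handled by Lemma \ref{lem:lengthrelation} as the effective base), and I must confirm that every connected unicyclic graph with $c\geq 4$ indeed has a cycle edge that is internal so that the DNC relation applies nontrivially.
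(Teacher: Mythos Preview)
Your proposal is correct and follows essentially the same route as the paper: induct on $c$, apply the DNC relation to a cycle edge so that $G\setminus e$ is a tree (handled by Lemma~\ref{lem:lengthrelation}) while $G/e$ and $(G/e)^\multimap$ are unicyclic with cycle length $c-1$, use the shift $\sigma_\ell((G/e)^\circ)=\sigma_{\ell-1}(G/e)$, and finish with Pascal's identity. The paper takes $c=3$ as the base case (where the multi-edge convention makes $G/e$ and $(G/e)^\multimap$ trees, exactly the subtlety you flagged) and treats $\ell=1$ separately, just as you anticipated; your worry about cycle edges being internal is unfounded since every cycle vertex has degree at least two.
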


\begin{proof}
We use induction on $c\geq 3$ and the DNC relation. First suppose that $c=3$. If $T$ is a tree, then $\sigma_\ell(T)=0$ if $\ell\geq 2$ by Lemma \ref{lem:lengthrelation} and $\sigma_1(T)=c_n(T)=1$ by \cite[Section 3]{chromstar}. Let $e$ be an edge of the cycle of $G$ and note that $G\setminus e$, $G/e$, and $(G/e)^\multimap$ are trees. Then we have
\begin{align*}
\sigma_\ell(G)&=\sigma_\ell(G\setminus e)-\sigma_\ell((G/e)^\circ)+\sigma_\ell((G/e)^\multimap)=\sigma_\ell(G\setminus e)-\sigma_{\ell-1}(G/e)+\sigma_\ell((G/e)^\multimap)\\\nonumber&=\left(\begin{cases}1-0+1=2,&\text{ if }\ell=1,\\
0-1+0=-1,&\text{ if }\ell=2,\\
0,&\text{ if }\ell\geq 3,\end{cases}\right)=(-1)^{\ell-1}\binom{3-1}\ell.
\end{align*} Similarly, for $c\geq 4$, if $e$ is an edge of the cycle, then $G\setminus e$ is a tree, while $(G/e)$ and $(G/e)^\multimap$ are connected unicyclic graphs with a cycle of length $(c-1)$. Therefore, by induction, we have for $\ell\geq 2$ that
\begin{align*}
\sigma_\ell(G)&=\sigma_\ell(G\setminus e)-\sigma_\ell((G/e)^\circ)+\sigma_\ell((G/e)^\multimap)=\sigma_\ell(G\setminus e)-\sigma_{\ell-1}(G/e)+\sigma_\ell((G/e)^\multimap)\\\nonumber&=0-(-1)^{\ell-2}\binom{c-2}{\ell-1}+(-1)^{\ell-1}\binom{c-2}\ell=(-1)^{\ell-1}\binom{c-1}\ell.
\end{align*}
Similarly, for $\ell=1$, we have $\sigma_1(G)=\sigma_1(G\setminus e)+\sigma_1((G/e)^\multimap)=1+(c-2)=c-1$.
\end{proof}

\begin{remark}
This shows that a connected unicyclic graph $G$ with a cycle of length $c$ has $c_n=c-1$. Therefore, the size of the cycle can be read off from the star expansion of $G$.
\end{remark}

\begin{remark}
If $G$ is \emph{bicyclic}, meaning it has exactly two cycles, a similar relation on $\sigma_\ell(G)$ can be used to determine the cycle lengths from the star expansion of $X_G$ \cite[Theorem 7.6]{chromunicyclic}.
\end{remark}

\begin{corollary}\label{cor:catcutind}
The set of chromatic symmetric functions $B\cup B'$ is linearly independent, where $B$ is given in \eqref{eq:catbasis} and $B'$ is given in \eqref{eq:cutbasis}.
\end{corollary}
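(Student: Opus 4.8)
The plan is to separate the two families by means of the length sums $\sigma_\ell$, which are linear functionals on $\Lambda^n$ since $\{\mathfrak{st}_\lambda\}$ is a basis. Suppose we have a putative linear dependence
\begin{equation*}
\sum_{X_T\in B}a_TX_T+\sum_{c=3}^nb_cX_{\text{Cut}_{c,n-c}}=0
\end{equation*}
with rational coefficients $a_T$ and $b_c$. First I would apply $\sigma_\ell$ to this relation for each fixed $\ell\geq 2$. By Lemma \ref{lem:lengthrelation}, every tree term contributes $\sigma_\ell(T)=0$, so only the cuttlefish survive. Since $\text{Cut}_{c,n-c}$ is unicyclic with a cycle of length $c$, Lemma \ref{lem:unilength} gives $\sigma_\ell(\text{Cut}_{c,n-c})=(-1)^{\ell-1}\binom{c-1}{\ell}$. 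Cancelling the common sign $(-1)^{\ell-1}$, this yields the system
\begin{equation*}
\sum_{c=3}^nb_c\binom{c-1}{\ell}=0\qquad\text{for all }2\leq\ell\leq n-1,
\end{equation*}
which is $n-2$ equations in the $n-2$ unknowns $b_3,\ldots,b_n$.

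Next I would solve this system by back-substitution, exploiting its triangular shape. Because $\binom{c-1}{\ell}=0$ whenever $\ell>c-1$, the equation indexed by $\ell$ involves only the coefficients $b_c$ with $c\geq\ell+1$, and on the ``diagonal'' we have $\binom{c-1}{c-1}=1$. Hence the equation for $\ell=n-1$ reads $b_n\binom{n-1}{n-1}=b_n=0$; feeding $b_n=0$ into the equation for $\ell=n-2$ leaves $b_{n-1}\binom{n-2}{n-2}=b_{n-1}=0$; and continuing downward to $\ell=2$ forces every $b_c=0$. Once the cuttlefish coefficients all vanish, the dependence collapses to $\sum_{X_T\in B}a_TX_T=0$, and since $B$ is a basis of $\mathcal T_n$ by Corollary \ref{cor:catbasis}, every $a_T=0$ as well. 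This establishes that $B\cup B'$ is linearly independent.

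The subtle point that motivates this whole approach is the separation step. A direct leading-partition argument, of the kind used to prove Lemmas \ref{lem:catbasis} and \ref{lem:cutbasis} individually, does \emph{not} suffice for the union, because by Corollary \ref{cor:cutleading} the cuttlefish $\text{Cut}_{3,n-3}$ has leading partition $(n-2)2$, which coincides with the leading partition of the caterpillar $\text{Cat}_{(n-2)2}$ already in $B$. Routing the argument through the functionals $\sigma_\ell$ circumvents this collision: the trees are annihilated wholesale while the cuttlefish produce a system governed by binomial coefficients. I expect the back-substitution to be the main technical step, but it is genuinely routine given the vanishing of $\binom{c-1}{\ell}$ above the diagonal, so the only real content is the observation that $\sigma_\ell$ cleanly isolates the unicyclic contributions.
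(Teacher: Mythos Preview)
Your proof is correct and follows essentially the same approach as the paper: both apply the length functionals $\sigma_\ell$ to annihilate the tree terms (via Lemma~\ref{lem:lengthrelation}) and then exploit the triangular shape of the binomial matrix $\binom{c-1}{\ell}$ coming from Lemma~\ref{lem:unilength} to force all cuttlefish coefficients to vanish. The only cosmetic difference is that the paper phrases the triangular step as a contradiction at the maximal index $c'$ with $b_{c'}\neq 0$, whereas you carry out the full back-substitution; also, citing Lemma~\ref{lem:catbasis} rather than Corollary~\ref{cor:catbasis} for the independence of $B$ would be marginally cleaner.
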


\begin{proof}
Suppose that we have a nontrivial linear dependence
\begin{equation}
\sum_{c=3}^na_cX_{\text{Cut}_{c,n-c}}+\sum_{\lambda: \ \lambda_2\geq 2}a_\lambda X_{\text{Cat}_{\lambda_2\lambda_3\cdots\lambda_\ell\lambda_1}}+a\mathfrak{st}_n=0.
\end{equation}
By Lemma \ref{lem:catbasis}, the caterpillars are linearly independent, so we must have $a_{c'}\neq 0$ for some maximal $3\leq c'\leq n$. But now by Lemma \ref{lem:lengthrelation} and Lemma \ref{lem:unilength}, we have $\sigma_{c'}(T)=0$ for every caterpillar $T$, $\sigma_{c'}(\text{Cut}_{c,n-c})=0$ for $c<c'$, and $\sigma_{c'}(\text{Cut}_{c',n-c'})\neq 0$. Therefore, by taking the sum of star coefficients of length $c'$ on each side, we get that $a_{c'}=0$, a contradiction. 
\end{proof}

\begin{corollary}\label{cor:catcutbasis}
Let $n\geq 2$. We have that
\begin{equation}\label{eq:connectedspan}
\mathcal C_n=\left\{\sum_\lambda a_\lambda\mathfrak{st}_\lambda: \ a_\lambda\in\mathbb Q, \ a_{1^n}=0\right\}.
\end{equation}
Moreover, the set $B\cup B'$ is a basis for $\mathcal C_n$, where $B$ is given in \eqref{eq:catbasis} and $B'$ is given in \eqref{eq:cutbasis}.
\end{corollary}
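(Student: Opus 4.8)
The plan is to compare $\mathcal C_n$ with the subspace on the right-hand side of \eqref{eq:connectedspan}, which I will call $\mathcal C_n'=\{\sum_\lambda a_\lambda\mathfrak{st}_\lambda:\ a_{1^n}=0\}$. Since $\{\mathfrak{st}_\lambda:\ \lambda\vdash n\}$ is a basis of $\Lambda^n$, the single linear condition $a_{1^n}=0$ shows that $\dim\mathcal C_n'=p(n)-1$. I would then establish equality by a dimension argument: first show the inclusion $\mathcal C_n\subseteq\mathcal C_n'$, that is, that $c_{1^n}=0$ for every connected $G$, and then show $\dim\mathcal C_n\geq p(n)-1$ using the explicit independent family $B\cup B'$. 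Together these force $\mathcal C_n=\mathcal C_n'$ and make $B\cup B'$ a basis.

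For the inclusion, the key claim is that if $G$ is a connected graph on $n\geq 2$ vertices, then the coefficient $c_{1^n}$ of $\mathfrak{st}_{1^n}$ in $X_G$ vanishes; equivalently, no DNC tree for $G$ has a path from $G$ to the edgeless star forest $\text{St}_{1^n}$. I would prove this by induction on $n$, and for fixed $n$ by induction on the number of internal edges, mirroring the argument of Theorem \ref{thm:2con}. If $G$ has no internal edge then $G=\text{St}_n$ and $c_{1^n}=0$ since $n\geq 2$. Otherwise pick an internal edge $e$ and examine the three children $G\setminus e$, $(G/e)^\circ$, $(G/e)^\multimap$. The graph $(G/e)^\multimap$ is connected with the same number of vertices but fewer internal edges, so the inner induction gives no path to $\text{St}_{1^n}$; a path through $(G/e)^\circ$ would restrict to a path from the connected graph $G/e$ to $\text{St}_{1^{n-1}}$, which is excluded by the induction on $n$. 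The only genuinely new case is $G\setminus e$: since $e$ is internal, $G\setminus e$ has no isolated vertices, so if it is disconnected its components $G_1,\ldots,G_r$ each have at least two vertices. Here I would use that the star basis is multiplicative, $X_{G_1\sqcup\cdots\sqcup G_r}=\prod_i X_{G_i}$ with $\mathfrak{st}_\mu\mathfrak{st}_\nu=\mathfrak{st}_{\mu\cup\nu}$, so the coefficient of $\mathfrak{st}_{1^n}$ in the product is $\prod_i c_{1^{n_i}}(G_i)$, which vanishes because each connected factor satisfies $c_{1^{n_i}}(G_i)=0$ by induction. I expect this multiplicative bookkeeping in the disconnected-deletion case to be the main obstacle, as it is the one place the argument departs from the single-graph induction of Theorem \ref{thm:2con}.

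Finally, for the dimension bound I would observe that every element of $B\cup B'$ is the chromatic symmetric function of a connected $n$-vertex graph (caterpillars and the star $\text{St}_n$ for $B$, cuttlefish for $B'$), so $B\cup B'\subseteq\mathcal C_n$. By Corollary \ref{cor:catbasis} we have $|B|=p(n)-n+1$, and $B'$ consists of the $n-2$ cuttlefish $\text{Cut}_{c,n-c}$ for $3\leq c\leq n$, so $|B\cup B'|=(p(n)-n+1)+(n-2)=p(n)-1$. Since $B\cup B'$ is linearly independent by Corollary \ref{cor:catcutind}, we conclude $\dim\mathcal C_n\geq p(n)-1=\dim\mathcal C_n'$. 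Combined with $\mathcal C_n\subseteq\mathcal C_n'$, this yields $\mathcal C_n=\mathcal C_n'$, and the independent set $B\cup B'$ of size $\dim\mathcal C_n$ is therefore a basis.
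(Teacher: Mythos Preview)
Your proposal is correct and follows the same overall strategy as the paper: show $\mathcal C_n\subseteq\mathcal C_n'$ by proving $c_{1^n}=0$ for connected $G$, then use the linear independence of $B\cup B'$ from Corollary~\ref{cor:catcutind} together with the cardinality count $|B|+|B'|=(p(n)-n+1)+(n-2)=p(n)-1$ to force equality and conclude that $B\cup B'$ is a basis. The only difference is cosmetic: where you spell out the vanishing of $c_{1^n}$ via an explicit induction (handling the disconnected deletion case with the multiplicativity of the star basis), the paper dispatches it in one line by invoking the earlier observation that the DNC relation is only ever applied to internal edges, so the edgeless star forest $\text{St}_{1^n}$ can never appear as a leaf of a DNC tree for a connected graph on $n\geq 2$ vertices.
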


\begin{proof}
Let $\mathcal C'_n$ be the right hand side of \eqref{eq:connectedspan}. The inclusion $\mathcal C_n\subseteq\mathcal C'_n$ holds because when we use the DNC relation to calculate the star expansion of $X_G$, we only apply it to internal edges, so we can never get the star $\text{St}_{1^n}$ in a DNC tree. By Corollary \ref{cor:catcutind}, the set $B\cup B'$ is linearly independent, so we have that \begin{equation}\dim\mathcal C_n\geq |B|+|B'|=(p(n)-n+1)+(n-2)=p(n)-1=\dim\mathcal C'_n.\end{equation}
Therefore $\mathcal C_n=\mathcal C'_n$ and $B\cup B'$ is a basis for $\mathcal C_n$.
\end{proof}

The following lemma will give us some useful linear relations on cuttlefish.

\begin{lemma}\label{lem:cutrelations}\hspace{0pt}
\begin{enumerate}
\item Fix integers $3\leq c\leq n-1$ and let $G$ be the graph obtained from $\text{Cut}_{c,n-c-1}$ by attaching a leaf to a vertex in the cycle adjacent to the vertex of degree $(n-c+1)$. Then we have
\begin{equation}
X_{\text{Cut}_{c,n-c}}=X_{\text{Cat}_{(n-c+1)1^{c-3}2}}-X_{\text{Cat}_{(n-c)1^{c-2}2}}+X_G.
\end{equation}
\item Let $C_n$ be the cycle on $n\geq 4$ vertices and let $G$ be the graph obtained from $C_{n-1}$ by adding a new vertex and joining it to two adjacent vertices of $C_{n-1}$. Then we have
\begin{equation}
X_{C_n}=X_{P_n}+X_G-X_{\text{Cut}_{n-1,1}}.
\end{equation}
\item Let $G$ be the graph obtained from $\text{St}_n$ by adding two edges $e$ and $e'$ joining a leaf $v$ to two others. Then we have
\begin{equation}
X_{\text{Cat}_{(n-2)2}}=X_{\text{Cut}_{4,n-4}}-X_G+X_{(G/e)^\multimap}.
\end{equation}
\end{enumerate}
\end{lemma}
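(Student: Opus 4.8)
The plan is to obtain all three identities from the same device: apply the deletion-near-contraction relation \eqref{eq:dncrelation} to two carefully chosen graphs, and then form the signed difference of the two instances so that the terms carrying an isolated vertex cancel. The key observation is that in \eqref{eq:dncrelation} the only operation that creates an isolated vertex is the dot-contraction $X_{(G/e)^\circ}$, so two DNC expansions whose contractions $G/e$ are isomorphic will share this term verbatim, and subtracting annihilates it. In each part the substance of the argument is then the routine bookkeeping of identifying the isomorphism type of every deletion, contraction, and leaf-contraction, remembering that parallel edges produced by a contraction are merged into one.

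For Part (1), I would apply \eqref{eq:dncrelation} to $\text{Cut}_{c,n-c}$ at the cycle edge $e$ incident to the hub $v$. Deleting $e$ opens the cycle into the caterpillar $\text{Cat}_{(n-c+1)1^{c-3}2}$; contracting $e$ shortens the cycle to give $\text{Cut}_{c-1,n-c}$; and the leaf-contraction gives $\text{Cut}_{c-1,n-c+1}$. I would then apply \eqref{eq:dncrelation} to $G$ at the cycle edge joining the hub to its leaf-bearing neighbour: here the deletion is the caterpillar $\text{Cat}_{(n-c)1^{c-2}2}$, while—because contracting that edge merges the neighbour's extra leaf into the hub's leaf set—the contraction is again $\text{Cut}_{c-1,n-c}$ and the leaf-contraction is again $\text{Cut}_{c-1,n-c+1}$. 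Subtracting the two relations cancels both of these common terms and leaves exactly $X_{\text{Cut}_{c,n-c}}-X_G=X_{\text{Cat}_{(n-c+1)1^{c-3}2}}-X_{\text{Cat}_{(n-c)1^{c-2}2}}$, which is the claim.

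Parts (2) and (3) follow the same template with different auxiliary graphs. For Part (2), I would apply \eqref{eq:dncrelation} to $C_n$ at a cycle edge, producing $P_n$, the dot-contraction $(C_{n-1})^\circ$, and $\text{Cut}_{n-1,1}$; then apply it to $G$ at the edge from the added vertex to one of its two cycle-neighbours, whose deletion and leaf-contraction are \emph{both} $\text{Cut}_{n-1,1}$ and whose contraction is again $C_{n-1}$. Subtracting cancels $(C_{n-1})^\circ$ and, after combining the two $\text{Cut}_{n-1,1}$ terms, yields $X_{C_n}=X_{P_n}+X_G-X_{\text{Cut}_{n-1,1}}$. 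For Part (3), I would apply \eqref{eq:dncrelation} to $\text{Cut}_{4,n-4}$ at the cycle edge opposite the hub—whose deletion is $\text{Cat}_{(n-2)2}$—and to $G$ at $e$; one checks that both expansions share the same contraction $\text{Cut}_{3,n-4}$ and the same leaf-contraction $(G/e)^\multimap$, so subtracting cancels these and leaves the desired relation between $X_{\text{Cat}_{(n-2)2}}$, $X_{\text{Cut}_{4,n-4}}$, $X_G$, and the remaining deletion term $X_{G\setminus e}$, which one identifies as the cuttlefish $\text{Cut}_{3,n-3}$.

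The step that requires genuine care—and the only real obstacle—is verifying the claimed coincidences between the two DNC expansions in each part: that the chosen auxiliary graph and the chosen edges really do make the contraction (and, in Parts (1) and (3), the leaf-contraction) isomorphic across the two applications, so that the cancellation is legitimate. This hinges on the single combinatorial fact that contracting a cycle edge simultaneously lowers the cycle length by one and amalgamates the leaf sets at its two endpoints, so that the stray leaf built into $G$ lands in precisely the position occupied by the leaf introduced by the cuttlefish's leaf-contraction. Once those isomorphisms are confirmed, each identity drops out of \eqref{eq:dncrelation} with no further input beyond the relation itself.
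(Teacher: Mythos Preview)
Your approach---apply the DNC relation to two well-chosen graphs so that the dot-contraction (and sometimes also the leaf-contraction) terms coincide, then subtract---is exactly the paper's. For Part~(2) your two applications are identical to the paper's equations \eqref{eq:cutrelation3}--\eqref{eq:cutrelation4}. For Part~(1) you pick a slightly different pair: you expand $\text{Cut}_{c,n-c}$ and $G$ themselves (so that both contraction and leaf-contraction match), whereas the paper expands the single auxiliary graph $\text{Cut}_{c+1,n-c-1}$ at two adjacent cycle edges and matches only the dot-contractions. Both routes are valid and equally short; yours avoids introducing an extra $(c{+}1)$-cycle graph, at the minor cost that your ``$\text{Cut}_{c-1,\,\cdot}$'' is an abuse of notation when $c=3$ (the contraction is then a star, though the isomorphism you need still holds).

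For Part~(3) there is a genuine mismatch you should flag. Your cancellation of the common contraction and leaf-contraction leaves
\[
X_{\text{Cat}_{(n-2)2}}=X_{\text{Cut}_{4,n-4}}-X_G+X_{G\setminus e}=X_{\text{Cut}_{4,n-4}}-X_G+X_{\text{Cut}_{3,n-3}},
\]
whereas the lemma as stated has $X_{(G/e)^\multimap}$ in place of $X_{\text{Cut}_{3,n-3}}$. These two graphs are not isomorphic (one has all $n-3$ pendants on a single cycle vertex, the other splits them $n-4$ and $1$), and a direct check at $n=5$ shows their chromatic symmetric functions differ. In fact your relation is the correct one: the paper's displayed equation \eqref{eq:cutrelation5} misidentifies the leaf-contraction of $\text{Cut}_{4,n-4}$ as $\text{Cut}_{3,n-3}$ when it is actually isomorphic to $(G/e)^\multimap$, and this error cancels against the genuine $X_{(G/e)^\multimap}$ term in \eqref{eq:cutrelation6} to produce the lemma's (incorrect) formula. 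So your argument is sound, but what it proves is not literally Part~(3) as written---it proves the corrected version with $X_{\text{Cut}_{3,n-3}}$, which serves equally well for the application in Theorem~\ref{thm:starcoloop}.
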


\begin{proof}\hspace{0pt}
\begin{enumerate}
\item Let $e$ be an edge of the cycle of $\text{Cut}_{c+1,n-c-1}$ incident to the vertex of degree $(n-c+2)$. By the DNC relation, we have
\begin{equation}\label{eq:cutrelation1}
X_{\text{Cut}_{c+1,n-c-1}}=X_{\text{Cat}_{(n-c)1^{c-2}2}}-X_{\text{Cut}_{c,n-c-1}}\mathfrak{st}_1+X_{\text{Cut}_{c,n-c}},
\end{equation}
and by applying the DNC relation to an edge $e'$ adjacent to $e$ in the cycle, we have
\begin{equation}\label{eq:cutrelation2}
X_{\text{Cut}_{c+1,n-c-1}}=X_{\text{Cat}_{(n-c+1)1^{c-3}2}}-X_{\text{Cut}_{c,n-c-1}}\mathfrak{st}_1+X_G.
\end{equation}
Now the result follows by subtracting \eqref{eq:cutrelation2} from \eqref{eq:cutrelation1}.
\item By applying the DNC relation to an edge $e$ of $C_n$, we have
\begin{equation}\label{eq:cutrelation3}
X_{C_n}=X_{P_n}-X_{C_{n-1}}\mathfrak{st}_1+X_{\text{Cut}_{n-1,1}},
\end{equation}
and by applying the DNC relation to an edge $e$ not in the $(n-1)$-cycle of $G$, we have
\begin{equation}\label{eq:cutrelation4}
X_G=X_{\text{Cut}_{n-1,1}}-X_{C_{n-1}}\mathfrak{st}_1+X_{\text{Cut}_{n-1,1}}.
\end{equation}
Now the result follows by subtracting \eqref{eq:cutrelation4} from \eqref{eq:cutrelation3}.
\item By applying the DNC relation to an edge of the cycle in $\text{Cut}_{4,n-4}$ not incident to the vertex of degree $(n-2)$, we have
\begin{equation}\label{eq:cutrelation5}
X_{\text{Cut}_{4,n-4}}=X_{\text{Cat}_{(n-2)2}}-X_{\text{Cut}_{3,n-4}}\mathfrak{st}_1+X_{\text{Cut}_{3,n-3}},
\end{equation}
and by applying the DNC relation to the edge $e$ of $G$, we have
\begin{equation}\label{eq:cutrelation6}
X_G=X_{\text{Cut}_{3,n-3}}-X_{\text{Cut}_{3,n-4}}\mathfrak{st}_1+X_{(G/e)^\multimap}.
\end{equation}
Now the result follows by subtracting \eqref{eq:cutrelation6} from \eqref{eq:cutrelation5}.
\end{enumerate}
\end{proof}

\begin{theorem}\label{thm:starcoloop}
Let $n\geq 4$ and let $G$ be a connected $n$-vertex graph other than $\text{St}_n$. Then $X_G$ can be written as a linear combination of other connected $n$-vertex graphs.
\end{theorem}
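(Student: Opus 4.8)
The plan is to lean on Corollary \ref{cor:catcutbasis}, which says that the caterpillars in $B$ together with the cuttlefish in $B'$ form a basis of $\mathcal C_n$, and to sort a connected graph $G\neq\text{St}_n$ according to whether or not its chromatic symmetric function is one of these basis elements. First I would dispose of every connected $G$ whose $X_G$ is \emph{not} a basis element: expanding $X_G$ in the basis $B\cup B'$ writes it as a rational combination of caterpillars and cuttlefish, none of which is $G$ itself, so $X_G$ is immediately a linear combination of other connected graphs. This reduces the whole problem to the basis graphs, namely the caterpillars occurring in $B$ (other than $\text{St}_n$) and the cuttlefish in $B'$.

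For the cuttlefish I would invoke Lemma \ref{lem:cutrelations} directly. Part (1) rewrites each $X_{\text{Cut}_{c,n-c}}$ with $3\leq c\leq n-1$ as a combination of two caterpillars and one further unicyclic graph, and part (2), read as $X_{C_n}=X_{P_n}+X_G-X_{\text{Cut}_{n-1,1}}$, handles the one remaining cuttlefish $\text{Cut}_{n,0}=C_n$; in each case the graphs on the right are connected and distinct from the graph on the left. For the caterpillars, the generic ones are handled by Theorem \ref{thm:treecoloops}: provided $n\geq 8$, every tree other than $\text{St}_n$, $P_n$, and $\text{Cat}_{(n-2)2}$ is already a linear combination of other trees, hence of other connected graphs. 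The two exceptional trees are exactly the ones that Corollaries \ref{cor:path} and \ref{cor:cat} forbid from being written using trees alone, so here the cyclic relations are essential: part (2) of Lemma \ref{lem:cutrelations}, read as $X_{P_n}=X_{C_n}-X_G+X_{\text{Cut}_{n-1,1}}$, produces $P_n$, and part (3) produces $X_{\text{Cat}_{(n-2)2}}$ from $\text{Cut}_{4,n-4}$ and two further graphs.

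The main obstacle, and the reason Lemma \ref{lem:cutrelations} is needed at all, is precisely $P_n$ and $\text{Cat}_{(n-2)2}$: together with $\text{St}_n$ these are the only trees that are not combinations of other trees, so any proof must leave the category of trees and realize them from genuinely cyclic graphs. The secondary obstacle is the restriction $n\geq 8$ inherited from Theorem \ref{thm:treecoloops} (and Lemma \ref{lem:ab1k}). For $4\leq n\leq 7$ the argument above already covers all cyclic graphs and both exceptional trees; a short check shows that for $n=4,5$ there are no other non-star caterpillars in $B$, and I would dispatch the finitely many remaining caterpillars for $n=6,7$ by direct computation, exhibiting each explicitly as a combination of other connected graphs. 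Assembling these cases gives the theorem for all $n\geq 4$.
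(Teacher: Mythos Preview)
Your plan is correct and follows the paper's proof essentially verbatim: reduce to the basis $B\cup B'$ via Corollary~\ref{cor:catcutbasis}, handle the cuttlefish with Lemma~\ref{lem:cutrelations}(1)--(2), the generic caterpillars for $n\geq 8$ with Theorem~\ref{thm:treecoloops}, the exceptional trees $P_n$ and $\text{Cat}_{(n-2)2}$ with Lemma~\ref{lem:cutrelations}(2)--(3), and the remaining caterpillars for $4\leq n\leq 7$ by direct computation. One small caveat (which the paper's own proof also elides): when $c=n-1$ the auxiliary graph $G$ in Part~(1) of Lemma~\ref{lem:cutrelations} is isomorphic to $\text{Cut}_{n-1,1}$ itself and the two caterpillars both degenerate to $P_n$, so that identity is vacuous there---but $\text{Cut}_{n-1,1}$ is still covered, since rearranging Part~(2) gives $X_{\text{Cut}_{n-1,1}}=X_{P_n}+X_G-X_{C_n}$ with all three graphs on the right connected and distinct from $\text{Cut}_{n-1,1}$.
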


\begin{proof}
By Corollary \ref{cor:catcutbasis}, the caterpillars in \eqref{eq:catbasis} and the cuttlefish in \eqref{eq:cutbasis} form a basis for $\mathcal C_n$, so we need only consider the case where $G$ is one of these caterpillars or cuttlefish. By Part (1) of Lemma \ref{lem:cutrelations}, the cuttlefish $\text{Cut}_{c,n-c}$ for $3\leq c\leq n-1$ can be written as a linear combination of other connected graphs, and by Part (2) of Lemma \ref{lem:cutrelations}, so can the cuttlefish $\text{Cut}_{n,0}=C_n$. By Theorem \ref{thm:treecoloops}, every tree with $n\geq 8$ vertices other than $\text{St}_n$, $P_n$, and $\text{Cat}_{(n-2)2}$ can be written as a linear combination of other trees. By Parts (2) and (3) of Lemma \ref{lem:cutrelations}, the path $P_n$ and caterpillar $\text{Cat}_{(n-2)2}$ for $n\geq 8$ can be written as a linear combination of other connected graphs. Finally, it is routine to check by computer that the remaining caterpillars with $4\leq n\leq 7$ vertices can be written as a linear combination of other connected graphs.
\end{proof}

We conclude this section by proving Lemma \ref{lem:ab1k}. 

\begin{proof}[Proof of Lemma \ref{lem:ab1k}. ]
We begin with Part (1). It suffices to find a linear combination of trees, not using $\text{Cat}_{a1^kb}$, with leading partition $\lambda=ab1^k$. We calculate using the DNC relation and identify leading partitions using Corollary \ref{cor:catleading}. We will have several cases, depending on whether $b=2$ or $k=0$.

\textbf{Case 1: $\lambda=ab1^k$ with $a\geq b\geq 3$ and $k\geq 1$. } We have
\begin{align*}
X_{\text{Cat}_{(b-1)a1^{k-1}2}}-&X_{\text{Cat}_{a1^{k-1}2(b-1)}}\\=&(X_{\text{Cat}_{a1^{k-1}2}}\mathfrak{st}_{b-1}-X_{\text{Cat}_{(a+b-2)1^{k-1}2}}\mathfrak{st}_1+X_{\text{Cat}_{(a+b-1)1^{k-1}2}})\\&-(X_{\text{Cat}_{a1^{k-1}2}}\mathfrak{st}_{b-1}-X_{\text{Cat}_{a1^{k-1}b}}\mathfrak{st}_1+X_{\text{Cat}_{a1^{k-1}(b+1)}})\\=&X_{\text{Cat}_{a1^{k-1}b}}\mathfrak{st}_1-X_{\text{Cat}_{a1^{k-1}(b+1)}}-X_{\text{Cat}_{(a+b-2)1^{k-1}2}}\mathfrak{st}_1+X_{\text{Cat}_{(a+b-1)1^{k-1}2}},
\end{align*}
which has leading partition $\lambda=ab1^k$ by Corollary \ref{cor:catleading}.

\textbf{Case 2: $\lambda=a21^k$ with $a\geq 4$ and $k\geq 1$. } We show by induction on $k$ that the linear combination
\begin{equation*}
X_{\text{Cat}_{(a-1)21^{k-1}2}}-X_{\text{Cat}_{2(a-1)1^{k-1}2}}-X_{\text{Cat}_{(a-1)1^k3}}\end{equation*}
has leading partition $\lambda=a21^k$. If $k=1$, then 
\begin{align*}
X_{\text{Cat}_{(a-1)22}}-&X_{\text{Cat}_{2(a-1)2}}-X_{\text{Cat}_{(a-1)13}}=(X_{\text{Cat}_{(a-1)2}}\mathfrak{st}_2-X_{\text{Cat}_{(a-1)3}}\mathfrak{st}_1+X_{\text{Cat}_{(a-1)4}})\\&-(X_{\text{Cat}_{(a-1)2}}\mathfrak{st}_2-X_{\text{Cat}_{a2}}\mathfrak{st}_1+X_{\text{Cat}_{(a+1)2}})-(\mathfrak{st}_{a3}-X_{\text{Cat}_{(a-1)3}}\mathfrak{st}_1+X_{\text{Cat}_{(a-1)4}})\\&=X_{\text{Cat}_{a2}}\mathfrak{st}_1-\mathfrak{st}_{a3}-X_{\text{Cat}_{(a+1)2}},
\end{align*}
which has leading partition $\lambda=a21$ by Corollary \ref{cor:catleading}. Now if $k\geq 2$, then
\begin{align*}
X_{\text{Cat}_{(a-1)21^{k-1}2}}-&X_{\text{Cat}_{2(a-1)1^{k-1}2}}-X_{\text{Cat}_{(a-1)1^k3}}\\=&(X_{\text{Cat}_{(a-1)2}}X_{P_{k+1}}-X_{\text{Cat}_{(a-1)21^{k-2}2}}\mathfrak{st}_1+X_{\text{Cat}_{(a-1)31^{k-2}2}})\\
&-(X_{\text{Cat}_{2(a-1)}}X_{P_{k+1}}-X_{\text{Cat}_{2(a-1)1^{k-2}2}}\mathfrak{st}_1+X_{\text{Cat}_{2a1^{k-2}2}})\\&-(\mathfrak{st}_{a-1}X_{\text{Cat}_{21^{k-2}3}}-X_{\text{Cat}_{(a-1)1^{k-1}3}}\mathfrak{st}_1+X_{\text{Cat}_{a1^{k-1}3}})\\=&-\mathfrak{st}_1(X_{\text{Cat}_{(a-1)21^{k-2}2}}-X_{\text{Cat}_{2(a-1)1^{k-2}2}}-X_{\text{Cat}_{(a-1)1^{k-1}3}})\\&+(\mathfrak{st}_{a-1}X_{\text{Cat}_{31^{k-2}2}}-X_{\text{Cat}_{(a+1)1^{k-2}2}}-X_{\text{Cat}_{(a+2)1^{k-2}2}})\\&-\mathfrak{st}_{a-1}X_{\text{Cat}_{21^{k-2}3}}-X_{\text{Cat}_{2a1^{k-2}2}}-X_{\text{Cat}_{a1^{k-1}3}}\\=&-\mathfrak{st}_1(X_{\text{Cat}_{(a-1)21^{k-2}2}}-X_{\text{Cat}_{2(a-1)1^{k-2}2}}-X_{\text{Cat}_{(a-1)1^{k-1}3}})\\&-X_{\text{Cat}_{(a+1)1^{k-2}2}}-X_{\text{Cat}_{(a+2)1^{k-2}2}}-X_{\text{Cat}_{2a1^{k-2}2}}-X_{\text{Cat}_{a1^{k-1}3}},
\end{align*}
which has leading partition $\lambda=a21^k$ by our induction hypothesis and Corollary \ref{cor:catleading}. 

\textbf{Case 3: $\lambda=321^k$ with $k\geq 3$. } Let $T$ be the tree obtained by attaching two paths of length $2$ to the end of $P_{n-4}$. Let $U$ be the tree $\text{Cat}_{23}$ if $k=3$ and $\text{Cat}_{221^{k-4}2}$ if $k\geq 4$. We have
\begin{align*}
X_T&-2X_{\text{Cat}_{221^{k-1}2}}-X_{\text{Cat}_{221^{k-3}22}}=(\mathfrak{st}_2X_{\text{Cat}_{21^{k-1}2}}-X_{\text{Cat}_{221^{k-2}2}}\mathfrak{st}_1+X_{\text{Cat}_{231^{k-2}2}})\\&-(\mathfrak{st}_2X_{\text{Cat}_{21^{k-1}2}}-X_{\text{Cat}_{31^{k-1}2}}\mathfrak{st}_1+X_{\text{Cat}_{41^{k-1}2}})\\&-(X_U\mathfrak{st}_3-X_{\text{Cat}_{221^{k-2}2}}\mathfrak{st}_1+X_{\text{Cat}_{221^{k-3}22}})+X_{\text{Cat}_{221^{k-3}22}}\\=&X_{\text{Cat}_{31^{k-1}2}}\mathfrak{st}_1+X_{\text{Cat}_{231^{k-2}2}}-X_U\mathfrak{st}_3-X_{\text{Cat}_{41^{k-1}2}},
\end{align*}
which has leading partition $\lambda=321^k$ by Corollary \ref{cor:catleading}.

\textbf{Case 4: $\lambda=ab$ with $a\geq b\geq 4$. }We have
\begin{align*}
&X_{\text{Cat}_{(a-1)21(b-2)}}-X_{\text{Cat}_{(a-1)12(b-2)}}-X_{\text{Cat}_{(a-1)2(b-1)}}+X_{\text{Cat}_{(a-1)1b}}+X_{\text{Cat}_{a(b-2)2}}\\=&(X_{\text{Cat}_{a-1,2}}\mathfrak{st}_{b-1}-X_{\text{Cat}_{(a-1)2(b-2)}}\mathfrak{st}_1+X_{\text{Cat}_{(a-1)3(b-2)}})\\&-(\mathfrak{st}_aX_{\text{Cat}_{2(b-2)}}-X_{\text{Cat}_{(a-1)2(b-2)}}\mathfrak{st}_1-X_{\text{Cat}_{(a-1)3(b-2)}})\\&-(X_{\text{Cat}_{(a-1)2}}\mathfrak{st}_{b-1}-X_{\text{Cat}_{(a-1)b}}\mathfrak{st}_1-X_{\text{Cat}_{(a-1)(b+1)}})\\&+(\mathfrak{st}_{ab}-X_{\text{Cat}_{(a-1)b}}\mathfrak{st}_1+X_{\text{Cat}_{(a-1)(b+1)}})+(\mathfrak{st}_aX_{\text{Cat}_{(b-2)2}}-X_{\text{Cat}_{(a+b-3)2}}\mathfrak{st}_1+X_{\text{Cat}_{(a+b-2)2}})\\=&\mathfrak{st}_{ab}-X_{\text{Cat}_{(a+b-3)2}}\mathfrak{st}_1+X_{\text{Cat}_{(a+b-2)2}},
\end{align*}
which has leading partition $\lambda=ab$ by Corollary \ref{cor:catleading}.

\textbf{Case 5: $\lambda=a3$ with $a\geq 5$. } We have
\begin{align*}
&X_{\text{Cat}_{(a-2)212}}-X_{\text{Cat}_{(a-2)122}}+X_{\text{Cat}_{(a-2)23}}-X_{\text{Cat}_{(a-2)32}}-X_{\text{Cat}_{3(a-2)2}}+X_{\text{Cat}_{2(a-1)2}}+X_{\text{Cat}_{(a-1)4}}\\=&(X_{\text{Cat}_{(a-2)2}}\mathfrak{st}_3-X_{\text{Cat}_{(a-2)22}}\mathfrak{st}_1+X_{\text{Cat}_{(a-2)32}})-(\mathfrak{st}_{a-1}X_{\text{Cat}_{22}}-X_{\text{Cat}_{(a-2)22}}\mathfrak{st}_1+X_{\text{Cat}_{(a-2)32}})\\&+(X_{\text{Cat}_{(a-2)2}}\mathfrak{st}_3-X_{\text{Cat}_{(a-2)4}}\mathfrak{st}_1+X_{\text{Cat}_{(a-2)5}})-(X_{\text{Cat}_{(a-2)3}}\mathfrak{st}_2-X_{\text{Cat}_{(a-2)4}}\mathfrak{st}_1-X_{\text{Cat}_{(a-2)5}})\\&-(X_{\text{Cat}_{(a-2)2}}\mathfrak{st}_3-X_{\text{Cat}_{a2}}\mathfrak{st}_1-X_{\text{Cat}_{(a+1)2}})+(X_{\text{Cat}_{(a-1)2}}\mathfrak{st}_2-X_{\text{Cat}_{a2}}\mathfrak{st}_1+X_{\text{Cat}_{(a+1)2}})\\&+X_{\text{Cat}_{(a-1)4}}\\=&X_{\text{Cat}_{(a-2)2}}\mathfrak{st}_3-\mathfrak{st}_{a-1}X_{\text{Cat}_{22}}-X_{\text{Cat}_{(a-2)3}}\mathfrak{st}_2+X_{\text{Cat}_{(a-1)2}}\mathfrak{st}_2+X_{\text{Cat}_{(a-1)4}}\\=&(\mathfrak{st}_{(a-2)32}-\mathfrak{st}_{(a-1)31}+\mathfrak{st}_{a3})-(\mathfrak{st}_{(a-1)22}-\mathfrak{st}_{(a-1)31}+\mathfrak{st}_{(a-1)4})-(\mathfrak{st}_{(a-2)32}-\mathfrak{st}_{a21}+\mathfrak{st}_{(a+1)2})\\&+(\mathfrak{st}_{(a-1)22}-\mathfrak{st}_{a21}+\mathfrak{st}_{(a+1)2})+(\mathfrak{st}_{(a-1)4}-\mathfrak{st}_{(a+2)1}+\mathfrak{st}_{a3})\\=&\mathfrak{st}_{a3}-\mathfrak{st}_{(a+2)1}+\mathfrak{st}_{(a+3)}=X_{\text{Cat}_{a3}}.
\end{align*}
Finally, we prove Part (2). Once again, it suffices to find a linear combination of trees, not using $\text{Cat}_{aaa}$, with leading partition $\lambda=aaa$. Let $T$ be the tree obtained by attaching a path of length $2$ to the middle vertex of $\text{Cat}_{a(a-2)a}$. Then we have
\begin{align*}
X_T-&X_{\text{Cat}_{a(a-2)a2}}-X_{\text{Cat}_{a1a(a-1)}}=(X_{\text{Cat}_{a(a-2)a}}\mathfrak{st}_2-X_{\text{Cat}_{a(a-1)a}}\mathfrak{st}_1+X_{\text{Cat}_{aaa}})\\&-(X_{\text{Cat}_{a(a-2)a}}\mathfrak{st}_2-X_{\text{Cat}_{a(a-2)(a+1)}}\mathfrak{st}_1+X_{\text{Cat}_{a(a-2)(a+2)}})\\&-(\mathfrak{st}_aX_{\text{Cat}_{(a+1)(a-1)}}-X_{\text{Cat}_{aa(a-1)}}\mathfrak{st}_1+X_{\text{Cat}_{(a+1)a(a-1)}})\\=&-\mathfrak{st}_1(X_{\text{Cat}_{a(a-1)}}\mathfrak{st}_a-X_{\text{Cat}_{a(2a-2)}}\mathfrak{st}_1+X_{\text{Cat}_{a(2a-1)}})+X_{\text{Cat}_{aaa}}\\&+X_{\text{Cat}_{a(a-2)(a+1)}}\mathfrak{st}_1-X_{\text{Cat}_{a(a-2)(a+2)}}-\mathfrak{st}_aX_{\text{Cat}_{(a+1)(a-1)}}\\&+\mathfrak{st}_1(\mathfrak{st}_aX_{\text{Cat}_{a(a-1)}}-X_{\text{Cat}_{(2a-1)(a-1)}}\mathfrak{st}_1+X_{\text{Cat}_{(2a)(a-1)}})-X_{\text{Cat}_{(a+1)a(a-1)}}\\=&X_{\text{Cat}_{aaa}}-X_{\text{Cat}_{a(a-2)(a+2)}}-\mathfrak{st}_aX_{\text{Cat}_{(a+1)(a-1)}}-X_{\text{Cat}_{(a+1)a(a-1)}}\\&+\mathfrak{st}_1(X_{\text{Cat}_{a(2a-2)}}\mathfrak{st}_1-X_{\text{Cat}_{a(2a-1)}}+X_{\text{Cat}_{a(a-2)(a+1)}}-X_{\text{Cat}_{(2a-1)(a-1)}}\mathfrak{st}_1+X_{\text{Cat}_{(2a)(a-1)}}),
\end{align*}
which has leading partition $\lambda=aaa$ by Corollary \ref{cor:catleading}.
\end{proof}

\section{The coefficient of $\mathfrak{st}_n$ and acyclic orientations}\label{sec:coefn}

In this section, we give an interpretation of the coefficient $c_n$ in $X_G=\sum_\lambda c_\lambda\mathfrak{st}_\lambda$ in terms of \emph{acyclic orientations} of $G$. 

\begin{definition}
An \emph{acyclic orientation} $\theta$ of a graph $G=(V,E)$ is an assignment of directions to the edges so that there are no directed cycles. A vertex $v\in V$ is a \emph{sink} of $\theta$ if all edges incident to $v$ are directed toward $v$.
\end{definition}

Stanley \cite[Corollary 1.3]{acyclic} proved that the number of acyclic orientations of $G$ is $|\chi_G(-1)|$. Greene and Zaslavsky proved that the linear coefficient of the chromatic polynomial counts acyclic orientations of $G$ with a particular sink.

\begin{theorem}\label{thm:chiacyclic} \cite[Theorem 7.3]{interpwhitney} Let $G=(V,E)$ be a graph and fix a vertex $v\in V$. Then $|\chi_G'(0)|$ is the number of acyclic orientations of $G$ with unique sink $v$. In particular, this number does not depend on $v$.
\end{theorem}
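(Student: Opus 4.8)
The plan is to prove, by induction on the number of edges, the sharper identity
\begin{equation*}
a(G,v)=(-1)^{n-1}\chi_G'(0),
\end{equation*}
where $a(G,v)$ denotes the number of acyclic orientations of $G$ with unique sink $v$ and $n=|V|$. This single identity delivers everything at once: taking absolute values gives $|\chi_G'(0)|=a(G,v)$, and since the right-hand side does not involve $v$, the count $a(G,v)$ is automatically independent of the chosen sink. The engine is the deletion--contraction recursion $\chi_G=\chi_{G\setminus e}-\chi_{G/e}$. Differentiating at $0$ and noting that $G\setminus e$ has $n$ vertices while $G/e$ has $n-1$, the sign factors line up to give
\begin{equation*}
(-1)^{n-1}\chi_G'(0)=(-1)^{n-1}\chi_{G\setminus e}'(0)+(-1)^{(n-1)-1}\chi_{G/e}'(0),
\end{equation*}
so it suffices to match this with a combinatorial recursion for $a(G,v)$.

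First I would dispose of the degenerate cases. If $G$ has a single vertex then $\chi_G=t$ and the empty orientation has $v$ as its sink, so both sides equal $1$. If $v$ is isolated but $G$ has another vertex, then some other component (or isolated vertex) always contributes a second sink, so $a(G,v)=0$; correspondingly $\chi_G(t)=t\,\chi_{G\setminus v}(t)$ forces $\chi_G'(0)=\chi_{G\setminus v}(0)=0$, matching. In the remaining case $v$ has an incident edge $e=\{v,w\}$, and the goal is the combinatorial recursion
\begin{equation*}
a(G,v)=a(G\setminus e,v)+a(G/e,v^*),
\end{equation*}
where $v^*$ is the vertex of $G/e$ obtained by merging $v$ and $w$. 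Granting this, induction on $|E|$ closes the argument, since both $G\setminus e$ and $G/e$ have fewer edges.

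To prove the recursion I would classify the acyclic orientations of $G$ with unique sink $v$ according to $e$. In every such orientation $e$ points $w\to v$, and I split on whether $w$ has an outgoing edge other than $e$. If it does, restricting the orientation to $G\setminus e$ keeps $v$ a sink and keeps $w$ a non-sink; I would check this is a bijection onto the acyclic orientations of $G\setminus e$ with unique sink $v$, the inverse re-adding $e$ oriented $w\to v$ (which creates no cycle since $v$ is a sink). If $e$ is the only outgoing edge of $w$, then all edges at $v$ and at $w$ point inward, and I would show that contracting $e$ yields an acyclic orientation of $G/e$ with unique sink $v^*$, giving a bijection with $a(G/e,v^*)$.

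The main obstacle is the contraction half: contracting an edge of an acyclic orientation can in general create a directed cycle and can produce multiple edges whose orientations must agree. Both issues are resolved by the hypothesis that $e$ is the unique outgoing edge of $w$. For acyclicity, any directed cycle of $G/e$ through $v^*$ would correspond to a directed path in $G$ leaving the set $\{v,w\}$; but $v$ is a sink and $w$'s only out-edge is the internal edge $e$, so no directed path can exit $\{v,w\}$. For consistency, a vertex $u$ adjacent to both $v$ and $w$ has both edges oriented $u\to v$ and $u\to w$, so they agree after merging into $u\to v^*$. The reverse map splits the edges at $v^*$ back to $v$ and $w$ using the original incidences and orients $e$ as $w\to v$; checking that it lands among the unique-sink orientations of $G$ is the symmetric verification. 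Assembling the two bijections yields the displayed recursion and completes the induction.
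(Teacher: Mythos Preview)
The paper does not supply a proof of this theorem; it is quoted from Greene and Zaslavsky \cite{interpwhitney} and used as a black box. So there is no ``paper's own proof'' to compare against.

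That said, your deletion--contraction argument is correct and is essentially the classical proof. Your case split is exhaustive (either $v$ is isolated, or some edge $e$ is incident to $v$), and the two bijections are sound. A couple of points worth making explicit in a final write-up: in the first bijection, when you re-insert $e$ as $w\to v$ into an acyclic orientation of $G\setminus e$ with unique sink $v$, you should note that $w$ already had an outgoing edge in $G\setminus e$ (since $v$ was the \emph{unique} sink there), so the image indeed lands in the ``$w$ has an outgoing edge other than $e$'' case. In the contraction bijection, your observation that $v^*$ has no outgoing edges in $G/e$ simultaneously shows acyclicity (no cycle can pass through $v^*$) and that $v^*$ is a sink; it is also worth remarking that simplifying multi-edges does not affect either $\chi_{G/e}$ or the count $a(G/e,v^*)$, since parallel edges at $v^*$ all point the same way. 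With those small clarifications, the induction closes cleanly.
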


Stanley further showed that the $e$-expansion of the chromatic symmetric function encodes even more information about acyclic orientations.

\begin{theorem}\label{thm:xacyclic} \cite[Theorem 3.3]{chromsym}
Let $G=(V,E)$ be a graph, let $X_G=\sum_\lambda d_\lambda e_\lambda$, and fix an integer $k$. Then 
\begin{equation}
\sum_{\ell(\lambda)=k}d_\lambda=\text{the number of acyclic orientations of }G\text{ with exactly }k\text{ sinks.}
\end{equation}
\end{theorem}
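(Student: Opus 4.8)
The plan is to convert the $e$-coefficient statement into a single generating-function identity and then recognize the right-hand side through Stanley's reciprocity for acyclic orientations. Since $\Lambda=\mathbb{Q}[e_1,e_2,\ldots]$ is a free polynomial algebra on the elementary symmetric functions, there is a well-defined ring homomorphism $\varphi_t\colon\Lambda\to\mathbb{Q}[t]$ determined by $\varphi_t(e_j)=t$ for all $j\geq 1$. Then $\varphi_t(e_\lambda)=t^{\ell(\lambda)}$, so applying $\varphi_t$ to $X_G=\sum_\lambda d_\lambda e_\lambda$ collects exactly the sums we care about:
\begin{equation*}
\varphi_t(X_G)=\sum_{k}\Big(\sum_{\ell(\lambda)=k}d_\lambda\Big)t^k.
\end{equation*}
Thus the theorem is equivalent to the polynomial identity $\varphi_t(X_G)=\sum_{\theta}t^{\,\mathrm{sink}(\theta)}$, where $\theta$ ranges over acyclic orientations of $G$ and $\mathrm{sink}(\theta)$ is the number of sinks. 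Because both sides are polynomials in $t$ of degree at most $|V|$, it suffices to verify this at $t=m$ for every positive integer $m$, i.e.\ to show $\varphi_m(X_G)=\sum_\theta m^{\,\mathrm{sink}(\theta)}$.

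Next I would interpret $\varphi_m$ combinatorially. At the level of generating functions the substitution $e_j\mapsto m$ reads $\sum_j e_j y^j\mapsto (1+(m-1)y)/(1-y)$, which is the superspecialization at one positive variable $m-1$ and one negative variable $1$. Feeding this into Stanley's power-sum expansion $X_G=\sum_{F\subseteq E}(-1)^{|F|}p_{\lambda(F)}$ (where $\lambda(F)$ records the vertex-counts of the connected components of the spanning subgraph $(V,F)$) turns $\varphi_m(X_G)$ into a signed sum over spanning subgraphs. By an inclusion–exclusion argument of Whitney type this signed sum equals the number of \emph{compatible pairs} $(\theta,\sigma)$ counted by Stanley's reciprocity theorem \cite{acyclic}: an acyclic orientation $\theta$ together with a coloring $\sigma\colon V\to[m]$ that weakly decreases along every directed edge. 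For a fixed $\theta$ these $\sigma$ are precisely the order-preserving maps from the poset of $\theta$ into the chain $[m]$, so they are enumerated by the order polynomial of that poset.

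The final step is to extract the sink statistic from this order-polynomial count and to match it against $\sum_\theta m^{\,\mathrm{sink}(\theta)}$, which is where the real work lies. Concretely, one expands each order polynomial using the theory of $(P,\omega)$-partitions, so that the generating function over all compatible pairs is resolved into contributions indexed by the sinks of $\theta$; comparing the resulting polynomial in $m$ with the series $(1+(m-1)y)/(1-y)$ produced by $\varphi_m$ isolates the factor $m^{\,\mathrm{sink}(\theta)}$. The main obstacle is this middle-to-final passage: correctly bookkeeping which edges are forced to be \emph{strict} versus \emph{weak} under a coloring, and verifying that ties (monochromatic edges) redistribute exactly so that the number of sinks — and not some other statistic — controls the degree of freedom surviving in the specialization. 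Once that correspondence is established, the identity $\varphi_t(X_G)=\sum_\theta t^{\,\mathrm{sink}(\theta)}$ follows by comparing two polynomials that agree at all positive integers, and reading off the coefficient of $t^k$ gives the theorem.
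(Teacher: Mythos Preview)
This theorem is not proved in the paper; it is quoted from \cite{chromsym} and then applied in Section~\ref{sec:coefn}. So there is no ``paper's own proof'' to compare against here.

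On the proposal itself: the opening move is right. The ring homomorphism $\varphi_t(e_j)=t$ collapses the $e$-expansion to the length generating function, so the theorem is indeed equivalent to the identity $\varphi_t(X_G)=\sum_\theta t^{\mathrm{sink}(\theta)}$. But from that point your argument is not a proof, and you say as much (``which is where the real work lies'', ``The main obstacle is this middle-to-final passage''). Worse, the route you sketch through Stanley's 1973 reciprocity does not line up with the specialization you introduced. Reciprocity computes $(-1)^n\chi_G(-m)$, which is obtained from the principal specialization $x_1=\cdots=x_m=1$ of $X_G$ (followed by $m\mapsto -m$), \emph{not} from $e_j\mapsto m$; these are different homomorphisms, since for example $e_j(1^m)=\binom{m}{j}$ rather than $m$. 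So the compatible-pair count you invoke is not $\varphi_m(X_G)$, and the closing appeal to $(P,\omega)$-partitions (``ties redistribute exactly so that the number of sinks\dots controls the degree of freedom'') is left entirely unspecified. As written, the proposal isolates the correct target identity but does not establish it.
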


In particular, the coefficient of $e_n$ is equal to the number of acyclic orientations with a unique sink, which by Theorem \ref{thm:chiacyclic} is equal to $n|\chi_G'(0)|$. We now show a similar result about the coefficient of $\mathfrak{st}_n$.

\begin{theorem}
Let $G=(V,E)$ be an $n$-vertex graph, let $X_G=\sum_\lambda c_\lambda\mathfrak{st}_\lambda$, and fix a vertex $v\in V$. Then $c_n$ is the number of acyclic orientations of $G$ with unique sink $v$. 
\end{theorem}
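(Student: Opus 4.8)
The plan is to route through the chromatic polynomial and then invoke the theorem of Greene and Zaslavsky (Theorem \ref{thm:chiacyclic}), which already expresses the relevant acyclic-orientation count as $|\chi_G'(0)|$. The bridge is the principal specialization $\chi_G(t)=X_G(\underbrace{1,\ldots,1}_{t},0,\ldots)$, under which each star $\mathfrak{st}_\lambda$ can be evaluated explicitly, so that the coefficient $c_n$ becomes visible as the linear coefficient of $\chi_G$.

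First I would compute the specialization of $\mathfrak{st}_\lambda=X_{\text{St}_\lambda}$. Since $\text{St}_\lambda$ is the disjoint union of the trees $\text{St}_{\lambda_1},\ldots,\text{St}_{\lambda_\ell}$, its chromatic polynomial factors as $\prod_{i=1}^\ell t(t-1)^{\lambda_i-1}=t^{\ell(\lambda)}(t-1)^{n-\ell(\lambda)}$. Applying this term-by-term to $X_G=\sum_\lambda c_\lambda\mathfrak{st}_\lambda$ and grouping by length gives $\chi_G(t)=\sum_{k=1}^n\sigma_k(G)\,t^k(t-1)^{n-k}$, where $\sigma_k(G)$ is the length-$k$ star-coefficient sum. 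I would then extract the coefficient of $t^1$: for $k\geq 2$ the term $t^k(t-1)^{n-k}$ has lowest degree $k\geq 2$ and so contributes nothing linear, while for $k=1$ the term $t(t-1)^{n-1}$ contributes $(-1)^{n-1}$. Since $(n)$ is the only partition of length $1$, we have $\sigma_1(G)=c_n$, and hence $\chi_G'(0)=(-1)^{n-1}c_n$. Note this identity is purely algebraic and holds for all $G$, connected or not.

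Combining $\chi_G'(0)=(-1)^{n-1}c_n$ with Theorem \ref{thm:chiacyclic} immediately yields $|c_n|=|\chi_G'(0)|=$ the number of acyclic orientations of $G$ with unique sink $v$. The only subtle point, which I expect to be the main obstacle, is fixing the sign, i.e. showing $c_n\geq 0$ so that the absolute value can be dropped. For connected $G$ this follows from the DNC framework of the background section: every path in a DNC tree from $G$ to $\text{St}_n$ contributes $(-1)^{a_1}\mathfrak{st}_n$, where $a_1$ is the number of parts of $(n)$ equal to $1$; for $n\geq 2$ we have $a_1=0$, so each such path contributes $+\mathfrak{st}_n$ with no cancellation, whence $c_n$ is a nonnegative integer (the count of such paths). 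The case $n=1$ is checked directly, and for disconnected $G$ the specialization shows $\chi_G(t)$ is divisible by $t^2$, forcing $c_n=0$, consistent with there being no acyclic orientation with a single sink; alternatively one may simply cite the classical alternation of signs of the chromatic polynomial coefficients, which gives $(-1)^{n-1}[t^1]\chi_G=c_n\geq 0$ uniformly. In all cases $c_n=|\chi_G'(0)|$ equals the number of acyclic orientations of $G$ with unique sink $v$, as claimed.
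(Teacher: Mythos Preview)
Your argument is correct, but it takes a different route from the paper. The paper passes through the $e$-basis rather than the chromatic polynomial: it observes that by Theorem~\ref{thm:xacyclic} the coefficient of $e_n$ in $\mathfrak{st}_\lambda$ counts acyclic orientations of $\text{St}_\lambda$ with a unique sink, which is $0$ unless $\lambda=(n)$ (since each component needs a sink) and equals $n$ when $\lambda=(n)$. This gives $d_n=nc_n$ directly, and since $d_n$ counts acyclic orientations of $G$ with exactly one sink, dividing by $n$ and invoking Theorem~\ref{thm:chiacyclic} finishes. Your approach instead specializes to $\chi_G(t)$ and reads off the linear coefficient; this is arguably more elementary since it avoids Stanley's $e$-expansion theorem, but it costs you the extra step of determining the sign of $c_n$. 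The paper's route sidesteps that issue because $d_n$ is manifestly a nonnegative count, so $c_n=d_n/n\ge 0$ comes for free. Your sign argument via the DNC tree (no dot-contractions on a path to $\text{St}_n$ when $n\ge 2$) is valid and already set up in the background section, so the trade-off is minor.
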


\begin{proof}
By Theorem \ref{thm:xacyclic}, the coefficient of $e_n$ in $\mathfrak{st}_\lambda$ is the number of acyclic orientations of the star forest $\text{St}_\lambda$ with a unique sink. Because every connected component must have a sink, this coefficient is $0$ unless $\lambda=n$, in which case it is $n$. Therefore, if we write $X_G=\sum_\lambda c_\lambda\mathfrak{st}_\lambda=\sum_\mu d_\mu e_\mu$, we see that $c_n=\frac 1n d_n=|\chi_G'(0)|$ is the number of acyclic orientations of $G$ with unique sink $v$. 
\end{proof}

This allows us to recover some known results connecting the coefficient $c_n$ to properties of the graph $G$.

\begin{corollary} \label{cor:coefn} Let $G=(V,E)$ be an $n$-vertex graph and let $X_G=\sum_\lambda c_\lambda\mathfrak{st}_\lambda$. 
\begin{enumerate}
\item We have $c_n\neq 0$ if and only if $G$ is connected.
\item \cite[Section 3]{chromstar} We have $c_n=1$ if and only if $G$ is a tree.
\item \cite[Corollary 3.4]{chromunicyclic} If $G$ is a connected unicyclic graph with a cycle of size $c$, then $c_n=c-1$. 
\end{enumerate}
\end{corollary}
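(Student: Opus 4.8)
The plan is to derive all three parts from the theorem just proved, namely that $c_n$ equals the number $N(G,v)$ of acyclic orientations of $G$ with unique sink $v$, equivalently the analytic statement $c_n=|\chi_G'(0)|$. Beyond this, the only inputs I need are the classical deletion-contraction recurrence $\chi_G=\chi_{G\setminus e}-\chi_{G/e}$ and Whitney's sign theorem, that the coefficient of $t^k$ in the chromatic polynomial of an $m$-vertex connected graph has sign $(-1)^{m-k}$. In particular, for a connected graph the lowest-degree term of $\chi_G$ is linear (the multiplicity of the root $0$ equals the number of components), so $\chi_G'(0)=(-1)^{n-1}N(G,v)$ with $N(G,v)\geq 0$.

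For Part (1) I would argue directly with orientations. If $G$ is disconnected, then in any acyclic orientation each connected component contains at least one sink, so there are at least two sinks and none can be the \emph{unique} sink $v$; hence $c_n=0$. Conversely, if $G$ is connected I exhibit one valid orientation: order the vertices as $v=v_1,v_2,\dots,v_n$ so that each $v_i$ with $i\geq 2$ has a neighbour among $v_1,\dots,v_{i-1}$ (possible by connectedness), and orient every edge toward its lower-indexed endpoint. This orientation is acyclic, $v_1=v$ is a sink, and every other $v_i$ has an outgoing edge to an earlier vertex, so $v$ is the unique sink and $c_n\geq 1$.

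For Parts (2) and (3) the engine is a recurrence obtained from deletion-contraction. If $e$ lies on a cycle of $G$, then both $G\setminus e$ (on $n$ vertices) and $G/e$ (on $n-1$ vertices) are connected, so the signs in $\chi_G'(0)=\chi_{G\setminus e}'(0)-\chi_{G/e}'(0)$ collapse, using $\chi_{G/e}'(0)=(-1)^{n-2}N(G/e)$, to the additive recurrence $N(G)=N(G\setminus e)+N(G/e)$, where I write $N$ for the vertex-independent count given by Theorem \ref{thm:chiacyclic}. For Part (2), a tree has the single orientation pointing toward $v$, and $\chi_T(t)=t(t-1)^{n-1}$ gives $|\chi_T'(0)|=1$, so $c_n=1$; conversely, if $G$ is connected (by Part (1)) but not a tree, I choose $e$ on a cycle, whence $N(G\setminus e)\geq 1$ and $N(G/e)\geq 1$ by Part (1), so $c_n=N(G)\geq 2$ and in particular $c_n\neq 1$. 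For Part (3), I induct on the cycle length $c$, choosing $e$ on the unique cycle: then $G\setminus e$ is a tree with $N=1$, while $G/e$ is connected unicyclic with cycle length $c-1$, where the length-$2$ "cycle" collapses to a single edge when $c=3$, making $G/e$ a tree. The base case $c=3$ gives $N(G)=1+1=2$, and the inductive step gives $N(G)=1+(c-2)=c-1$.

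The main obstacle is purely the sign bookkeeping: checking that the alternating recurrence $\chi_G=\chi_{G\setminus e}-\chi_{G/e}$ becomes the additive recurrence $N(G)=N(G\setminus e)+N(G/e)$ exactly when $e$ lies on a cycle (so that all three graphs are connected and the parities align), and confirming that contracting a cycle edge lowers the cycle length by one, with the double edge collapsing to a single edge when $c=3$. Once this recurrence is in hand, Parts (2) and (3) are short inductions and Part (1) is immediate from the orientation interpretation.
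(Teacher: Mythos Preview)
Your proposal is correct. Part (1) is essentially identical to the paper's argument: the paper also constructs the orientation by ordering the vertices starting from $v$ and directing every edge toward the earlier endpoint.

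For Parts (2) and (3), however, you take a genuinely different route. The paper argues purely combinatorially at the level of acyclic orientations: for (2) it fixes $v$ on the cycle and observes that the two cycle edges at $v$ must point inward while the rest of the cycle can be oriented in at least two ways (and for a tree there is visibly a unique orientation with sink $v$); for (3) it again fixes $v$ on the cycle and counts the $(c-1)$ orientations of the cycle with unique sink $v$ (one for each choice of the source vertex), noting that each extends uniquely to the attached trees. Your approach instead passes through the chromatic polynomial: you invoke Whitney's sign theorem to turn the deletion--contraction recurrence $\chi_G=\chi_{G\setminus e}-\chi_{G/e}$ into the additive recurrence $N(G)=N(G\setminus e)+N(G/e)$ for $e$ on a cycle, and then do short inductions. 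Both arguments are valid; the paper's is more self-contained (no appeal to Whitney or deletion--contraction once the orientation interpretation of $c_n$ is in hand), while yours isolates a clean recurrence that would extend more mechanically to other graph families.
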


\begin{proof} In each case, we fix a vertex $v\in V$ and count acyclic orientations with unique sink $v$. 
\begin{enumerate}
\item If $G$ is not connected, then every acyclic orientation of $G$ will have at least one sink in each component of $G$, so $c_n=0$. Conversely, if $G$ is connected, then fix a vertex $v\in V$ and order the vertices of $G$ as follows. We start with $v$, and at each step, select a vertex adjacent to a previously chosen one. Because $G$ is connected, every vertex will be selected at some point. Then we define an acyclic orientation of $G$ by orienting each edge toward the vertex chosen earlier. This acyclic orientation will have unique sink $v$, so $c_n\geq 1$.
\item If $G$ is a tree and $v\in V$ is fixed, then the only acyclic orientation with unique sink $v$ is that where each edge is directed toward the vertex closer to $v$. Conversely, if $G$ is a connected graph with a cycle $C$, fix a vertex $v$ of the cycle and let $u$ and $w$ be its neighbours. Because $v$ is the unique sink, the edges $\{u,v\}$ and $\{v,w\}$ must be directed toward $v$. The other edges of $C$ can be directed as either a path from $u$ to $w$ or a path from $w$ to $u$. Therefore, there are at least two acyclic orientations of $C$ with unique sink $v$. These acyclic orientations of $C$ can be extended to those of $G$ by ordering the remaining vertices as in Part (1).
\item Fix a vertex $v$ of the cycle $C$ of $G$. There are exactly $(c-1)$ acyclic orientations of $C$ with unique sink $v$, given by choosing one of the other vertices $w$ of $C$ and directing the edges as paths from $w$ to $v$. These $(c-1)$ acyclic orientations each have a unique extension to $G$ by directing each edge toward the vertex closer to the cycle.
\end{enumerate}
\end{proof}

Cho and van Willigenburg described a general method for constructing bases of $\Lambda_n$ using chromatic symmetric functions.

\begin{theorem} \cite[Theorem 5]{chrombases}
Fix a family $\mathcal F=(G_1,G_2,\ldots)$ of connected graphs where $G_i$ has $i$ vertices for every $i$. For a partition $\lambda\vdash n$, let $G_\lambda$ be the disjoint union of $G_{\lambda_1},\ldots,G_{\lambda_\ell}$. Then the set $B_{\mathcal F}=\{X_{G_\lambda}: \ \lambda\vdash n\}$ is a basis of $\Lambda_n$. 
\end{theorem}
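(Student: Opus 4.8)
The plan is to show that the transition matrix expressing $B_{\mathcal F}$ in terms of the power sum symmetric functions $\{p_\lambda:\ \lambda\vdash n\}$ is triangular with nonzero diagonal entries. Since $|B_{\mathcal F}|=p(n)=\dim\Lambda_n$, it then suffices to prove that $B_{\mathcal F}$ is linearly independent, which follows from invertibility of this matrix. The two structural facts driving the argument are the multiplicativity of the chromatic symmetric function over disjoint unions, $X_{G_\lambda}=\prod_{k=1}^\ell X_{G_{\lambda_k}}$, and the multiplicativity of power sums, $p_\lambda=p_{(\lambda_1)}\cdots p_{(\lambda_\ell)}$ with $p_{(a)}p_{(b)}=p_{(a,b)}$.

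The key lemma is: for a connected graph $H$ on $i$ vertices, writing $X_H=\sum_{\mu\vdash i}r_\mu p_\mu$, the coefficient $r_{(i)}$ of the single-part power sum $p_{(i)}$ is nonzero. The cleanest route is the principal specialization $p_k\mapsto t$ (setting $x_1=\cdots=x_t=1$ and the rest to $0$), under which $X_H$ becomes the chromatic polynomial $\chi_H(t)=\sum_\mu r_\mu t^{\ell(\mu)}$. Since $(i)$ is the unique partition of length $1$, the coefficient of $t^1$ is exactly $r_{(i)}$, that is, $r_{(i)}=\chi_H'(0)$. For connected $H$ this is nonzero: by Theorem \ref{thm:chiacyclic}, $|\chi_H'(0)|$ counts the acyclic orientations of $H$ with a fixed unique sink, and the spanning-tree ordering argument from the proof of Corollary \ref{cor:coefn}(1) produces at least one. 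Alternatively, Stanley's power sum expansion $X_H=\sum_{S\subseteq E}(-1)^{|S|}p_{\lambda(S)}$ \cite{chromsym} identifies $r_{(i)}$ with $\sum_S(-1)^{|S|}$ over connected spanning subgraphs, whose nonvanishing is the Tutte evaluation $(-1)^{i-1}T_H(1,0)\neq 0$.

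With the lemma in hand, I would expand $X_{G_\lambda}=\prod_k\big(\sum_{\mu^{(k)}\vdash\lambda_k}r^{(k)}_{\mu^{(k)}}p_{\mu^{(k)}}\big)$, so that each monomial contributes $p_\nu$ where $\nu=\mu^{(1)}\cup\cdots\cup\mu^{(\ell)}$ is the partition formed by concatenating all the parts. Every such $\nu$ refines $\lambda$, and since $\ell(\nu)=\sum_k\ell(\mu^{(k)})\geq\ell$ with equality forcing each $\mu^{(k)}=(\lambda_k)$, the only term with $\nu=\lambda$ has coefficient $\prod_k r^{(k)}_{(\lambda_k)}=\prod_k\chi_{G_{\lambda_k}}'(0)\neq 0$. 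Hence $X_{G_\lambda}=\big(\prod_k\chi_{G_{\lambda_k}}'(0)\big)p_\lambda+(\text{strictly finer terms})$. Ordering the partitions of $n$ by a linear extension of the refinement order, the matrix $(M_{\lambda\nu})$ defined by $X_{G_\lambda}=\sum_\nu M_{\lambda\nu}p_\nu$ is triangular with nonzero diagonal, hence invertible, so $B_{\mathcal F}$ is a basis.

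The main obstacle is the key lemma, i.e. the nonvanishing of the $p_{(i)}$-coefficient for every connected $H$; once that is secured the triangularity bookkeeping is routine. I would present the principal-specialization argument, since it is short and ties directly into the acyclic-orientation material of Section \ref{sec:coefn}, relegating the equivalent Tutte-polynomial identity to a remark.
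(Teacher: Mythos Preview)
The paper does not give its own proof of this theorem; it is stated with a citation to \cite[Theorem~5]{chrombases} and used as a black box. Your proposed proof is correct and is essentially the standard argument from the cited Cho--van Willigenburg paper: expand each $X_{G_\lambda}$ in the power sum basis, observe that the only $p_\nu$ appearing are refinements of $\lambda$, and check that the diagonal entry $[p_\lambda]X_{G_\lambda}=\prod_k r^{(k)}_{(\lambda_k)}$ is nonzero because each factor is $\chi_{G_{\lambda_k}}'(0)$, which is nonzero for connected graphs. The appeal to Theorem~\ref{thm:chiacyclic} and the acyclic-orientation count for the nonvanishing is a nice touch and dovetails with Section~\ref{sec:coefn}, though in the original reference this step is handled by the broken-circuit theorem; either route works.
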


In particular, we can write every chromatic symmetric function $X_G$ as a rational linear combination of the $X_{G_\lambda}$. We could ask which $\mathcal F$ have the property that we can write every chromatic symmetric function $X_G$ as an \emph{integer} linear combination of the $X_{G_\lambda}$. Our interpretation of $c_n$ will give us a simple answer.

\begin{theorem}
Fix a family $\mathcal F=(G_1,G_2,\ldots)$ of connected graphs where $G_i$ has $i$ vertices for every $i$. Every chromatic symmetric function $X_G$ can be written as an integer linear combination of the $X_{G_\lambda}$ if and only if every $G_i$ is a tree. 
\end{theorem}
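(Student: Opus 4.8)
The plan is to reinterpret the statement as a question about an integer lattice and then reduce it to the single invariant $c_k(G_k)$. First I would observe that the $\mathbb{Z}$-span of all chromatic symmetric functions of $n$-vertex graphs is exactly the lattice $L_n=\bigoplus_{\mu\vdash n}\mathbb{Z}\,\mathfrak{st}_\mu$: the DNC relation expands every $X_G$ with integer coefficients, so the span lies in $L_n$, while each generator $\mathfrak{st}_\mu=X_{\text{St}_\mu}$ is itself such an $X_G$, giving the reverse inclusion. Since the Cho--van Willigenburg theorem says $\{X_{G_\lambda}:\lambda\vdash n\}$ is a $\mathbb{Q}$-basis of $\Lambda^n$, the assertion ``every $n$-vertex $X_G$ is an integer combination of the $X_{G_\lambda}$'' becomes the assertion that $\{X_{G_\lambda}\}$ is a $\mathbb{Z}$-basis of $L_n$, which holds if and only if the transition matrix $M^{(n)}$ from $\{X_{G_\lambda}\}$ to $\{\mathfrak{st}_\mu\}$ is invertible over $\mathbb{Z}$, i.e. $\det M^{(n)}=\pm 1$.

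The heart of the argument is to compute this determinant by exploiting that the star basis is multiplicative: since $\text{St}_\lambda$ is a disjoint union of stars and $X$ is multiplicative over disjoint unions, $\mathfrak{st}_\lambda=\mathfrak{st}_{\lambda_1}\cdots\mathfrak{st}_{\lambda_\ell}$, and likewise $X_{G_\lambda}=X_{G_{\lambda_1}}\cdots X_{G_{\lambda_\ell}}$. The key point is that the product $\mathfrak{st}_\mu\cdot\mathfrak{st}_\nu=\mathfrak{st}_{\mu\cup\nu}$ adds lengths. Each factor $X_{G_{\lambda_i}}$ is the chromatic symmetric function of a connected graph on $\lambda_i$ vertices, so its unique length-one star term is $c_{\lambda_i}(G_{\lambda_i})\,\mathfrak{st}_{\lambda_i}$ with $c_{\lambda_i}(G_{\lambda_i})\neq 0$ by Corollary \ref{cor:coefn}(1), and all of its other terms have length at least $2$. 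Multiplying out, every $\mathfrak{st}_\mu$ appearing in $X_{G_\lambda}$ has $\ell(\mu)\geq\ell(\lambda)$, and the only term of length exactly $\ell(\lambda)$ is $\mathfrak{st}_\lambda$, with coefficient $\prod_i c_{\lambda_i}(G_{\lambda_i})$. Ordering partitions by increasing length therefore makes $M^{(n)}$ block-triangular with diagonal blocks that are themselves diagonal, so that $\det M^{(n)}=\prod_{\lambda\vdash n}\prod_i c_{\lambda_i}(G_{\lambda_i})$.

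Finally I would read off the conclusion. Every $c_k(G_k)$ is a positive integer, since it counts acyclic orientations with a fixed unique sink, so the product above equals $\pm 1$ if and only if $c_k(G_k)=1$ for every $k$ that occurs as a part of some partition of $n$, namely all $1\leq k\leq n$. By Corollary \ref{cor:coefn}(2), $c_k(G_k)=1$ exactly when $G_k$ is a tree. Hence $\det M^{(n)}=\pm 1$ for every $n$ if and only if every $G_i$ is a tree, which is precisely the claim. For the ``only if'' direction one notes that if some $G_m$ is not a tree then $|\det M^{(m)}|\geq 2$, so the $\mathbb{Z}$-span of $\{X_{G_\lambda}:\lambda\vdash m\}$ is a proper sublattice of $L_m$ and some star forest $X_{\text{St}_\mu}$ fails to be an integer combination of the $X_{G_\lambda}$.

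I expect the main obstacle to be carefully justifying the length-grading triangularity: that lengths add under the multiplicative star basis and that the length-$\ell(\lambda)$ component of $X_{G_\lambda}$ is supported only on $\mathfrak{st}_\lambda$. Everything else---the lattice reformulation, the unit-determinant criterion for a $\mathbb{Z}$-basis, and the final tree characterization via $c_k(G_k)$---then follows directly from results already established in the excerpt.
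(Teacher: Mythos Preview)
Your proof is correct and the overall architecture is close to the paper's, but the triangularity you use is genuinely different. The paper treats the two directions separately: when every $G_i$ is a tree, it orders partitions lexicographically and argues that $X_{G_\mu}=\mathfrak{st}_\mu+\sum_{\lambda<_{\text{lex}}\mu}c_\lambda\mathfrak{st}_\lambda$, so the transition matrix is unitriangular and its inverse is integral; for the converse it picks the minimal $m$ with $G_m$ not a tree and exhibits $\mathfrak{st}_m$ itself as a chromatic symmetric function that fails to be an integer combination, since the coefficient of $X_{G_m}$ must be $1/c_m(G_m)$. Your length-grading argument is cleaner in that it handles both directions at once and yields the closed form $\det M^{(n)}=\prod_{\lambda\vdash n}\prod_i c_{\lambda_i}(G_{\lambda_i})$, from which the tree characterization is immediate; it also avoids the slightly delicate verification that products of refinements stay lex-smaller. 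What the paper's approach buys is an explicit witness in the non-tree case (namely $\mathfrak{st}_m$), whereas your determinant argument only shows that \emph{some} star forest fails without naming it.
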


\begin{proof} We first suppose that $\mathcal F$ is a family of trees. Let $G$ be an $n$-vertex graph. Recall that we can write $X_G$ as an integer combination in the star basis because the coefficient $c_\lambda$ of $\mathfrak{st}_\lambda$ is equal to $(-1)^{a_1}$ multiplied by the number of paths in a DNC tree from $G$ to $\text{St}_\lambda$. We now consider the change-of-basis between the star basis and the basis $B_{\mathcal F}$. By Corollary \ref{cor:coefn}, Part (2), the star expansion of $X_{G_i}$ is of the form
\begin{equation}
X_{G_i}=\mathfrak{st}_i+\sum_{\lambda\vdash i, \ \lambda\neq i}c_\lambda(G_i)\mathfrak{st}_\lambda
\end{equation}
for some coefficients $c_\lambda(G_i)\in\mathbb Z$. Therefore, for a partition $\mu\vdash n$, the star expansion of $X_\mu=X_{G_{\mu_1}}\cdots X_{G_{\mu_\ell}}$ is of the form
\begin{equation} \label{eq:order}
X_\mu=\mathfrak{st}_\mu+\sum_{\lambda\vdash n, \ \lambda<_{\text{lex}}\mu}c_\lambda\mathfrak{st}_\lambda.
\end{equation}
Let $M$ be the change-of-basis matrix from the basis $B_{\mathcal F}$ to the star basis, where we order the indexing partitions so that they decrease lexicographically. By Equation \eqref{eq:order}, $M$ has integer entries and is upper triangular with $1$'s on the main diagonal. Therefore, the inverse matrix $M^{-1}$ also has integer entries, so we can write any $\mathfrak{st}_\mu$ as an integer combination of the $X_\lambda$. Because $X_G$ can be written as an integer combination of the $\mathfrak{st}_\mu$, it can be written as an integer combination of the $X_\lambda$.

Now suppose that $\mathcal F$ is not a family of trees, and let $n$ be minimal such that $G_n$ has a cycle. By Corollary \ref{cor:coefn}, Part (2), we have \begin{equation}X_{G_n}=c_n(G_n)\mathfrak{st}_n+\sum_{\lambda\vdash n, \ \lambda\neq n}c_\lambda(G_n)\mathfrak{st}_\lambda\end{equation}
for some coefficients $c_\lambda(G_n)\in\mathbb Z$ and $c_n(G_n)\geq 2$. By Corollary \ref{cor:coefn}, Part (1), the star $\mathfrak{st}_n$ does not appear in the star expansion of any other $X_{G_\mu}$ for $\mu\neq n$. Therefore, in the expansion of $\mathfrak{st}_n$ in the basis $B_{\mathcal F}$, the coefficient of $G_n$ is $\frac 1{c_n(G_n)}\notin\mathbb Z$, so we cannot write $\mathfrak{st}_n$ as an integer combination of the $X_{G_\lambda}$. 
\end{proof}

This result suggests that if we seek combinatorial interpretations of the coefficients of chromatic symmetric functions $X_G$ in these chromatic bases $B_{\mathcal F}$, it may be best to take families of trees.

\section{Acknowledgments}
R. Orellana was partially supported by NSF grant DMS--2452044. This material is also based upon work
supported by the National Science Foundation, while R. Orellana was in residence
at the ICERM semester program ``Categorification and Computation in Algebraic Combinatorics" in Fall
2025.

\printbibliography

@article{chromtreepolyinv,
	Author = {Aliste-Prieto, J. and Martin, J. and Wagner, J. and Zamora, J.},
	DOI = {https://doi.org/10.1112/blms.13144},
	Eprint = {2402.10333},
	Eprinttype = {arXiv},
	Title = {Chromatic symmetric functions and polynomial invariants of trees},
	Year = {2024}}

@article{markedgraphs,
	Author = {Aliste-Prieto, J. and de Mier, A. and Orellana, R. and Zamora, J.},
	DOI = {https://doi.org/10.1137/22m148046x},
	Journal = {SIAM J. Discrete Math.},
	Page = {1881--1919},
	Title = {Marked graphs and the chromatic symmetric function},
	Volume = {37},
	Year = {2023}}

@article{chromunicyclic,
	Author = {Bingham, A. and Johnston, L. and Lawson, C. and Orellana, R. and Pan, J. and Sato, C.},
	Eprint = {2505.06486},
	Eprinttype = {arXiv},
	Title = {The chromatic symmetric function for unicyclic graphs},
	Year = {2025}}

@article{chrombases,
	Author = {Cho, S. and van Willigenburg, S.},
	Journal = {Electron. J. Combin.},
	Number = {23},
	Title = {Chromatic bases for symmetric functions},
	Year = {2016}}

@phdthesis{vertweightgen,
	Author = {Crew, L.},
	School = {University of Pennsylvania},
	Title = {Vertex-weighted generalizations of chromatic symmetric functions},
	Year = {2020}}

@article{chromstar,
	Author = {Gonzalez, M. and Orellana, R. and Tomba, M.},
	Eprint = {2404.06002},
	Eprinttype = {arXiv},
	Title = {The chromatic symmetric function in the star-basis},
	Year = {2024}}

@article{interpwhitney,
	Author = {Greene, C. and Zaslavsky, T.},
	Journal = {Trans. Am. Math. Soc.},
	Number = {1},
	Page = {97--126},
	Title = {On the interpretation of Whitney numbers through arrangements of hyperplanes, zonotopes, non-Radon partitions, and orientations of graphs},
	Volume = {280},
	Year = {1983}}

@article{algochromtrees,
	Author = {Heil, S. and Ji, C.},
	Journal = {Australas. J. Comb.},
	Page = {210--222},
	Title = {On an algorithm for comparing the chromatic symmetric functions of trees},
	Volume = {75},
	Year = {2019}}

@article{gendegpol,
	Author = {Liu, R. and Tang, M.},
	Eprint = {2411.18972},
	Eprinttype = {arXiv},
	Title = {Generalized degree polynomials of trees},
	Year = {2024}}

@article{catchromiso,
	Author = {Loebl, M. and Sereni, J-S.},
	DOI = {https://doi.org/10.4171/aihpd/74},
	Journal = {Ann. Henri Poincare},
	Number = {6.3},
	Page = {357--384},
	Title = {Isomorphism of weighted trees and Stanley's conjecture for caterpillars},
	Year = {2019}}

@article{distinguishingtrees,
	Author = {Martin, J. and Morin, M. and Wanger, J.},
	DOI = {https://doi.org/10.1016/j.jcta.2007.05.008},
	Journal = {J. Comb. Theory},
	Page = {237--253},
	Title = {On distinguishing trees by their chromatic symmetric functions},
	Volume = {115},
	Year = {2008}}

@article{chromsym,
	Author = {Stanley, R.},
	Journal = {Adv. Math.},
	Page = {166--194},
	Title = {A symmetric function generalization of the chromatic polynomial},
	Volume = {111},
	Year = {1995}}

@article{acyclic,
	Author = {Stanley, R.},
	Journal = {Disc. Math.},
	Page = {905--909},
	Title = {Acyclic orientations of graphs},
	Volume = {306},
	Year = {2006}}

@article{cutchrom,
	Author = {Whitehead, E., and Zhao, L.-C.},
	Journal = {Journal of Graph Theory},
	Number = {8},
	Page = {371--377},
	Title = {Cutpoints and the chromatic polynomial},
	Year = {2006}}

@phdthesis{conparts,
	Author = {Wolfgang, H.},
	School = {Massachusetts Institute of Technology},
	Title = {Two interactions between combinatorics and representation theory: Monomial immanants and Hochschild cohomology},
	Year = {1997}}

\end{document}